\newcommand\ba{\begin{align*}}
\newcommand\ea{\end{align*}}
\newcommand\be{\begin{enumerate}}
\newcommand\ee{\end{enumerate}}
\newcommand\bp{\begin{proof}}
\newcommand\ep{\end{proof}}
\newcommand\bpp{\begin{prop}}
\newcommand\epp{\end{prop}}
\newcommand\bpb{\begin{prob}}
\newcommand\epb{\end{prob}}
\newcommand\bd{\begin{defn}}
\newcommand\ed{\end{defn}}
\newcommand\bh{\begin{hint}}
\newcommand\eh{\end{hint}}
\newcommand\mS{\mathcal{S}}
\newcommand\bN{\mathbb{N}}
\newcommand\bR{\mathbb{R}}
\newcommand\bQ{\mathbb{Q}}
\newcommand\bZ{\mathbb{Z}}
\newcommand\CC{\mathcal{C}}
\newcommand\GG{\mathcal{G}}
\newcommand\KK{\mathcal{K}}
\newcommand\Hom{\operatorname{Hom}}
\newcommand\supp{\operatorname{supp}}
\newcommand\lk{\operatorname{Lk}}
\newcommand\st{\operatorname{St}}
\newcommand\gam{\Gamma}
\newcommand\Mod{\operatorname{Mod}}
\DeclareMathOperator\Out{Out}
\DeclareMathOperator\Aut{Aut}
\def\thetitle{{Geometry and combinatorics via right-angled Artin groups}}
\def\theauthors{{Thomas Koberda}}
\theoremstyle{plain}
\newtheorem{thm}{Theorem}[section]
\newtheorem{lem}[thm]{Lemma}
\newtheorem{cor}[thm]{Corollary}
\newtheorem{prop}[thm]{Proposition}
\newtheorem{que}[thm]{Question}
\newtheorem*{principle*}{Principle}
\newtheorem*{claim*}{Claim}
\theoremstyle{remark}
\theoremstyle{definition}
\newtheorem{defn}[thm]{Definition}
\newtheorem{prob}{Problem}[section]
\begin{document}
\title\thetitle
\date{\today}
\keywords{right-angled Artin group, extension graph, graph expanders, Hamiltonian graph, $k$-colorability,
graph automorphism, acylindrical group action,
quasi-isometry, commensurability, mapping class group, curve graph}
\subjclass[2020]{Primary: 20F36, 20F65, 05C50; Secondary: 05C45, 05C48, 05C60, 68Q15, 03D15}

\author[T. Koberda]{Thomas Koberda}
\address{Department of Mathematics, University of Virginia, Charlottesville, VA 22904-4137, USA}
\email{thomas.koberda@gmail.com}
\urladdr{https://sites.google.com/view/koberdat}

\begin{abstract}
We survey the relationship between the combinatorics and geometry of graphs and the algebraic structure of right-angled Artin groups.
We concentrate on the defining graph of the right-angled Artin group and on the extension graph associated to the right-angled Artin
group. Additionally, we discuss connections to geometric group theory and complexity theory. The final version of this survey will
appear in ``In the tradition of Thurston, vol.~II", ed.~K.~Ohshika and A.~Papadopoulos.
\end{abstract}

\maketitle
\tableofcontents

\section{Introduction}
In this paper, we survey the interplay between the algebraic structure of right-angled Artin groups, the combinatorics of graphs,
and geometry. Throughout the paper, let $\gam$ be a finite simplicial graph, and we write $V(\gam)$ and $E(\gam)$ for the set
of vertices and edges of $\gam$, respectively. The \emph{right-angled Artin group}\index{right-angled Artin group}
on $\gam$, denoted by $A(\gam)$, is the group
defined by \[A(\gam)=\langle V(\gam)\mid [v,w]=1\textrm{ if and only if } \{v,w\}\in E(\gam)\rangle.\] 

\subsection{Scope of this survey}
Right-angled Artin groups interpolate between free groups and abelian groups, and they exhibit a wide range of complex phenomena.
Moreover, they are simple enough that their structure is relatively tractable, and hence one can come to understand these groups
fairly well. They are prototypical examples of CAT(0) groups, and they serve as toy examples that mirror many important properties of
and inform conjectures about more complicated groups, such as mapping class groups. Some 
well-known and difficult conjectures about mapping class groups,
such as the characterization of convex cocompact subgroups, admit complete, tractable analogues in the case of right-angled Artin
groups; see~\cite{KMT2014} for a detailed discussion.

In this article, we will concentrate on some specific aspects of right-angled Artin groups, which we will outline in the remainder of this
section. For a survey of the general properties of
right-angled Artin groups, the reader is directed to~\cite{Charney2007}.

Some of the basic questions we will discuss are as follows.

\begin{que}\label{q:comb-intro}
What is the exact relationship between the group theoretic structure of the group $A(\gam)$ and the combinatorial structure of $\gam$?
\end{que}

The reader will find that there are two answers to Question~\ref{q:comb-intro}, the trivial one and the nontrivial one. The trivial one
will be a consequence of Theorem~\ref{thm:coho-alg} below, which shows that $\gam$ is completely determined by the cohomology
algebra of $A(\gam)$, and in fact by the degree one and two parts together with the cup product pairing. Thus, one can in principle
recover $\gam$ from $A(\gam)$, so that any combinatorial properties of $\gam$ is automatically determined by the algebraic structure
of $A(\gam)$. Conversely, the algebraic structure of $A(\gam)$ is, in a sense that is
so general as to render it almost meaningless, ``known" by the graph $\gam$.

There is a more interesting approach to Question~\ref{q:comb-intro} that seeks to find a dictionary between the combinatorics of $\gam$
and the algebra of $A(\gam)$, by passing between specific graph-theoretic and group-theoretic properties that are analogous. This
line of inquiry yields some otherwise nonobvious
insights that have applications outside of geometric group theory, such as in cryptography and
complexity theory. Some sample results we will discuss in the sequel are the following:

\begin{thm}[\cite{FKK2019}]\label{thm:aut-intro}
Let $\gam$ be a finite simplicial graph. Then $\gam$ admits a nontrivial automorphism if and only if the outer automorphism group
$\Out(A(\gam))$ contains a finite nonabelian group.
\end{thm}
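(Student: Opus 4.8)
The plan is to prove the two implications separately, the forward one by an explicit construction and the reverse one (in contrapositive form) via the action of $\Out(A(\gam))$ on first homology.

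For the forward implication, suppose $\gam$ admits a nontrivial automorphism $\sigma$, and pick distinct vertices $v,w$ with $\sigma(v)=w$. First I would consider, inside $\Aut(A(\gam))$, the automorphism induced by $\sigma$ together with the inversion $\iota_v$ sending $v\mapsto v^{-1}$ and fixing every other generator. A one-line computation gives $\sigma\iota_v\sigma^{-1}=\iota_w$, which is distinct from $\iota_v$ since $v\neq w$; hence $\langle\sigma,\iota_v\rangle$ is nonabelian, and it is finite because it is contained in the finite subgroup $\Aut(\gam)\ltimes(\bZ/2)^{V(\gam)}$ generated by all graph automorphisms and all inversions. To land in $\Out(A(\gam))$ I would then observe that the homology representation $\Aut(A(\gam))\to\mathrm{GL}(H_1(A(\gam);\bZ))$ kills $\Inn(A(\gam))$ and carries $\Aut(\gam)\ltimes(\bZ/2)^{V(\gam)}$ isomorphically onto the group of signed permutation matrices; in particular it is injective on this subgroup and factors through $\Out(A(\gam))$, so $\langle\sigma,\iota_v\rangle$ maps to a finite nonabelian subgroup of $\Out(A(\gam))$.

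For the reverse implication I would argue the contrapositive: assuming $\gam$ has no nontrivial automorphism, show every finite subgroup of $\Out(A(\gam))$ is abelian. The first point is that rigidity makes the domination preorder on $V(\gam)$ — with $v\preceq w$ meaning $\lk(v)\subseteq\st(w)$ — a genuine partial order, since $v\preceq w\preceq v$ with $v\neq w$ would force $v$ and $w$ to have the same link away from each other, so that the transposition $(v\,w)$ would be a nontrivial automorphism of $\gam$. Next, by the Laurence--Servatius generation theorem and the absence of graph automorphisms, $\Out(A(\gam))$ is generated by the images of the inversions, the domination transvections $\tau_{v,w}\colon v\mapsto vw$ (with $v\prec w$), and the partial conjugations. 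Under the homology representation $\rho\colon\Out(A(\gam))\to\mathrm{GL}_n(\bZ)$, $n=|V(\gam)|$, the partial conjugations lie in $\ker\rho$, the inversions map to diagonal sign matrices spanning a subgroup $D\cong(\bZ/2)^n$, and each $\tau_{v,w}$ maps to an elementary matrix $I+E_{wv}$ with $v\prec w$. Fixing a linear extension of the partial order and ordering the basis accordingly, these elementary matrices are all lower unitriangular, hence generate a torsion-free nilpotent group $U$ normalized by $D$; so $\rho(\Out(A(\gam)))=U\rtimes D$, and since $U$ is torsion-free every finite subgroup of $U\rtimes D$ injects into $D$ and is abelian.

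The remaining, and I expect hardest, step is to transfer this back from $\rho(\Out(A(\gam)))$ to $\Out(A(\gam))$ itself, for which one needs that $\ker\rho$ — the integral Torelli subgroup of $\Out(A(\gam))$ — is torsion-free: then any finite $H\le\Out(A(\gam))$ meets $\ker\rho$ trivially, so $\rho|_H$ is injective and $H$ embeds in $U\rtimes D$. Torsion-freeness of the Torelli subgroup ought to follow, as for $\Out(F_n)$, from a Nielsen-realization statement for finite subgroups of $\Out(A(\gam))$ combined with an Euler-characteristic-and-transfer argument showing a nontrivial finite-order symmetry cannot act trivially on $H_1(\,\cdot\,;\bZ)$; carrying this out for right-angled Artin groups (working with the Salvetti complex or a blow-up in place of a graph) is the technical heart of the matter and is presumably where the cited work concentrates its effort. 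It would also be worth double-checking the homology bookkeeping — in particular that antisymmetry of the domination order is precisely what prevents $\tau_{v,w}$ and $\tau_{w,v}$ from both occurring, which is what produces the unitriangular structure.
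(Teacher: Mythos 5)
Your proposal follows essentially the same route as the paper on both directions. The forward direction is the one the paper sketches in the paragraph preceding the theorem: a nontrivial graph automorphism $\sigma$ with $\sigma(v)=w\neq v$, together with the inversion $\iota_v$, generates a finite nonabelian subgroup inside $\Aut(\gam)\ltimes(\bZ/2\bZ)^{|V(\gam)|}$, and the homology representation certifies that this subgroup maps injectively to $\Out(A(\gam))$. The reverse direction is likewise the paper's: rigidity of $\gam$ forces the domination relation to be acyclic, so a compatible ordering of the vertices triangularizes the images of transvections in $\mathrm{GL}_n(\bZ)$, inversions give the diagonal $(\bZ/2)^n$, partial conjugations die on $H_1$, and a finite subgroup of an integral (uni)triangular-by-diagonal group must be conjugate into the diagonal and hence abelian. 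Whether you use upper or lower unitriangular matrices is of course immaterial.

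The one place where your plan diverges from the paper is exactly the step you flagged as the technical heart: that the kernel of the homology representation on $\Out(A(\gam))$ (equivalently, that finite-order automorphisms of $A(\gam)$ act nontrivially on $H_1(A(\gam);\bZ)$) is torsion-free. The paper does not approach this through Nielsen realization and transfer; it cites a theorem of Toinet, which is obtained from conjugacy $p$-separability of right-angled Artin groups and the resulting residual (torsion-free nilpotent) structure of the $\mathrm{IA}$-subgroup of $\Aut(A(\gam))$ --- the same circle of ideas as Baumslag--Taylor's proof that $\mathrm{IA}(F_n)$ is torsion-free. Nielsen realization for $\Out(A(\gam))$ is a substantially harder statement and is not what is invoked here, so while you correctly located the crux, the mechanism you guessed for filling it is not the one that is actually used. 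Everything else in your argument is sound and matches the paper's reasoning.
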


\begin{thm}[\cite{FKK2020a}]\label{thm:color-intro}
Let $\gam$ be a finite simplicial graph with $n$ vertices. Then $\gam$ admits a $k$-coloring if and only if $A(\gam)$ surjects to a product
\[F_{n_1}\times\cdots \times F_{n_k},\] where $F_{n_i}$ is a free group of rank $n_i$, and \[\sum_{i=1}^k n_i=n.\]
\end{thm}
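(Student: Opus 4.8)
The plan is to prove the two implications separately; the forward one is elementary. A proper $k$-coloring of $\gam$ is exactly an ordered partition $V(\gam)=V_1\sqcup\cdots\sqcup V_k$ into (possibly empty) independent sets. With $n_i=\abs{V_i}$, so $\sum_{i=1}^k n_i=n$, and $F_{n_i}$ the free group on the set $V_i$, I would send each vertex $v$ to $v$ inside the factor indexed by its color and to $1$ in every other factor. This respects the relations $[v,w]=1$ of $A(\gam)$, since adjacent vertices receive different colors and hence land in distinct direct factors, and it is surjective because every free generator of every factor is the image of a vertex; this is the required surjection.

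For the reverse direction, suppose $\phi\colon A(\gam)\twoheadrightarrow P:=F_{n_1}\times\cdots\times F_{n_k}$ with $\sum_i n_i=n$. I would first pass to rational cohomology. Surjectivity of $\phi$ makes $\phi^*\colon H^1(P;\bQ)\to H^1(A(\gam);\bQ)$ injective, and both sides have dimension $n$ — the target because $H^1(A(\gam);\bQ)$ has the vertices of $\gam$ as a basis, the source because $H^1(P;\bQ)=\bigoplus_i H^1(F_{n_i};\bQ)$ by Künneth and $\sum_i n_i=n$ — so $\phi^*$ is an isomorphism. The $i$-th summand of $H^1(P;\bQ)$ is isotropic for the cup product of $P$, since the cup product of two of its elements lies in $H^2(F_{n_i};\bQ)=0$ ($F_{n_i}$ being free). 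Transporting along $\phi^*$ and using naturality of cup products, I obtain a decomposition $H^1(A(\gam);\bQ)=W_1\oplus\cdots\oplus W_k$ with $\dim W_i=n_i$ and each $W_i$ isotropic for the cup product $H^1\times H^1\to H^2$ of $A(\gam)$.

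The combinatorial heart is to manufacture a $k$-coloring from such a decomposition, and here I would use the explicit low-degree cohomology ring of $A(\gam)$ (Theorem~\ref{thm:coho-alg}): $H^1$ has basis $\{v^*\}_{v\in V(\gam)}$, $H^2$ has basis indexed by $E(\gam)$, and $v^*\cup w^*$ is, up to sign, the generator of $\{v,w\}$ when $\{v,w\}\in E(\gam)$ and is $0$ otherwise. Thus a subspace $W\subseteq H^1(A(\gam);\bQ)$ is isotropic exactly when, for every edge $\{v,w\}$, the coordinate projection $W\to\bQ^2$ onto the $v$- and $w$-coordinates has rank at most $1$. Letting $T_W$ be the support of $W$ (vertices $v$ with some element of $W$ having nonzero $v$-coordinate), chaining these rank bounds along paths shows that for each connected component $C$ of $\gam[T_W]$ the coordinate map $W\to\bQ^C$ has rank one. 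Applying this to each $W_i$, set $T_i=T_{W_i}$; since the $W_i$ span everything, $\bigcup_i T_i=V(\gam)$. Form the bipartite graph with parts $V(\gam)$ and $\{(i,C):C\text{ a component of }\gam[T_i]\}$, joining $v$ to $(i,C)$ when $v\in C$. A matching saturating $V(\gam)$ yields a proper $k$-coloring: color each vertex by the first coordinate of its matched pair; a monochromatic edge $\{v,w\}$ of color $i$ would put $v$ and $w$ in a common component of $\gam[T_i]$ and hence match them to the same pair, contradicting injectivity. Hall's condition is precisely what the cohomology supplies: for $S\subseteq V(\gam)$ the restriction $H^1(A(\gam);\bQ)\to\bQ^S$ is onto, so $\sum_i\operatorname{rank}(W_i\to\bQ^S)\ge\abs{S}$; but the image of $W_i$ in $\bQ^S$ is supported on $S\cap T_i$ and splits along the components of $\gam[T_i]$, on each of which the rank is at most $1$, so $\operatorname{rank}(W_i\to\bQ^S)$ is at most the number of components of $\gam[T_i]$ meeting $S$, and summing over $i$ bounds $\abs{S}$ by the size of the neighborhood of $S$.

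I expect the genuine obstacle to be this last step: recognizing a direct-sum decomposition of $H^1$ into cup-product-isotropic subspaces as the cohomological signature of a $k$-coloring, and extracting the coloring as a system of distinct representatives via Hall's theorem. Once the rank-one behavior on the components of $\gam[T_W]$ is established, the verification of Hall's inequality (from additivity of rank over a direct sum) and the check that the matching is a proper coloring are routine, as are the forward direction and the cohomological bookkeeping in the first part.
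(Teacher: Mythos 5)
Your argument is correct, and it takes a genuinely different route from the one sketched in the paper (for Theorem~\ref{thm:color-body}). The paper works with first rational \emph{homology}: it expresses the induced isomorphism $H_1(A(\gam);\bQ)\to H_1(A(\Lambda);\bQ)$ as an $n\times n$ matrix $A=(A_1\mid\cdots\mid A_k)$ with columns blocked by the free factors, invokes as an ``exercise in linear algebra'' the existence of a row permutation making all $k$ diagonal blocks invertible, defines the color of a vertex by which diagonal block its row lands in, and checks propriety using the group-theoretic fact that commuting elements of $F_{n_1}\times\cdots\times F_{n_k}$ have rationally proportional components. You instead dualize to \emph{cohomology}: pull back the decomposition $H^1(P;\bQ)=\bigoplus_i H^1(F_{n_i};\bQ)$ along $\phi^*$ to obtain cup-product-isotropic summands $W_i$ of $H^1(A(\gam);\bQ)$, translate isotropy via Proposition~\ref{prop:coho} (this is the relevant reference, rather than Theorem~\ref{thm:coho-alg}) into the rank-$\le 1$ condition on edge-coordinate projections, chain it to get rank exactly $1$ on each component of $\gam[T_i]$, and then extract the coloring as a system of distinct representatives via Hall's theorem. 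The two proofs hinge on the same linear-algebraic phenomenon — the paper's block-diagonalization lemma is a Rado/Hall-type statement in disguise — but your version makes the transversal structure explicit, uses the graph structure of $\gam$ directly in verifying Hall's condition (through the components of $\gam[T_i]$), and avoids the group-theoretic lemma about commuting tuples in products of free groups by getting the same information from $H^2(F_{n_i};\bQ)=0$. Both proofs, as you note, only need the map to be surjective on $H_1(\cdot;\bQ)$, recovering the remark the paper makes after its proof; your cohomological framing makes this especially transparent.
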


\begin{thm}[\cite{FKK2021ham}]\label{thm:ham-intro}
Let $\gam$ be a finite simplicial graph. Then $\gam$ admits a Hamiltonian cycle if and only if the cohomology algebra of $A(\gam)$
is a Hamiltonian vector space.
\end{thm}

In Theorem~\ref{thm:ham-intro}, \emph{Hamiltonicity} of a vector space means that there is a bilinear form satisfying certain ``connectivity"
conditions. We direct the reader to Subsection~\ref{ss:ham} for precise definitions.

Whereas the graph $\gam$ is evidently intimately related to the structure of $A(\gam)$, the graph $\gam$ is not always ideally suited for
the study of the internal structure of $A(\gam)$, since there is no natural interesting action of $A(\gam)$ on $\gam$. However, one can
augment $\gam$ in the ``smallest way possible" in order to get a graph on which $A(\gam)$ acts. The key idea is to conflate a vertex
$v$ of $\gam$ with an element of $A(\gam)$. One can then consider the set
\[V(\gam^e)=\{v^g\mid v\in V(\gam), g\in A(\gam)\}\subset A(\gam)\]
of all conjugates of vertices of $\gam$, where here we write $v^g=g^{-1}vg$.
It is true though largely irrelevant that $V(\gam^e)$ is not canonically defined as a subset of $A(\gam)$, since automorphisms of $A(\gam)$
need not preserve the set of conjugates of given vertex generators of $A(\gam)$.

We build a graph $\gam^e$, called the \emph{extension graph}\index{extension graph} of $\gam$
(cf.~\cite{KK2013}), by putting an edge $\{v^g,w^h\}$ between vertices in
$V(\gam^e)$ whenever $[v^g,w^h]=1$ in $A(\gam)$. The group $A(\gam)$ now acts in a canonical way on $\gam^e$, i.e.~by conjugation.

\begin{que}
What is the relationship between the structure of $A(\gam)$ and the structure of $\gam^e$? What is the geometry of the action of $A(\gam)$
on $\gam^e$?
\end{que}

The graph $\gam^e$, though algebraically defined, is very closely related to Hagen's \emph{contact graph}\index{contact graph}
~\cite{Hagen14}, which encodes the intersection
pattern between hyperplanes in a natural CAT(0) cube complex on which $A(\gam)$ acts. This, together with an analogy between
the extension graph and the curve graph associated to a hyperbolic surface of finite type, is an entry point into the theory of hierarchically
hyperbolic spaces (HHSs) and hierarchically hyperbolic groups (HHGs)
(see~\cite{HHS1,HHS2}, for instance). We will largely avoid discussing that aspect of the theory in this
paper.

The extension graph carries a large amount of data about the subgroup structure of $A(\gam)$.
A sample result we will discuss is the following:

\begin{thm}[\cite{KK2013}]\label{thm:subgroup-intro}
Suppose $\gam$ has no triangles, and let $\Lambda$ be an arbitrary finite simplicial graph. Then $A(\Lambda)$ occurs as a subgroup
of $A(\gam)$ if and only if $\Lambda$ occurs as a subgraph of $\gam^e$.
\end{thm}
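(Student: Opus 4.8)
The plan is to prove the two implications separately; only the implication ``$A(\Lambda)\le A(\gam)\Rightarrow\Lambda\le\gam^e$'' will use the triangle-free hypothesis. I use freely that $\gam^e$ is triangle-free whenever $\gam$ is, since a triangle in $\gam^e$ is a copy of $\bZ^3$ in $A(\gam)$, and throughout ``subgraph'' means full (induced) subgraph.

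\emph{The easy direction, $\Lambda\le\gam^e\Rightarrow A(\Lambda)\le A(\gam)$} (no triangle-freeness needed). Since $V(\gam^e)\subset A(\gam)$ literally consists of conjugates of vertex generators, a finite full subgraph $\Lambda\le\gam^e$ with vertices $v_1^{g_1},\dots,v_n^{g_n}$ carries a tautological homomorphism $\iota\colon A(\Lambda)\to A(\gam)$ sending the $i$-th generator to $v_i^{g_i}$; it is well defined because adjacent vertices of $\Lambda$ commute in $A(\gam)$, and the whole content is that $\iota$ is injective. I would prove this using the action of $A(\gam)$ on the $\catz$ cube complex $X$ that is the universal cover of its Salvetti complex: the vertices of $\gam^e$ correspond to hyperplanes of $X$, adjacency corresponds to crossing of hyperplanes, and a disc-diagram (van Kampen) argument shows that a word in the $v_i^{g_i}$ trivial in $A(\gam)$ bounds a diagram in $X$ whose hyperplane pattern, being governed by the crossing graph $\Lambda$, forces the word to be a consequence of the commutations recorded in $A(\Lambda)$; hence it is already trivial there. (Equivalently one can induct on $\sum_i\ell(g_i)$ using the splittings $A(\gam)=A(\gam\setminus s)\ast_{A(\lk s)}A(\st s)$ over star subgroups and Bass--Serre theory, the base case $\sum_i\ell(g_i)=0$ being the standard inclusion of a full-subgraph RAAG via a retraction.)

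\emph{The hard direction, $A(\Lambda)\le A(\gam)\Rightarrow\Lambda\le\gam^e$.} Fix an embedding $\phi\colon A(\Lambda)\hookrightarrow A(\gam)$ with $\gam$ triangle-free; then $\Lambda$ is triangle-free too. The engine is the structure of centralizers and of $\bZ^2$'s in a triangle-free RAAG (Servatius): since $\gam$ has no triangles, every join parabolic $A(\Delta)$ with $\Delta$ a nontrivial join is $\cong F_m\times F_n$ for a complete bipartite graph $K_{m,n}$, every $\bZ^2$ lies in a conjugate of such a subgroup, and for a non-power $z\ne1$ whose cyclically reduced support is $\Delta$ the centralizer $\langle z\rangle\times A(\Delta^{\perp})$ is non-abelian only if $\Delta$ is an independent set of $\gam$, and then precisely when the vertices of $\Delta$ have two common neighbours that are themselves non-adjacent. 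From this I would build a map $\psi\colon V(\Lambda)\to V(\gam^e)$. A vertex $x$ of valence $\ge2$ in $\Lambda$ has two non-adjacent neighbours, whose $\phi$-images form a free pair centralizing $\phi(x)$, so $\phi(x)$ has non-abelian centralizer; by the classification this places $\phi(x)$, up to conjugacy and roots, inside an explicit complete-bipartite subgraph of $\gam^e$, and I would let $\psi(x)$ be the vertex-conjugate of that subgraph whose $\gam^e$-neighbourhood matches the adjacencies of $x$ in $\Lambda$. Vertices of valence $\le1$ --- isolated vertices, and the free endpoints of pendant edges --- are placed afterwards: using that $\gam^e$ is infinite whenever $\gam$ is not complete (a case handled directly, since then $\gam$ triangle-free forces $|V(\gam)|\le2$), that its links are infinite anticliques, and that $A(\gam)$ moves any vertex-conjugate to infinitely many pairwise non-commuting conjugates, one sends these to fresh vertex-conjugates avoiding every adjacency created so far. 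Finally one checks $\psi$ is injective and an isomorphism onto its image in $\gam^e$: ``$x\sim y\Rightarrow\psi(x)\sim\psi(y)$'' follows from the commutation analysis, and at valence-$\ge2$ vertices the reverse is immediate, since $[\psi(x),\psi(y)]=1$ would give $[\phi(x),\phi(y)]=1$, contradicting that non-adjacent $x,y$ generate a free pair in $A(\Lambda)$.

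\emph{Main obstacle.} The crux is the reconstruction in the hard direction, and two difficulties stand out. First, an element of a triangle-free RAAG can have non-abelian centralizer without being a vertex-conjugate --- e.g.\ $cd\in A(C_4)=F_{\{c,d\}}\times F_{\{a,b\}}$ --- so $\psi(x)$ cannot simply be a root of $\phi(x)$; instead one must extract the purely combinatorial data that $\phi$ imposes on the neighbourhood of $x$ (its valence, its adjacencies, the ambient complete-bipartite subgraph) and realize that data inside $\gam^e$, doing so uniformly over $V(\Lambda)$ and compatibly along every edge. Second, the placement of the low-valence vertices must not create any spurious edge of $\gam^e$; controlling this is exactly where one leans on triangle-freeness of $\gam$ (hence of $\gam^e$) and on the fact that links in $\gam^e$ are anticliques, which keeps the adjacency bookkeeping finite and tractable. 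By contrast, once organised the easy direction is routine.
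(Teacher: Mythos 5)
The easy direction as you have stated it is false, and the counterexample is worth internalizing. Take $\gam$ to be two isolated vertices $a,b$, so $A(\gam)=F_2=\langle a,b\rangle$, and take $\Lambda\subset\gam^e$ to be the full subgraph on $\{a,\,a^b,\,b\}$. These three conjugates are pairwise non-commuting in $F_2$, so $\Lambda$ is edgeless and $A(\Lambda)\cong F_3$. Your ``tautological'' map $\iota\colon A(\Lambda)\to A(\gam)$ sends the three generators to $a,\,a^b=b^{-1}ab,\,b$, whose image is $\langle a,b\rangle=F_2$; thus $\iota$ surjects $F_3$ onto $F_2$ and cannot be injective. Concretely, the word $z^{-1}xzy^{-1}$ lies in the kernel but is nontrivial in $F_3$. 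So the ``whole content'' is not that $\iota$ is injective --- it generally isn't --- and the disc-diagram/Bass--Serre arguments you sketch are attempting to prove a false statement. The correct statement, as the paper emphasizes, is that the map sending the $i$-th generator to a \emph{sufficiently high power} $(v_i^{g_i})^N$ is injective; the proof of this in \cite{KK2013} routes through the embedding of $\gam^e$ into a curve graph and the author's theorem on powers of Dehn twists from \cite{Koberda2012}, and there is no way to dispense with the exponent.

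For the hard direction, your proposal is genuinely different from the paper's argument. The paper proves a general result (Theorem \ref{thm:gex3}): if $A(\Lambda)\le A(\gam)$, then $\Lambda$ embeds in the \emph{clique graph} of $\gam^e$. This is done by embedding $A(\gam)$ into a mapping class group via high powers of Dehn twists, analyzing the image of $A(\Lambda)$ there, and producing ``partial'' pseudo-Anosov mapping classes; the triangle-free case (Theorem \ref{thm:gex2}) is then deduced as a corollary. You instead propose a purely combinatorial reconstruction via Servatius' centralizer theorem. This is an attractive idea in the triangle-free setting, and you have correctly pinpointed the crux: an element with non-abelian centralizer need not be (a power of) a vertex conjugate, as your example $cd\in A(C_4)$ shows. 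But the sketch stops exactly there --- ``one must extract the purely combinatorial data and realize it inside $\gam^e$'' is the entire theorem, not a step. In particular you give no mechanism for (i) choosing $\psi(x)$ among the many vertex-conjugates compatible with the centralizer data, (ii) making these choices coherent along edges and across overlapping join parabolics, or (iii) ruling out spurious edges among the images of low-valence vertices beyond an appeal to ``fresh'' conjugates. As written, the hard direction is a plan with the hard part left open, so I cannot credit it as a proof; if you want to pursue this route you should first attempt it for $\gam=C_4$ and $\Lambda=C_5$ (note $C_5\le C_4^e$), where the difficulties already appear in miniature.
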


The action of $A(\gam)$ on $\gam^e$ by conjugation, though perhaps simple at first glance, serves to unify the group theory of $A(\gam)$,
the geometry of $\gam^e$, and the intrinsic CAT(0) geometry of $A(\gam)$. We state the following result that we will discuss in some detail,
and we will defer definitions of the terminology until then.

\begin{thm}[\cite{KK2013b}]\label{thm:acyl-intro}
The action of $A(\gam)$ on $\gam^e$ is acylindrical. Moreover, the following are equivalent.
\begin{enumerate}
\item
The element $g\in A(\gam)$ acts loxodromically on $\gam^e$.
\item
The element $g\in A(\gam)$ acts as a rank one isometry of the universal cover of the Salvetti complex of $\gam$.
\item
The element $g\in A(\gam)$ is not conjugate into a join subgroup of $A(\gam)$.
\end{enumerate}
\end{thm}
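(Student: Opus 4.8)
The plan is to establish the three-way equivalence first, and then to extract acylindricity as a consequence of the structural understanding obtained along the way. The key geometric model throughout is the universal cover $\widetilde{S(\gam)}$ of the Salvetti complex, a $\mathrm{CAT}(0)$ cube complex on which $A(\gam)$ acts geometrically; its hyperplanes are indexed by $V(\gam^e)$, and the contact graph of $\widetilde{S(\gam)}$ is quasi-isometric to $\gam^e$ by work of Hagen. So $A(\gam) \curvearrowright \gam^e$ is, up to quasi-isometry, the action on the contact graph, and many statements about $\gam^e$ can be imported from the cubical picture.

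For the equivalences, I would argue $(3)\Rightarrow(1)\Rightarrow(2)\Rightarrow(3)$, or a cyclic variant. For $(3)\Rightarrow(1)$: if $g$ is not conjugate into a join subgroup, one produces a pair of hyperplanes (equivalently vertices $v^x, w^y \in V(\gam^e)$) that are ``strongly separated'' along the axis of $g$, or more directly one shows the orbit map $\bZ \to \gam^e$, $n \mapsto g^n \cdot v$ is a quasi-isometric embedding; the obstruction to this is exactly that the element's support, after conjugation, spans a join. Here I would invoke the normal-form/centralizer theory for right-angled Artin groups (Servatius, or the centralizer description) to reduce to the case where $g$ is cyclically reduced and not supported in a join, and then find two non-adjacent vertices in $\supp(g)$ that remain far apart under powers of $g$. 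For $(1)\Rightarrow(2)$: loxodromicity on the contact graph is one of the standard characterizations of rank-one behavior in a $\mathrm{CAT}(0)$ cube complex — an axis that makes linear progress in the contact graph cannot bound a half-flat, since a half-flat forces the crossing hyperplanes to pile up and keep the contact-graph distance bounded. For $(2)\Rightarrow(3)$: a rank-one isometry cannot preserve a flat, whereas any element conjugate into a join subgroup $A(\gam_1 * \gam_2) \cong A(\gam_1)\times A(\gam_2)$ either has nontrivial projection to both factors (hence preserves a flat coming from the product, i.e.\ is not rank one) or lies in a single factor (hence acts elliptically or reducibly and is again not rank one in the ambient complex).

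For acylindricity of $A(\gam) \curvearrowright \gam^e$, the strategy is to bound, for each $R$, the number of group elements that move two sufficiently-distant vertices of $\gam^e$ by at most $R$. Translate this through Hagen's quasi-isometry into the contact graph of $\widetilde{S(\gam)}$: if $v^x$ and $w^y$ are at large contact-graph distance, any hyperplane $H$ contacting both is forced to live in a bounded ``interval'' between them, and the stabilizer of a bounded configuration of hyperplanes in a $\mathrm{CAT}(0)$ cube complex with a geometric action is finite with size controlled by $R$ and the complexity of $\gam$. The main obstacle here — and the heart of the argument — is making the coarse geometry of $\gam^e$ sufficiently rigid: one must show that a pair of far-apart vertices ``pins down'' the location of any element moving them a bounded amount, which amounts to proving a form of the bounded geodesic image / uniqueness-of-quasigeodesics behavior in $\gam^e$. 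I expect this step, rather than the equivalences, to require the most care, since it is precisely the point where the purely algebraic definition of $\gam^e$ must be shown to behave like the curve graph of a surface.
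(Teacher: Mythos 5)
Your proposal takes a genuinely different route from the one the paper points to, and it also contains a gap in the acylindricity portion.

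On the route: the paper is a survey, and it cites \cite{KK2013b} for acylindricity and the elliptic/loxodromic characterization, and \cite{BC2010} (Behrstock--Charney) specifically for the rank-one-isometry equivalence; the original argument in \cite{KK2013b} works directly with $\gam^e$ and the combinatorics of cyclic normal forms and conjugation, not via Hagen's contact graph. The paper in fact says explicitly that it avoids the contact-graph/HHS viewpoint. Your plan routes everything through the quasi-isometry between $\gam^e$ and the contact graph of $\widetilde{\mS(\gam)}$. That is a legitimate modern point of view and buys you access to cubical machinery, but it requires either knowing acylindricity of the contact-graph action independently (a fact whose standard proofs postdate \cite{KK2013b} and themselves require real work), or transferring acylindricity across an equivariant quasi-isometry of hyperbolic spaces, which is true but should be stated as a lemma rather than assumed. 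For the equivalences themselves, your cyclic scheme $(3)\Rightarrow(1)\Rightarrow(2)\Rightarrow(3)$ is reasonable, and your $(2)\Rightarrow(3)$ is essentially the correct Behrstock--Charney argument: if $g$ is conjugate into a nontrivial join then its centralizer contains $\bZ^2$, so its axis bounds a half-flat and $g$ is not rank one.

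On the gap: the acylindricity sketch does not go through as written. You claim that ``the stabilizer of a bounded configuration of hyperplanes in a CAT(0) cube complex with a geometric action is finite with size controlled by $R$ and the complexity of $\gam$.'' In $\widetilde{\mS(\gam)}$ the stabilizer of a single hyperplane is a conjugate of $\langle \st(v)\rangle$, which is infinite, and even the common stabilizer of several hyperplanes can be an infinite parabolic. Moreover, acylindricity is not a statement about stabilizers at all but about quasi-stabilizers: one must bound the number of $g$ with $d(g\cdot x, x), d(g\cdot y, y)\le r$ once $d(x,y)$ is large. You also write that a hyperplane ``contacting both'' far-apart vertices lives in a bounded interval, but two vertices at contact-graph distance greater than $1$ have no common contact at all; what one actually needs is control over the hyperplanes separating the two vertices and over the set of elements coarsely preserving that separating family. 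You acknowledge that this is ``the heart of the argument'' and that you do not know how to carry it out, and indeed this is precisely where the direct combinatorial analysis of $\gam^e$ in \cite{KK2013b} does its work: one uses the explicit description of $V(\gam^e)$ as conjugates $v^g$, cyclic normal forms, and the star structure of $\gam^e$ to show that a bounded-displacement element is pinned down by two far-apart vertices. As it stands your proposal identifies the correct difficulty but does not resolve it.
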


We will not give detailed proofs of most of the results in this survey. We will give proof sketches where it is feasible, and we will strive to give
complete references. As already suggested above, we will omit large parts of the theory and neglect various viewpoints. The specific topics
discussed herein undoubtedly reflect the idiosyncratic tastes of the author.

\subsection{Notation and terminology}

Most of the notation and terminology used in this survey is standard or nearly standard.
All graphs will be
undirected and simplicial unless otherwise noted, so that in particular there are no double edges nor edges that start and
end at a single vertex.
The \emph{complement}\index{complement graph}
of a graph $\gam$ is the complement of $\gam$ in the complete graph on the vertices of $\gam$; that is,
complete all the missing edges of $\gam$ and then delete the edges that were present in $\gam$. Two vertices are therefore
connected by an edge in the complement of $\gam$ if and only if they are not connected by an edge in $\gam$.

A graph $\gam$ is a 
\emph{join}\index{graph join}
if its complement graph is disconnected. The join of graphs $\gam_1$ and $\gam_2$ is written $\gam_1*\gam_2$, and every vertex of
$\gam_1$ is adjacent to every vertex of $\gam_2$. The join of two graphs mimics the geometric join in topology: if $A$ and $B$ are
topological spaces, then the join $A*B$ is the quotient of $A\times B\times I$ that collapses $A\times B\times \{0\}$ to $A$ and
$A\times B\times \{1\}$ to $B$.
For us, a subgraph $\Lambda$ of a graph $\gam$ is always \emph{full}\index{full subgraph}, which is to say
$\Lambda$ contains all edges that are present in $\gam$. A \emph{clique}\index{clique} is a complete graph, and a
\emph{$k$-clique} is a complete graph on
$k$ vertices.
The set $V(\gam)$, viewed as a subset
of $A(\gam)$, is called the set of \emph{vertex generators}\index{vertex generator}
of $A(\gam)$. The \emph{link}\index{link} of a vertex $v\in V(\gam)$ is written $\lk(v)$ and
consists of the vertices that are adjacent to $v$. If $\emptyset\neq S\subset V(\gam)$ then \[\lk(S)=\bigcap_{s\in S}\lk(s).\]
The \emph{star}\index{star} of $v$ is given by $\st(v)=\lk(v)\cup\{v\}$. The \emph{degree}\index{degree of a vertex} of a vertex $v$
is given by $|\lk(v)|$. A vertex $v$ is \emph{isolated}\index{isolated vertex} if $\lk(v)$ is empty. A graph is
\emph{totally disconnected}\index{totally disconnected graph} if every vertex is isolated.
A \emph{path}\index{path} in $\gam$ is a tuple of vertices $\overline{p}=(v_1,\ldots,v_k)$ in $V(\gam)$ such that $\{v_i,v_{i+1}\}\in E(\gam)$
for all suitable indices.
The parameter $k$ is arbitrary, and the \emph{length} of the path $\overline{p}$ is $k-1$.
A \emph{cycle} or \emph{circuit}\index{circuit} is a path for which $v_1=v_k$ and for which $v_i\neq v_{i+2}$ for all suitable indices.
A graph is \emph{connected}\index{connected graph} if for all pairs of vertices $v,w\in V(\gam)$, there
is a path in $\gam$ such that $v=v_1$ and $w=v_k$.

The \emph{rank of a linear map}\index{rank} is the dimension of its image, and the \emph{rank of a group}
is the minimal number of generators of the group. The identity element of a group is denoted $1$ with an exception in the case of
additive abelian groups when it is written $0$.

Let $\gam$ be a (possibly infinite) graph. We build a graph $\gam_k$, called the \emph{clique graph}\index{clique graph}
of $\gam$ as follows.
We start with the vertices and edges of $\gam$. For every complete subgraph $K\subset V(\gam)$ with at least two vertices,
we add a new vertex $v_K$ to
$V(\gam_k)$. If $K_1$ and $K_2$ are cliques such that $V(K_1)\cup V(K_2)$ also spans a complete subgraph of
$\gam$, then we add an edge
$\{v_{K_1},v_{K_2}\}$ to $E(\gam_k)$. Finally, we add an edge between each vertex of the form $v_K$ and the vertices making up $K$.
The resulting graph is the clique graph. It is helpful to illustrate the clique graph with an
example: if $\gam$ is a graph without triangles then the only cliques with two or more vertices are the edges of $\gam$. In this case,
the clique graph of $\gam$ is just $\gam$ with an extra vertex $v_e$ for each edge $e\in E(\gam)$, and two edges connecting
$v_e$ to the two vertices of $\gam$ spanning $e$. Thus, $\gam_k$ is just a copy of $\gam$ with a ``fin" hanging off each edge.

\subsection{A remark about generators}
When we specify a right-angled Artin group, we will write $A(\gam)$. Since $A(\gam)$ as an abstract group determines $\gam$ up to
isomorphism, the specification of $\gam$ (viewed as an abstract graph) does not constitute a choice of generators for $A(\gam)$.
However, once we speak of particular generators of $A(\gam)$, we have implicitly chosen an identification of $V(\gam)$ with
a set of generators for $A(\gam)$. The author has taken pains to avoid ambiguities that could cause confusion for the reader.

\section{The cohomology ring of a right-angled Artin group}
A central role in the dictionary between algebra and combinatorics is played by the cohomology of a right-angled Artin group.
Recall that the cohomology of a group $G$ is defined to be the cohomology of a $K(G,1)$, which is unique up to homotopy equivalence
(see~\cite{Hatcher}, for instance).
A right-angled Artin group has a very
easy to describe $K(G,1)$, and a large number of natural retractions allows for an efficient calculation of the cohomology algebra.
For the entirety of thus section, $R$ will denote a commutative ring with a unit, unless otherwise noted.

\subsection{The topology of the Salvetti complex}

We will write $\mS(\gam)$ for the Salvetti complex\index{Salvetti complex} of $\gam$, and we construct it as follows (cf.~\cite{Charney2007}).
Let $\gam$ be a graph with $n=|V(\gam)|$. We fix a bijection between $V(\gam)$ and $\{1,\ldots,n\}$.
Consider now the unit cube $[0,1]^n\subset \bR^n$. We build a certain subset
$S\subset [0,1]^n$, cube by cube. For $1\leq i\leq n$, we write $J_i$ for the unit segment in the $i^{th}$ coordinate direction, emanating
from the origin. We include $J_i$ in $S$ for all $i$. Now, if $K\subset \{1,\ldots,n\}$ consists of a collection of vertices which span a complete
subgraph of $\gam$, then we include the subcube of $[0,1]^n$ spanned by $\{J_i\}_{i\in K}$ in $S$.

Once $S$ has been constructed in this way, we set $\mS(\gam)$ to be the image of $S$ in $\bR^n/\bZ^n$, where $\bZ^n$ acts on
$\bR^n$ by usual integer translations. Thus, the complex $\mS(\gam)$ is realized as a subcomplex of an $n$-dimensional torus.

\begin{prop}\label{p:top of salv}
The following are properties of $\mS(\gam)$.
\begin{enumerate}
\item
The fundamental group of $\mS(\gam)$ is isomorphic to $A(\gam)$.
\item
The universal cover of $\mS(\gam)$ is contractible.
\end{enumerate}
\end{prop}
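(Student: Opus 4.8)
The plan is to build the Salvetti complex $\mS(\gam)$ as a non-positively curved (NPC) cube complex and then invoke the standard fact that the universal cover of an NPC cube complex is $\catz$ (hence contractible), while computing $\pi_1$ directly from the cube structure. First I would verify part (1) by applying the Seifert--van Kampen theorem or, more efficiently, by recognizing $\mS(\gam)$ as the presentation complex of $A(\gam)$ with its $2$--skeleton: the single $0$--cell gives no generators beyond loops, each segment $J_i$ descends to a loop in $\bR^n/\bZ^n$ giving one generator for each vertex of $\gam$, and each $2$--torus coming from an edge $\{i,j\}\in E(\gam)$ contributes exactly the relator $[v_i,v_j]=1$. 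Higher-dimensional cubes attach along $2$--spheres' worth of boundary and so do not affect $\pi_1$. This yields precisely the presentation $\langle V(\gam)\mid [v,w]=1 \text{ for } \{v,w\}\in E(\gam)\rangle$, so $\pi_1(\mS(\gam))\cong A(\gam)$.

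For part (2), I would first observe that $\mS(\gam)$ is a cube complex: it is a union of standard Euclidean subcubes of $[0,1]^n$ glued via the quotient $\bR^n\to\bR^n/\bZ^n$, and the gluing maps are isometries of faces. The universal cover $\yt{\mS(\gam)}$ inherits a cube complex structure. By Gromov's link condition, a cube complex is NPC (locally $\catz$) if and only if the link of every vertex is a flag simplicial complex. Since $\mS(\gam)$ has a single vertex, I only need to analyze its link there. The link is assembled from the corners of the cubes meeting at the origin; a set of coordinate directions $\{i_1,\dots,i_k\}$ spans a subcube of $\mS(\gam)$ exactly when $\{v_{i_1},\dots,v_{i_k}\}$ spans a clique in $\gam$. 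Thus the link is the simplicial complex whose simplices are the cliques of $\gam$ — this is the flag complex on $\gam$ — and a flag complex is by definition flag. Hence $\mS(\gam)$ is NPC.

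Once NPC is established, the Cartan--Hadamard theorem for locally $\catz$ spaces (Bridson--Haefliger) gives that $\yt{\mS(\gam)}$ is a (globally) $\catz$ space, and $\catz$ spaces are uniquely geodesic and contractible — indeed geodesic contraction to a basepoint provides an explicit deformation retraction. This completes (2). The main obstacle is not any single deep step but rather carefully matching the combinatorics of the cube complex to the graph $\gam$: specifically, checking that the link of the unique vertex of $\mS(\gam)$ is exactly the flag complex spanned by $\gam$ and that it contains no missing higher simplices, which is precisely where the defining condition ``$[v,w]=1$ \emph{if and only if} $\{v,w\}\in E(\gam)$'' is used. The rest is an appeal to well-known facts about NPC cube complexes.
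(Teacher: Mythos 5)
Your proposal follows the same route the paper gestures at (the paper itself defers the details to Bridson--Haefliger, Gromov, and Wise): read off $\pi_1$ from the $2$--skeleton viewed as a presentation complex, then establish non-positive curvature via Gromov's link condition and invoke the Cartan--Hadamard theorem for locally $\catz$ spaces.

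There is, however, a small but genuine imprecision in your identification of the vertex link. Because each $k$--cube of $\mS(\gam)$ has $2^k$ corners and all of them are glued to the unique vertex under the quotient $\bR^n\to\bR^n/\bZ^n$, the link has $2\,|V(\gam)|$ vertices --- one for each \emph{oriented} coordinate direction $v_i^{\pm}$ --- and its simplices are the sets $\{v_{i_1}^{\epsilon_1},\dots,v_{i_k}^{\epsilon_k}\}$ for which the underlying (necessarily distinct) vertices $\{v_{i_1},\dots,v_{i_k}\}$ span a clique of $\gam$. This is the ``doubled,'' or octahedralized, version of the flag complex of $\gam$, not the flag complex on $\gam$ itself. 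The conclusion you draw is still correct: two link vertices $v_i^{\epsilon}$ and $v_j^{\delta}$ are adjacent precisely when $i\neq j$ and $\{v_i,v_j\}\in E(\gam)$ (in particular $v_i^{+}$ and $v_i^{-}$ are never adjacent), so any pairwise-adjacent set of link vertices has pairwise-distinct, pairwise-adjacent underlying indices, which by construction of $\mS(\gam)$ span a cube; hence the link is flag and Gromov's criterion applies. But since you point to ``carefully matching the combinatorics of the cube complex to the graph $\gam$'' as the crux, you should get this identification exactly right before appealing to it.
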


That the fundamental group of $\mS(\gam)$ is isomorphic to $A(\gam)$ is a straightforward calculation
using Van Kampen's Theorem. That the universal cover of $\mS(\gam)$
is contractible is much less obvious, and follows from the fact that $\mS(\gam)$ admits the structure of a locally
CAT(0) cube complex\index{CAT(0) cube complex}.
To delve into the details would take us far afield, and we shall content ourselves to direct the reader to some references, such as
~\cite{BH1999,gromov99,Wise2012}. The crucial
point here is that the homology and cohomology of $\mS(\gam)$ are in fact a invariants of $A(\gam)$, since $\mS(\gam)$ is a $K(G,1)$
for $G=A(\gam)$\index{group cohomology}.

The homology of $\mS(\gam)$ is easily calculated by a standard Mayer--Vietoris argument. In our construction of $\mS(\gam)$ above, we
obtain a distinguished $k$-subtorus of $\mS(\gam)$ for every $k$-subclique of $\gam$. When two such distinguished
subtori (corresponding to subcliques $K_1$ and $K_2$ of $\gam$) meet, they meet along the distinguished subtorus corresponding
to the intersection $K_1\cap K_2$ (which is just the basepoint in case this intersection is empty). Thus, we see that:

\begin{prop}\label{p:raag-homo}
Let $R$ be a ring. Then $H_k(A(\gam),R)\cong R^{N_k}$, where $N_k$ denotes the number of $k$-cliques in $\gam$.
\end{prop}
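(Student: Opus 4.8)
The plan is to compute the homology of the Salvetti complex $\mS(\gam)$ directly from its cell structure, exploiting the fact (noted just above) that $\mS(\gam)$ is built out of distinguished subtori, one $k$--torus $T_K$ for each $k$--clique $K$ of $\gam$, glued along coordinate subtori according to intersections of cliques. The key structural observation is that the cellular chain complex of $\mS(\gam)$ has a particularly rigid form: a $k$--cell of $\mS(\gam)$ is exactly a $k$--dimensional subcube, which corresponds to a choice of $k$ coordinate directions $J_{i_1},\dots,J_{i_k}$ spanning a clique, i.e.\ to an ordered $k$--clique of $\gam$. Since each $T_K$ is a subtorus of the $n$--torus and all attaching maps are the standard torus attaching maps restricted to faces, every cell is attached so that each boundary face is traversed once with opposite orientations. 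Concretely, the boundary of the $k$--cube cell on directions $\{i_1<\dots<i_k\}$ is $\sum_j (-1)^j\big(\text{face}(i_j)|_{1} - \text{face}(i_j)|_{0}\big)$, and the two copies of each $(k-1)$--face are identified in $\mS(\gam)$ because $\mS(\gam)$ lives in the quotient torus $\bR^n/\bZ^n$. Hence these two terms cancel identically, and \emph{all cellular boundary maps are zero}.

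First I would make the cell structure precise: describe the CW structure on $\mS(\gam)$ with one $0$--cell (the image of the origin), and for each $k$--clique $K$ one $k$--cell, so that the number of $k$--cells equals $N_k$. Second, I would verify that the boundary map $\partial_k\colon C_k(\mS(\gam);R)\to C_{k-1}(\mS(\gam);R)$ vanishes: this follows from the computation above once one checks that the attaching map of the cell indexed by $K$ sends each pair of opposite $(k-1)$--faces onto the single $(k-1)$--cell indexed by $K\setminus\{i_j\}$ via the identity, with the orientations arranged so that the two contributions cancel. This is exactly the familiar fact that the standard CW structure on the $n$--torus has trivial cellular differential, transported through the inclusion $\mS(\gam)\hookrightarrow \bR^n/\bZ^n$. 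Third, with $\partial=0$ on the nose, one reads off $H_k(\mS(\gam);R)=C_k(\mS(\gam);R)\cong R^{N_k}$. Finally, invoke Proposition~\ref{p:top of salv}, which gives that $\mS(\gam)$ is a $K(A(\gam),1)$, so $H_k(A(\gam),R)\cong H_k(\mS(\gam);R)\cong R^{N_k}$, as claimed.

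The main obstacle is purely bookkeeping: getting the signs and orientations in the attaching maps right so that one is confident the cellular differential genuinely vanishes rather than merely vanishes up to something. The cleanest way around this is the route suggested by the text: rather than fighting the cell structure, run a Mayer--Vietoris argument. Write $\mS(\gam)$ as the union of its top-dimensional distinguished subtori (or induct on the number of cliques, adding one subtorus $T_K$ at a time), using that $T_{K_1}\cap T_{K_2}=T_{K_1\cap K_2}$ and that $H_*(T^m;R)$ is a free $R$--module of rank $\binom{m}{\bullet}$ with the Künneth isomorphism splitting all the connecting maps. The inclusion-induced maps in the Mayer--Vietoris sequence are split injections onto direct summands because each is the inclusion of a coordinate subtorus into a larger torus, so every connecting homomorphism is zero and the long exact sequence breaks into short exact sequences that sum the Betti numbers; tracking this through the induction yields $\dim_R H_k = \#\{k\text{--cliques}\} = N_k$ and freeness of the module. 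Either approach works; I would present the chain-level computation for brevity and remark that Mayer--Vietoris gives the same answer.
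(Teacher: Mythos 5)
Your primary argument---observing that the cellular differential of $\mS(\gam)$ vanishes identically---is correct and is a genuinely different route from the paper's. The paper computes $H_*(\mS(\gam),R)$ via Mayer--Vietoris: decompose $\mS(\gam)$ into distinguished subtori, one for each clique, noting that subtori meet along the subtorus of the intersection clique, and induct; the inclusion maps are split injections (via the retractions of Proposition~\ref{prop:retraction}), so the connecting homomorphisms vanish and the Betti numbers add. Your chain-level approach is cleaner in that it avoids exact sequences entirely: $\mS(\gam)$ is by construction a CW subcomplex of $\bR^n/\bZ^n$ with the product cell structure, whose cellular chain complex is the $n$-fold tensor product of $R \xrightarrow{0} R$ and hence has zero differential; the differential of any subcomplex is the restriction, so it too is zero, and $H_k \cong C_k \cong R^{N_k}$ immediately. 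The Mayer--Vietoris route, which you also sketch, is more in the spirit of the retraction machinery the paper develops (and reuses for cohomology), but requires some care in setting up the induction when more than two maximal cliques are present---adding one maximal clique $K$ at a time, the intersection with the union of the previously added tori is a union of coordinate subtori of $T_K$, not a single torus, so one must either induct on vertices (splitting off $\st(v)$ over $\lk(v)$) or first establish that such unions have free homology. Your chain-level argument sidesteps all of that bookkeeping. Both establish the result, and your framing, including the explicit acknowledgment of the sign issue and its resolution via the torus tensor-product structure, is sound.
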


Here and throughout, we always assume that the $A(\gam)$ action on the ring of coefficients
is trivial, so that our homology and cohomology
groups are always untwisted. Computation of the twisted groups is much more complicated; cf.~\cite{Davis98,MJ05}.
In particular, the rank of the abelianization of $A(\gam)$ is the number of vertices of $\gam$, and the dimension of the second homology
coincides with the number of edges.

The cohomology groups of $A(\gam)$ have the same ranks as the homology groups, and the formal structure of $A(\gam)$ (or
of $\mS(\gam)$)
allows one to give a satisfactory description of the cohomology algebra of $A(\gam)$. For this, we let $T_k$ denote the $k$-dimensional torus.
As is standard, the cohomology algebra of $T_k$ with coefficients in $R$ is $\bigwedge(R^k)$, the exterior algebra of $R^k$.

\begin{prop}\label{prop:retraction}
Let $\Lambda\subset\gam$ be a subgraph. Then the map $A(\gam)\longrightarrow A(\Lambda)$ defined by the identity for vertices
$\lambda\in V(\Lambda)$ and by $v\mapsto 1$ otherwise is a retraction of groups.
\end{prop}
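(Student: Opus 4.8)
The plan is to construct the retraction directly from the presentation of $A(\gam)$, invoking the universal property of group presentations (von Dyck's theorem) twice: once to produce the candidate map $r\colon A(\gam)\to A(\Lambda)$, and once to produce a section $s\colon A(\Lambda)\to A(\gam)$ with $r\circ s=\mathrm{id}$.

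First I would define $r$ on generators by $r(\lambda)=\lambda$ for $\lambda\in V(\Lambda)$ and $r(v)=1$ for $v\in V(\gam)\setminus V(\Lambda)$, and then check that $r$ respects every defining relation of $A(\gam)$. Such a relation has the form $[v,w]=1$ with $\{v,w\}\in E(\gam)$. If at least one of $v,w$ lies outside $V(\Lambda)$, its image is $1$ and $[1,x]=1$ in any group, so the relation is satisfied. If instead both $v,w\in V(\Lambda)$, then --- and here is the one point that genuinely requires care --- our standing convention that subgraphs are full forces $\{v,w\}\in E(\Lambda)$, so $[v,w]=1$ is already a defining relation of $A(\Lambda)$ and holds in the target. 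Hence $r$ extends to a well-defined homomorphism.

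Next I would define $s\colon A(\Lambda)\to A(\gam)$ on generators by $s(\lambda)=\lambda$, and verify it is well defined: each defining relation $[\lambda,\mu]=1$ of $A(\Lambda)$ arises from an edge $\{\lambda,\mu\}\in E(\Lambda)\subseteq E(\gam)$, hence holds in $A(\gam)$. Finally, $r\circ s$ fixes every generator $\lambda$ of $A(\Lambda)$, so $r\circ s=\mathrm{id}_{A(\Lambda)}$, which exhibits $r$ as a retraction and proves the proposition.

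There is no substantive obstacle: the argument is entirely a matter of tracking relations, and the only subtlety is the fullness of $\Lambda$, without which $r$ would not be well defined. It is worth remarking in passing that $s$ is in fact injective --- so that $A(\Lambda)$ is a genuine retract subgroup --- which follows automatically from the existence of any retraction onto the image of $s$; this is not needed for the statement as phrased, but it is the reason the proposition is useful in practice (e.g.\ for the cohomological computations that follow).
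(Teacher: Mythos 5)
Your proof is correct and is the standard argument; the paper states this proposition without proof, treating it as routine. You rightly identify the one substantive point — that the paper's standing convention that subgraphs are full is what makes $r$ well defined on the relations supported in $\Lambda$ — and the two applications of von Dyck's theorem together with $r\circ s=\mathrm{id}_{A(\Lambda)}$ complete the verification cleanly.
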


Of course, the fact that Salvetti complexes are classifying spaces for right-angled Artin groups means that Proposition~\ref{prop:retraction}
admits a dual statement for spaces. That is, there are natural retractions 
$\mS(\gam)\longrightarrow\mS(\Lambda)$ which induce the corresponding
maps on fundamental groups whenever $\Lambda\subset\gam$ is a subgraph.

Specializing to the case where $K\subset\gam$ is a $k$-clique, we get a natural
surjective map \[A(\gam)\longrightarrow A(K)\cong\bZ^k,\] and thus an induced
injective map on cohomology $\bigwedge(R^k)\longrightarrow H^*(A(\gam),R)$. Suppose
we have a decomposition of graphs $\gam=\Lambda_1\cup\Lambda_2$ and $\Theta=\Lambda_1\cap\Lambda_2$. For technical reasons,
we suppose that every edge between $\Lambda_1$ and $\Lambda_2$ is realized by an edge between $\Lambda_i$ and $\Theta$ for
$i\in\{1,2\}$.
We obtain a
natural commutative diagram of retractions.

\[\begin{tikzcd}
A(\gam) \arrow{r} \arrow{d} & A(\Lambda_1) \arrow{d} \\
A(\Lambda_2) \arrow{r} & A(\Theta)
\end{tikzcd}
\]

Replacing the retractions by inclusions of groups, $A(\gam)$ acquires the structure of a graph of groups with vertex groups $A(\Lambda_1)$
and $A(\Lambda_2)$ and edge group $A(\Theta)$ (cf.~\cite{Serre1977}).
Without the assumption that every edge between $\Lambda_1$ and $\Lambda_2$ be
realized by $\Theta$, this previous assertion would no longer be true.

Dualizing, we get a commutative diagram on cohomology.

\[\begin{tikzcd}
H^*(A(\gam),R) & H^*(A(\Lambda_1),R)\arrow{l} \\
H^*(A(\Lambda_2),R)\arrow{u}  & H^*(A(\Theta),R)\arrow{l}\arrow{u}
\end{tikzcd}
\]

In category theory language, $H^*(A(\gam),R)$ is the pushout of the corresponding diagram. Again, the technical hypothesis
on the decomposition of $\gam$ is hidden in this last assertion, since the assertion follows from the Mayer--Vietoris sequence and would
be false without this hypothesis (cf.~ for example when $\gam$ is a complete graph and $\Lambda_1$ and $\Lambda_2$ are both proper
subgraphs).

These considerations show that one can
describe the cohomology algebra of $A(\gam)$ entirely in terms of exterior algebras by inductively building up $\gam$ from its cliques.
In particular, one can take an appropriate exterior algebra for each maximal clique in $\gam$, and identify exterior subalgebras
corresponding to intersections of maximal cliques.
The simplest cliques are the $1$-cliques, and a retraction 
\[A(\gam)\longrightarrow\langle v\rangle\cong\bZ\] for $v\in V(\gam)$ allows us to identify
preferred generators $\{v^*\mid v\in V(\gam)\}$ for $H^1(A(\gam),R)$, which we will refer to as the \emph{dual $1$-classes} to the
vertex generators. These dual $1$-classes can be interpreted as dual to certain natural subspaces of $\mS(\gam)$, though we will
not require this point of view here.

We clearly have that $H^*(A(\gam),R)$ is generated by its degree one
part. Now let $v,w\in V(\gam)$ and let $v^*$ and $w^*$ be the corresponding dual $1$-classes. There is a retraction
$A(\gam)\longrightarrow \langle v,w\rangle$, and the target group is either $\bZ^2$ or $F_2$, corresponding to the cases where
$\{v,w\}\in E(\gam)$ and where $\{v,w\}\notin E(\gam)$ respectively. In the first case, the cup product $v^*\cup w^*$ is nontrivial
and in the second case, the cup product vanishes.

The most important consequence of the previous discussion for us in the sequel is the following,
which characterizes the degree one and degree two
parts of the cohomology of $A(\gam)$ together with the cup product pairing:

\begin{prop}\label{prop:coho}
Let $\gam$ be a finite simplicial graph with $V(\gam)=\{v_1,\ldots,v_n\}$ and let $E(\gam)=\{e_1,\ldots,e_m\}$. Then there are bases
$\{v_1^*,\ldots,v_n^*\}$ and $\{e_1^*,\ldots,e_m^*\}$ for $H^1(A(\gam),R)$ and $H^2(A(\gam),R)$ respectively, such that:
\begin{enumerate}
\item
$v_i^*\cup v_j^*=0$ if $\{v_i,v_j\}\notin E(\gam)$.
\item
$v_i^*\cup v_j^*=\pm e_{\ell}^*$ if $\{v_i,v_j\}=e_{\ell}$.
\end{enumerate}
\end{prop}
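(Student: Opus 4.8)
The plan is to build the two bases explicitly from the retraction machinery developed above, and then compute cup products using the $2$-vertex retractions. First I would take $\{v_1^*,\ldots,v_n^*\}$ to be the dual $1$-classes already constructed, coming from the retractions $A(\gam)\to\langle v_i\rangle\cong\bZ$; since $H^1(A(\gam),R)\cong R^n$ by Proposition~\ref{p:raag-homo} (together with the universal coefficients / the fact that $H_1$ is free), these $n$ classes form a basis. Then for each edge $e_\ell=\{v_i,v_j\}\in E(\gam)$ I would define $e_\ell^*:=v_i^*\cup v_j^*$ (with an arbitrary but fixed choice of ordering of the two endpoints, which accounts for the sign ambiguity), and argue that the resulting $m$ classes form a basis of $H^2(A(\gam),R)\cong R^m$.

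The key computation is statement (1) and the fact that the $e_\ell^*$ are nonzero and independent. For (1): given $\{v_i,v_j\}\notin E(\gam)$, use the retraction $r\co A(\gam)\to\langle v_i,v_j\rangle\cong F_2$ of Proposition~\ref{prop:retraction}. On cohomology $r^*\co H^*(F_2,R)\to H^*(A(\gam),R)$ sends the two generators of $H^1(F_2,R)$ to $v_i^*$ and $v_j^*$ (this is where I must check that the dual $1$-class construction is compatible with composing retractions, which is immediate since $r$ restricted to $\langle v_i\rangle$ is the identity). Since $H^2(F_2,R)=0$, we get $v_i^*\cup v_j^* = r^*(\text{(gen)}\cup\text{(gen)})=r^*(0)=0$. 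The same argument with the target $\bZ^2$ when $\{v_i,v_j\}\in E(\gam)$ shows $v_i^*\cup v_j^*$ is the image of the nonzero generator of $H^2(\bZ^2,R)=\bigwedge^2 R^2\cong R$, hence is nonzero; in fact it is a generator of the rank-one summand of $H^2(A(\gam),R)$ corresponding to that edge, via the splitting $H^2(A(\gam),R)\cong\bigoplus_{e\in E(\gam)} R$ coming from the collection of $2$-clique retractions.

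The main obstacle will be justifying that the classes $\{e_\ell^*\}$ actually form a \emph{basis} of $H^2$, not merely a nonzero independent-looking set, and doing so cleanly. The clean way is to invoke the inductive pushout description of $H^*(A(\gam),R)$ given above: build $\gam$ up by adding one maximal clique at a time, with each edge-group overlap an exterior subalgebra, so that by Mayer--Vietoris one sees $H^2$ has a direct-sum decomposition indexed by the edges of $\gam$, with the summand for $e=\{v_i,v_j\}$ generated precisely by $v_i^*\cup v_j^*$. Concretely, the retraction $A(\gam)\to A(K)\cong\bZ^2$ for $K$ the $2$-clique on $\{v_i,v_j\}$ gives a map $H^2(A(\gam),R)\to \bigwedge^2 R^2\cong R$ hitting $e_\ell^*$, and assembling these over all edges gives an isomorphism $H^2(A(\gam),R)\xrightarrow{\sim} R^m$ whose inverse sends the $\ell$-th standard basis vector to $e_\ell^*$. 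Once the decomposition is in hand, (1) and (2) follow from the $2$-vertex computations above, and freeness of $H^2(A(\gam),R)\cong R^m$ (Proposition~\ref{p:raag-homo}) guarantees the $e_\ell^*$ are a genuine basis. I would present the pushout/Mayer--Vietoris step as the crux and keep the cup-product computations as short verifications.
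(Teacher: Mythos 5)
Your proof is correct and follows essentially the same route the paper sketches in the paragraphs preceding the proposition: dual $1$-classes from single-vertex retractions, cup product computations via the $2$-vertex retractions (with target $\bZ^2$ or $F_2$), and the Mayer--Vietoris/pushout description of $H^*(A(\gam),R)$ supplying the direct-sum decomposition of $H^2$ indexed by edges. The paper treats the proposition as an immediate consequence of that preceding discussion rather than giving a separate proof, and your write-up makes explicit exactly the verifications the paper leaves implicit.
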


The description of $H^1(A(\gam),R)$ and $H^2(A(\gam),R)$ furnished by Proposition~\ref{prop:coho} will be essential in describing many of
the correspondences between the group theoretic structure of $A(\gam)$ and the combinatorics of $\gam$.

\subsection{Vector spaces with a vector-space valued pairing}

In the sequel, it is sometimes convenient to consider vector spaces equipped with a bilinear vector-space valued pairing.
We will write $q\colon V\times V\longrightarrow W$ for such a pairing, where 
$V$ and $W$ are both finite dimensional vector spaces over the same
field $F$. The pairing $q$ is intended to generalize the cup product pairing \[\cup\colon H^1(A(\gam),F)\times H^1(A(\gam),F)
\longrightarrow H^2(A(\gam),F),\] and so we will always adopt
the assumption that $q$ is either symmetric or anti-symmetric unless otherwise noted. This assumption on $q$ is mostly for convenience,
since relaxing some sort of symmetry assumption only adds unnecessary layers of complication that do not enrich the underlying theory
in a meaningful way.

We will say that the triple $(V,W,q)$ is \emph{pairing-connected}\index{pairing-connectedness},
if for all nontrivial direct sum decompositions $V\cong V_0\oplus V_1$,
there are vectors $v_0\in V_0$ and $v_1\in V_1$ such that $q(v_0,v_1)\neq 0$. With this terminology, we can formulate and prove
an entry in the algebra-combinatorics dictionary.

\begin{prop}[See~\cite{FKK2020exp}]\label{prop:pairing-conn}
Let $\gam$ be a finite simplicial graph, let \[V=H^1(A(\gam),F),\quad W=H^2(A(\gam),F),\] and let $q$ be the cup product pairing. Then $\gam$
is connected if and only if $(V,W,q)$ is pairing-connected.
\end{prop}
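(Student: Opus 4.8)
The plan is to transfer the combinatorial statement ``$\gam$ is connected'' into linear algebra via the explicit bases from Proposition~\ref{prop:coho}, and to argue in both directions by relating connected components of $\gam$ to direct sum decompositions of $V=H^1(A(\gam),F)$ that are ``compatible'' with the cup product pairing. Fix the bases $\{v_1^*,\ldots,v_n^*\}$ for $V$ and $\{e_1^*,\ldots,e_m^*\}$ for $W$, so that $v_i^*\cup v_j^*$ is $\pm e_\ell^*$ when $\{v_i,v_j\}=e_\ell\in E(\gam)$ and vanishes otherwise. In particular, for any two subsets $S,T\subseteq V(\gam)$, the span of $\{v_i^*:v_i\in S\}$ pairs trivially with the span of $\{v_j^*:v_j\in T\}$ precisely when there are no edges of $\gam$ between $S$ and $T$.

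First I would prove the contrapositive of one implication: if $\gam$ is disconnected, then $(V,W,q)$ is not pairing--connected. Write $\gam=\gam_0\sqcup\gam_1$ with both pieces nonempty and no edges between them, and set $V_j=\mathrm{span}\{v_i^*:v_i\in V(\gam_j)\}$ for $j\in\{0,1\}$. Then $V=V_0\oplus V_1$ is a nontrivial decomposition, and by the observation above every basis element of $V_0$ cups trivially with every basis element of $V_1$; by bilinearity $q(v_0,v_1)=0$ for all $v_0\in V_0$, $v_1\in V_1$. Hence $(V,W,q)$ fails to be pairing--connected.

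For the converse I would prove: if $\gam$ is connected, then $(V,W,q)$ is pairing--connected. Suppose $V=V_0\oplus V_1$ is an arbitrary nontrivial direct sum decomposition and, for contradiction, that $q(V_0,V_1)=0$. The subtlety — and the step I expect to be the main obstacle — is that $V_0$ and $V_1$ need not be ``coordinate subspaces'' adapted to the vertex basis, so one cannot immediately read off a graph splitting. To handle this I would argue that $q$ induces, for each $v_i\in V(\gam)$, a linear functional $q(v_i^*,-)$ on $V$ whose kernel is the span of $\{v_j^*:v_j\notin \st(v_i)\}$ together with $v_i^*$ itself (using that $v_i^*\cup v_i^*=0$ by anti/graded-commutativity of cup product on $H^1$, hence $2v_i^*\cup v_i^*=0$, and one arranges the characteristic-$2$ case or simply works over a field where this is the relevant nondegeneracy statement — more robustly, use that the images $v_i^*\cup v_j^*$ for $\{v_i,v_j\}\in E(\gam)$ are linearly independent in $W$). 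The key point is the following nondegeneracy-type claim: if $0\neq x\in V$ and $q(x,y)=0$ for all $y$ in a subspace $U$, then there is a concrete constraint linking $x$ and $U$ back to the adjacency structure of $\gam$. Combining this with connectivity of $\gam$ — which guarantees that the only partition of $V(\gam)$ with no crossing edges is the trivial one — I would derive that no such decomposition with $q(V_0,V_1)=0$ can exist: pick $0\neq v_0\in V_0$, expand it in the vertex basis, let $S$ be its support; since $\gam$ is connected there is an edge from $S$ to its complement, which produces, for a suitable vertex $v_j$ outside $S$ adjacent to some vertex in $S$, a nonzero value $q(v_0,v_j^*)\in W$; writing $v_j^*=a_0+a_1$ with $a_i\in V_i$, nonvanishing of $q(v_0,v_j^*)=q(v_0,a_1)$ (as $q(v_0,a_0)$ lies in $q(V_0,V_0)$, handled by a parity/support bookkeeping argument that the contribution to the ``new'' basis vector $e_\ell^*$ comes only from the cross term) forces $q(V_0,V_1)\neq 0$, the desired contradiction. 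The bulk of the writing will be this support bookkeeping, i.e.\ checking that a particular coordinate $e_\ell^*$ of $q(v_0,v_j^*)$ is genuinely nonzero; everything else is immediate from Proposition~\ref{prop:coho}.
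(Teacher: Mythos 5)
The easy direction (disconnected implies not pairing--connected) is fine and matches the paper. The converse, however, has a genuine gap at exactly the point you flag as ``parity/support bookkeeping.'' Having picked $v_0\in V_0$ with support $S$ and a vertex $v_j\notin S$ adjacent to some $v_i\in S$, you write $v_j^*=a_0+a_1$ and assert that the $e_\ell^*$--coordinate of $q(v_0,v_j^*)$ ``comes only from the cross term'' $q(v_0,a_1)$. This is false in general. Writing $a_0=\sum\gamma_k v_k^*$ and $a_1=\sum\delta_k v_k^*$ with $\gamma_j+\delta_j=1$, and using that $\alpha_j=0$ since $v_j\notin S$, the $e_\ell^*$--coordinate of $q(v_0,a_0)$ is $\pm\alpha_i\gamma_j$ and that of $q(v_0,a_1)$ is $\pm\alpha_i\delta_j$; nothing forces $\delta_j\neq0$. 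In the extreme case $v_j^*\in V_0$ one has $a_1=0$ and your chain collapses entirely: the assumption $q(V_0,V_1)=0$ then just gives $q(v_0,v_j^*)=q(v_0,a_0)\in q(V_0,V_0)$, which is no contradiction. A concrete instance: $\gam$ the path $a-b-c$, $V_0=\mathrm{span}(a^*,b^*)$, $V_1=\mathrm{span}(c^*)$, $v_0=a^*$, $v_j=b$. Here $b^*\in V_0$, so your single step produces nothing, even though $q(V_0,V_1)\neq0$ (via $q(b^*,c^*)$). You also never treat the case $S=V(\gam)$, where there is no $v_j$ to choose.

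What is missing is an iteration that sweeps through all of $\gam$, which is what the paper's proof does. It fixes a walk $x_1,\ldots,x_m$ that visits every vertex with consecutive vertices adjacent, observes that $q(w_0,w_1)=0$ forces $\alpha_i\beta_j=\alpha_j\beta_i$ for each edge $\{v_i,v_j\}$ (and, after a perturbation argument within $V_0$ and $V_1$, that both products vanish), and then propagates along the walk: starting from a $w_0\in V_0$ with $\alpha_{x_1}\neq0$, it concludes $\beta_{x_2}=0$ for every $w_1\in V_1$; since $V_0\oplus V_1=V$, this forces some vector of $V_0$ to have nonzero $x_2^*$--coefficient, and the argument repeats along the walk until $V_1$ is shown to be zero. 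Your ``nondegeneracy-type claim'' and the appeal to ``the only partition of $V(\gam)$ with no crossing edges is trivial'' gesture at this, but the one-step version you wrote down does not establish it; you need to carry the induction through the spanning walk.
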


Connectedness of $\gam$ has another, simpler characterization in terms of $A(\gam)$, as we shall
indicated below; namely, $\gam$ is connected if and only if $A(\gam)$ is freely indecomposable; see Theorem~\ref{thm:free-prod}.
The (mostly complete) proof of Proposition~\ref{prop:pairing-conn} will illustrate the principle that many results that related the algebra of
$A(\gam)$ with the combinatorics of $\gam$ have an easy direction and a less easy direction.

\begin{proof}[Proof of Proposition~\ref{prop:pairing-conn}]
Suppose first that $(V,W,q)$ is pairing-connected, and let $\gam=\gam_0\cup \gam_1$ be a purported separation of $\gam$. Let
$V_i$ denote the span of the vertices $\{v_j^*\mid v_j\in V(\gam_i)\}$ for $i\in\{0,1\}$. Pairing connectedness implies that there are
vectors $w_i\in V_i$ such that $q(w_0,w_1)\neq 0$. Writing $w_0$ and $w_1$ in terms of the preferred basis vectors, we see that
there are vertices $x_i\in V(\gam_i)$ such that $q(x_0^*,x_1^*)\neq 0$, which implies that $\{x_0,x_1\}\in E(\gam)$ by Proposition
~\ref{prop:coho}, a contradiction.

Suppose conversely that $\gam$ is connected, and let $V\cong V_0\oplus V_1$ be a nontrivial direct sum decomposition that witnesses
the failure of $(V,W,q)$ to be pairing-connected. Let $\{x_1,\ldots,x_m\}$ be a sequence of vertices of $\gam$ such that every vertex
of $\gam$ appears on this list, and such that for all suitable $i$ we have $\{x_i,x_{i+1}\}\in E(\gam)$. We allow this list to have repeats.

Let \[w_0=\sum_{i=1}^n \alpha_i v_i^*\in V_0,\, w_1=\sum_{i=1}^n\beta_i v_i^*\in V_1\] be expressions for nonzero vectors with respect
to the standard dual basis for $V$. If $\{v_i,v_j\}\in E(\gam)$ then the expression $q(w_0,w_1)=0$ implies that $\alpha_i\beta_j=\alpha_j
\beta_i$. The two sides of this last equation are either both zero or both nonzero, and in the latter case we have that the pairs
$(\alpha_i,\alpha_j)$ and $(\beta_i,\beta_j)$ are proportional.
In this case, since $\{v_1^*,\ldots,v_n^*\}$ is a basis for $V$, we may perturb
$w_0$ or $w_1$ within the respective vector spaces $V_0$ and $V_1$ in order to obtain vectors for which the coefficients corresponding
to $v_i^*$ and $v_j^*$ are not proportional. Thus, the condition $q(w_0,w_1)$ implies that \[\alpha_i\beta_j=\alpha_j
\beta_i=0.\]

With these observations, we can complete the proof. Let $w_0$ be as above. Relabeling if necessary, we have $v_1=x_1$ and $v_2=x_2$.
Without loss of generality, we may assume that $\alpha_1\neq 0$. Now let $w_1\in V_1$ be expressed as above. If $\beta_2\neq 0$ then
$\alpha_1\beta_2\neq 0$, a conclusion that was ruled out by the considerations in the previous paragraph. Thus, $\beta_2=0$, and since
$w_1$ was arbitrary, the coefficient of $x_2^*$ vanishes for all vectors in $V_1$. Then, we may find a vector in $V_0$ whose coefficient
$\alpha_2$ is nonzero, and arguing symmetrically, we see that the coefficient $\beta_1$ is zero for all vectors in $V_1$. By induction on
$m$ and using the fact every vertex of $\gam$ occurs on the list $\{x_1,\ldots,x_m\}$, we see that $V_1$ must be the zero vector space.
This is a contradiction.
\end{proof}

\subsection{The cohomology ring of $A(\gam)$ determines $\gam$}

We are now ready to state and improve a central fact about the cohomology of $A(\gam)$, namely that it determines the isomorphism
type of $\gam$.

\begin{thm}\label{thm:coho-alg}
Let $\gam$ be a finite simplicial graph, let $V=H^1(A(\gam),F)$, let $W=H^2(A(\gam),F)$, and let $q$ be the cup product pairing. Then the
triple $(V,W,q)$ determines $\gam$ up to isomorphism.
\end{thm}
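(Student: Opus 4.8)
The plan is to reconstruct the graph $\gam$ purely from the linear-algebraic data $(V, W, q)$. The vertices of $\gam$ should correspond to a distinguished set of lines (or rank-one subspaces) in $V$, and the edges should be read off from where the pairing $q$ is nonzero on these lines. The chief difficulty is that the preferred basis $\{v_1^*, \ldots, v_n^*\}$ supplied by Proposition~\ref{prop:coho} is not canonical: many bases of $V$ will exhibit the same structural behavior under $q$, so I must characterize intrinsically which decompositions of $V$ arise from the vertex generators. The key notion to exploit is \emph{pairing-connectedness} from Proposition~\ref{prop:pairing-conn}, together with an inductive decomposition of $\gam$ into its connected components and, within a component, a further analysis.

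First I would handle the disconnected case. If $\gam = \gam_1 \sqcup \gam_2$ is a disjoint union, then $A(\gam) = A(\gam_1) * A(\gam_2)$, and on cohomology $V = V_1 \oplus V_2$, $W = W_1 \oplus W_2$, with $q$ vanishing on mixed pairs; conversely, by Proposition~\ref{prop:pairing-conn}, $\gam$ is connected exactly when $(V,W,q)$ is pairing-connected. So the connected components of $\gam$ correspond precisely to the finest direct-sum decomposition $V = \bigoplus V_i$ such that each $(V_i, q(V_i,V_i), q|_{V_i})$ is pairing-connected and $q$ kills all cross terms; this decomposition is canonical (a Krull--Schmidt-type uniqueness argument, using that "pairing-connected" is the analogue of "indecomposable"). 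Thus it suffices to reconstruct a connected graph $\gam$ from $(V,W,q)$.

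For the connected case the plan is to identify the vertices with the lines $F v^* \subset V$ that are \emph{maximal} with respect to the following property: the cup product of $v^*$ with a "generic" element of $V$ should detect as many edges as possible, or more precisely, $v^*$ should be characterized by the rank/support of the linear map $q(v^*, -)\colon V \to W$. Concretely, for each vertex $v$, the map $q(v^*,-)$ has image spanned by $\{e^*_\ell : \ell \text{ an edge at } v\}$, so its rank equals $\deg(v)$, and its kernel is the span of $v^*$ together with all $w^*$ for $w$ not adjacent to $v$. I would show that the lines arising as $F v^*$ are exactly the extreme rays in a suitable combinatorial sense — e.g. those $[x]\in \mathbb{P}(V)$ for which $\ker q(x,-)$ is as large as possible among vectors whose paired-image pattern is "irreducible" — and that a generic vector in $V$ does not have this property. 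This is the step I expect to be the main obstacle: pinning down an intrinsic, basis-free description of the vertex lines. One workable route is to induct on $|V(\gam)|$ by peeling off a vertex: a vertex $v$ that is not a cone point corresponds to a hyperplane decomposition, and deleting $v$ replaces $(V,W,q)$ by the analogous triple for $\gam - v$, whose structure (namely $H^2$ losing exactly the classes $e^*_\ell$ dual to edges at $v$) is detectable from $q$. Cone points (vertices adjacent to all others) are detected as those $v^*$ with $q(v^*,-)$ of maximal rank $n-1$, and can be stripped off separately since $A(\gam) \cong \mathbb{Z} \times A(\gam - v)$ in that case.

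Once the vertex set $V(\gam)$ is recovered as this canonical family of lines $\{L_1,\ldots,L_n\}$ in $V$, the edge set is immediate: $\{L_i, L_j\} \in E(\gam)$ if and only if $q(x_i, x_j) \neq 0$ for nonzero $x_i \in L_i$, $x_j \in L_j$, by part (1) of Proposition~\ref{prop:coho}. One should also check the reconstruction is well-defined independent of the field $F$ (or simply fix $F$ throughout), and that an isomorphism of triples $(V,W,q)\cong(V',W',q')$ carries the canonical vertex-line family to the canonical vertex-line family and hence induces a graph isomorphism $\gam \cong \gam'$, giving the stated rigidity. The only genuinely delicate point, as noted, is the intrinsic characterization of the vertex lines; everything downstream is formal.
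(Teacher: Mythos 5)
Your overall framework is close in spirit to the paper's actual proof: both proceed by induction on the number of vertices, both peel off vertices by examining the rank of the linear map $f_v = q(v,-)$, and both recover the edge structure from nonvanishing of $q$ on pairs of dual classes. However, there is a genuine gap at the step you yourself flag as the ``main obstacle,'' and the gap is not merely technical: the plan of identifying a \emph{canonical set of lines} $\{F v^*\}$ in $V$ cannot work in general, because such a set does not exist.

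The problem is that if two vertices $x_1, x_2$ of $\gam$ have the same link (after removing each other), then $x_1^*$ and $x_2^*$ play completely symmetric roles, and in fact any linear combination $\alpha x_1^* + \beta x_2^*$ has the same rank for $f$ as either of them. (The extreme cases: two isolated vertices give $V = F^2$, $W = 0$, $q = 0$; two vertices joined by an edge give $V = F^2$, $W = F$, $q$ nondegenerate. In both, no two lines in $V$ are distinguishable.) So the lines $F x_i^*$ are \emph{not} intrinsic to $(V,W,q)$. What is intrinsic is the subspace they span. The paper's proof recognizes this: it takes $v$ with $f_v$ of \emph{minimal} (not maximal) rank, shows that if $v$ is supported on several vertex duals $x_1, \ldots, x_m$ then those vertices all have the same degree and form either a clique or a totally disconnected set with identical links, and then constructs a canonical subspace $V_{\min}$ spanned by the whole bundle $\{x_1^*, \ldots, x_\ell^*\}$. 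The bundle is treated as a single unit in the induction. Your proposed ``extreme ray'' or ``cone point via maximal rank'' characterizations inherit exactly the same obstruction: multiple cone points of equal degree again span a subspace, not a canonical line.

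A second issue, smaller but still real, is the reattachment step. You propose to peel off a vertex $v$, reconstruct $\gam - v$ by induction, and then reattach $v$ using $\lk(v)$. But to reattach you need not just $\lk(v)$ as an abstract graph, you need the \emph{embedding} of $\lk(v)$ into the reconstructed copy of $\gam - v$; and the induction hypothesis as you state it (recover each graph up to abstract isomorphism) does not supply that. The paper handles this by proving a strictly stronger statement: the induction is over pairs $(\Lambda, \gam)$ with $\Lambda \subset \gam$ and an injection of triples $i_{\Lambda,\gam}$, and the claim is that the data determines both graphs together with the injection. This is what makes the reattachment well-defined. Without strengthening the induction hypothesis in this way, your inductive step has a hole. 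Your preliminary reduction to the connected case via pairing-connectedness is reasonable and in the same spirit as Proposition~\ref{prop:pairing-conn}, though the paper does not need it as a separate step since the minimal-rank argument automatically strips isolated vertices first.
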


One essential point in Theorem~\ref{thm:coho-alg} is that the triple $(V,W,q)$ is considered abstractly, without any further data such as bases.
Before giving a proof of Theorem~\ref{thm:coho-alg}, we can make several observations about special instances of the result. First,
the dimension of $V=H^1(A(\gam),F)$ coincides with $|V(\gam)|$, and the dimension of $W=H^2(A(\gam),F)$ coincides with $|E(\gam)|$, as is
immediate from Proposition~\ref{prop:coho}. Moreover, the first and second cohomology of $A(\gam)$ together with the cup product pairing
identify complete graphs. To see this, it is convenient to introduce a map $V\longrightarrow\Hom(V,W)$,
defined by $v\mapsto f_v$, and where $f_v(v')=v\cup v'$. The graph $\gam$ is complete if and only if for all $v\in H^1(A(\gam),F)$, the
rank of the image of $f_v$ is $\dim H^1(A(\gam),F)-1$. We leave the verification of this last claim as a straightforward exercise for the
reader.

The fact that $A(\gam)$ determines the graph $\gam$ uniquely is well-known. See~\cite{Droms87,Sabalka09,Koberda2012} for various
perspectives. The proof offered here that gives uniqueness of $\gam$ via the cohomology algebra of $A(\gam)$ fits into the theory
of \emph{cohomological uniqueness}\index{cohomological uniqueness}. In the context of cohomological uniqueness, one is often
concerned with the question of whether or not a particular space (often decorated with adjectives such as $p$--completeness, where
$p$ is a prime) is determined up to homotopy equivalence by its cohomology (with various groups of coefficients). In our setting,
Theorem~\ref{thm:coho-alg} implies that among Salvetti complexes associated to finite simplicial graphs,
the integral (or rational) cohomology of
the space determines the space up to homotopy equivalence, and its defining graph up to isomorphism. Moreover, only
the ring structure on the cohomology algebra is required, and only in degrees one and two. The reader is directed
to~\cite{viruel98,viruel01,drv13,rv13} for a more detailed discussion of cohomological uniqueness.

\begin{proof}[Proof of Theorem~\ref{thm:coho-alg}]
We will actually prove a stronger statement. Suppose $\gam\longrightarrow\Lambda$ is obtained by deleting vertices
(so that $\Lambda$ is a subgraph of $\gam$), with an induced retraction
$A(\gam)\longrightarrow A(\Lambda)$ defined by sending the vertices $V(\gam)\setminus V(\Lambda)$ to the identity.
Thus, we obtain triples $(V_{\gam},W_{\gam},q_{\gam})$ and 
$(V_{\Lambda},W_{\Lambda},q_{\Lambda})$
corresponding to the cohomologies of these groups, and a map of triples
\[i_{\Lambda,\gam}\colon (V_{\Lambda},W_{\Lambda},q_{\Lambda})\longrightarrow (V_{\gam},W_{\gam},q_{\gam}),\]
which is injective on the level of vector spaces, and $q_{\Lambda}$ is extended by $q_{\gam}$.

{\bf Claim:} The triple
\[\{(V_{\Lambda},W_{\Lambda},q_{\Lambda}), (V_{\gam},W_{\gam},q_{\gam}),i_{\Lambda,\gam}\}\] uniquely determines graphs
$\Lambda$ and $\gam$, together with an injection of graphs $\Lambda\longrightarrow\gam$.
The theorem will then follow from the special case where
$V(\Lambda)=\varnothing$.

We proceed by induction on $(|V(\gam)|,|V(\Lambda)|)$, ordered lexicographically, the cases where
$|V(\gam)|\in\{1,2\}$ being easy consequences of the remarks preceding the proof.
We now suppose the claim has been established for all graphs with at most $n$ vertices, and we suppose that $\gam$ has $n+1$ vertices.
We consider the (possibly trivial) subspace \[V_0\subset V=H^1(A(\gam),F)\] spanned by vectors for which $f_v$ has rank zero. It is
immediate from Proposition~\ref{prop:coho} that a vector $w\in V_0$ is in the span on vectors dual to vertices of degree zero in $\gam$.
The quotient $V/V_0$ is isomorphic to $H^1(A(\gam'),F)$, where $\gam'$ is the result of deleting all the vertices of $\gam$ that have
degree zero.

The natural map $A(\gam)\longrightarrow A(\gam')$ given by sending isolated vertices to the identity
induces a map $H^1(A(\gam',F)\longrightarrow H^1(A(\gam),F)$, which identifies
$V/V_0$ with a subspace of $V$. The cup product 
on $H^1(A(\gam),F)$ restricts to \[q\colon H^1(A(\gam',F)\times H^1(A(\gam',F)\longrightarrow W.\]
Thus, if $V_0\neq 0$ then $\gam'$ satisfies the conclusion of the claim by induction, and
$\gam$ is obtained from $\gam'$ by adding $\dim V_0$ many isolated vertices. We may therefore assume that $\gam$ has no
isolated vertices.

We now consider a vector $v\in V$ such that the rank of $f_v$ is minimized.

{\bf Case 1:}
Suppose first that the linear span $U$ of $v$
coincides with the span of a vector dual to a vertex $x$ of $\gam$, as furnished by Proposition~\ref{prop:coho}. Then $V/U$ coincides
with the first cohomology of $A(\gam')$, where $\gam'$ is obtained from $\gam$ by deleting $x$. The map $V\longrightarrow V/U$ is
induced by the inclusion $\gam'\longrightarrow \gam$ and the corresponding injection $A(\gam')\longrightarrow A(\gam)$.

Writing $Z$ for the image of
$f_v$, we have that $W/Z$ coincides with the second cohomology of $A(\gam')$, and the cup product pairing descends to a
bilinear map \[\overline{q}\colon V/U\times V/U\longrightarrow W/Z,\] which coincides with the cup product
pairing on the cohomology of $A(\gam')$. By induction,
the triple $(V/U,W/Z,\overline{q})$ determines $\gam'$ uniquely.

Let $N\subset V$ be the kernel of $f_v$. Then $N$ is spanned by the dual vector $v$ associated to the vertex $x$ and
 the duals of the vertices which are not adjacent of $x$.
If $N=U$ then $\gam$ is the join of $x$ and $\gam'$. If not, then we pass to the quotient $V/N$, which coincides with $H^1(A(\lk(x)),F)$.
Again, the map $V\longrightarrow V/N$ is induced by the inclusion of $A(\lk(x))\longrightarrow A(\gam)$.
Passing to a suitable quotient $W/Y$ of $W$ as above, we can recover the cup product pairing on the cohomology of
$A(\lk(x))$, and thus recover
$\lk(x)$, by induction.
Finally, we use the full strength of the induction hypothesis to obtain an injection $i_x\colon \lk(x)\longrightarrow\gam'$.
The graph $\gam$ is now reconstructed by attaching $x$ to each vertex in the image of $i_x$.

To complete the induction, let
\[\{(V_{\Lambda},W_{\Lambda},q_{\Lambda}), (V_{\gam},W_{\gam},q_{\gam}),i_{\Lambda,\gam}\}\] be a triple satisfying the hypotheses
of the claim. We quotient out the degree one part of the cohomology $V_{\Lambda}$ and $V_{\gam}$ by $U$, and the map
$i_{\Lambda,\gam}$ descends to the quotients by hypothesis. By induction, we obtain an injection of graphs $\Lambda'\longrightarrow\gam'$,
where the primed graphs are obtained by deleting the vertex $x$. The links of $x$ in $\gam'$ and $\Lambda'$ can be determined
as above, whence we can reconstruct $\gam$.

{\bf Case 2:}
Suppose that $v\in V$ is arbitrary such that the rank $k$ of $f_v$ is minimized, and suppose that $v$ is supported on
the duals of two or more vertices, so that \[v=\sum_{i=1}^m \alpha_ix_i^*,\] where all indices have nonzero coefficients and $m\geq 2$.
It is clear that for all $i$, the degree of $x_i$ must coincide with $k$, by an easy application of
Proposition~\ref{prop:coho}. Consider the vertices $x_1$ and $x_2$. Observe that Proposition~\ref{prop:coho}
again implies that there cannot be a vertex that is distinct from both $x_1$ and $x_2$ and
that is adjacent to $x_1$ but not to $x_2$. Thus, every vertex that is adjacent to $x_1$
and distinct from $x_2$ is
also adjacent to $x_2$. By symmetry, the same statement holds after switching the roles of $x_1$ and $x_2$.
The argument now bifurcates into two subcases, according to whether $x_1$ and $x_2$ are adjacent or not.

{\bf Subcase 1:} Suppose first that $x_1$ and $x_2$ are adjacent, and suppose that $m\geq 3$. Suppose that
$x_3$ is not adjacent to $x_1$. Then since the degrees of $x_2$ and $x_3$ are the same and coincide with the rank $k$ of
$f_v$, we have that \[|\lk(x_2)\cup\lk(x_3)|\geq k+1.\] This violates the minimality of the choice of $v$,
since then Proposition~\ref{prop:coho} implies
that the rank of $f_v$ is at least $k+1$. It follows that $x_1$ and $x_3$ are adjacent, and by symmetry we have that $x_2$ and $x_3$
are adjacent. By a straightforward induction, we have that $\{x_1,\ldots,x_m\}$ form a clique, and
for all $i$, a vertex $y\notin \{x_1,\ldots,x_m\}$ adjacent to
$x_i$ is adjacent to all the vertices $\{x_1,\ldots,x_m\}$. Observe that if \[v'=\sum_{i=1}^m \beta_ix_i^*\] is another linear combination
of dual vectors, then nonzero linear combinations $w$ of $v$ and $v'$ also satisfy that the rank of $f_w$ is equal to $k$. We set
$V_{\min}$ to be a maximal vector subspace of $V$ that contains $v$ and such that for all $0\neq w\in V_{\min}$, the rank of $f_w$
is equal to $k$.

It is straightforward now to show that $V_{\min}$ is generated by $\{x_1^*,\ldots,x_{\ell}^*\}$, where
$\{x_1,\ldots,x_{\ell}\}$ form an $\ell$-clique such that \[\lk(x_i)\setminus\{x_j\}=\lk(x_j)\setminus\{x_i\}\] for all $i$ and $j$.

We may now proceed as in Case 1 above, treating $\{x_1,\ldots,x_{\ell}\}$ as a single vertex, and replacing the subspace $U$ by the
subspace $V_{\min}$.

{\bf Subcase 2:} We now have that $x_1$ and $x_2$ are not adjacent. If $m\geq 3$, then the argument in Subcase 1 above implies
that $x_3$ is adjacent to neither $x_1$ nor $x_2$. We thus conclude that $\{x_1,\ldots,x_m\}$ form a totally disconnected subgraph
of $\gam$, and $\lk(x_i)=\lk(x_j)$ for all $i$ and $j$. We construct a vector space $V_{\min}$ as in Subcase 1 and conclude that
it is generated by $\{x_1^*,\ldots,x_{\ell}^*\}$, where $\{x_1,\ldots,x_{\ell}\}$ form a totally disconnected graph and such that the
links of any two vertices on this list coincide. We again reduce to Case 1.
\end{proof}

Some remarks about Theorem~\ref{thm:coho-alg} are in order. For one, one need only consider the degree one and degree two parts of the
cohomology and not the full cohomology algebra, and this is not surprising since a graph is determined by its vertices and its edges,
and a graph determines the corresponding right-angled Artin group. Second, in Case 2 of the proof, the vertices
$\{x_1,\ldots,x_{\ell}\}$ are indistinguishable from each other, in the sense of graph automorphisms. That is, every permutation of
$\{x_1,\ldots,x_{\ell}\}$ is realized by a graph automorphism of $\gam$, and therefore it is reasonable that one can treat this
collection of vertices as a single vertex. Moreover, in the two subcases, $\{x_1,\ldots,x_{\ell}\}$ generates either an abelian or a free
subgroup of $A(\gam)$. The full group of automorphisms of $\bZ^{\ell}$ or of $F_{\ell}$ embeds in the group $\Aut(A(\gam))$ (cf.~
Subsection~\ref{ss:aut}
below).
Finally, in the proofs of Subcases 1 and 2, we obtain a vector space $V_{\min}$, which either comes from a clique or a totally disconnected
subgraph. These two cases can be checked linear algebraically by whether the cup product pairing is trivial or not on $V_{\min}$.

\section{Translating between group theory and combinatorics}\label{sec:comb}

In this section, we will describe some of the results and ideas that go into translation between the algebraic structure of $A(\gam)$ and
the combinatorics of $\gam$. As we have remarked already, the abstract structure of $A(\gam)$ determines completely the
nature of $\gam$, passing perhaps through cohomology (Theorem~\ref{thm:coho-alg}). We will seek clean, definitive results characterizing
aspects of the combinatorial structure of $\gam$ in terms of the algebra of $A(\gam)$. In the process,
we will gain insight into both structures.

\subsection{Elementary properties}

We begin with some of the first properties of graphs, and how these properties are reflected in $A(\gam)$. In 
Proposition~\ref{prop:pairing-conn}, we have that pairing-connectedness of the triple $(V,W,q)$ characterizes the connectedness of
$\gam$. One can characterize the connectedness of $\gam$ and its complement directly from the group theory of $A(\gam)$, without
reference to the cohomology algebra, as follows.

\begin{thm}[\cite{Servatius1989}]\label{thm:join}
The group $A(\gam)$ splits as a nontrivial direct product if and only if $\gam$ splits as a nontrivial join.
\end{thm}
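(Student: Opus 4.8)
The plan is to prove the easy implication straight from the presentation and the hard implication by combining the K\"unneth formula with the description of the cup product furnished by Proposition~\ref{prop:coho}. For the easy direction, if $\gam=\gam_1*\gam_2$ is a nontrivial join then every vertex of $\gam_1$ commutes with every vertex of $\gam_2$ by the defining relations, so the retract subgroups $A(\gam_1),A(\gam_2)$ of Proposition~\ref{prop:retraction} commute elementwise, together generate $A(\gam)$, and intersect trivially (apply the two retractions); hence $A(\gam)=A(\gam_1)\times A(\gam_2)$, a nontrivial product since both $\gam_i$ are nonempty.

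For the hard direction, suppose $A(\gam)=G_1\times G_2$ with $G_1,G_2$ nontrivial; the cases $|V(\gam)|\le 2$ are immediate by inspection, so assume $|V(\gam)|\ge 3$. Work over $F=\bQ$ and put $V_i=H^1(G_i,\bQ)$. Each $G_i$ is a nontrivial subgroup of the residually torsion-free nilpotent group $A(\gam)$, hence has nontrivial abelianization, so $V_i\ne 0$. The K\"unneth formula gives $V:=H^1(A(\gam),\bQ)=V_1\oplus V_2$ together with a direct-sum decomposition $W:=H^2(A(\gam),\bQ)=H^2(G_1,\bQ)\oplus(V_1\otimes V_2)\oplus H^2(G_2,\bQ)$ in which the cup product restricts to the cup products of $G_1$ and $G_2$ on $V_1\times V_1$ and $V_2\times V_2$, while $a\cup b=a\otimes b\in V_1\otimes V_2$ for $a\in V_1$, $b\in V_2$; in particular $a\cup b\ne 0$ whenever $a\ne 0\ne b$.

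Now I would bring in the vertex-dual basis of Proposition~\ref{prop:coho}. For a vertex $u$ write $v_u^*=a_u+b_u$ with $a_u\in V_1$, $b_u\in V_2$, and set $C=\{u:a_u\ne 0\text{ and }b_u\ne 0\}$. For a non-edge $\{u,w\}$ of $\gam$ we have $v_u^*\cup v_w^*=0$; comparing K\"unneth components gives $a_u\cup a_w=0$, $b_u\cup b_w=0$, and the crucial identity $a_u\otimes b_w=a_w\otimes b_u$ in $V_1\otimes V_2$. I would then split into three cases. If $C=\emptyset$, then $A=\{u:v_u^*\in V_1\}$ and $B=\{u:v_u^*\in V_2\}$ partition $V(\gam)$, are both nonempty because $V_i\ne 0$, and for $u\in A$, $w\in B$ we have $v_u^*\cup v_w^*\ne 0$, so $\{u,w\}\in E(\gam)$ by Proposition~\ref{prop:coho}; thus $\gam$ is the nontrivial join of its full subgraphs on $A$ and on $B$. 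If $\emptyset\ne C\ne V(\gam)$, applying the crucial identity to a non-edge with one endpoint in $C$ and using the vanishing rules for simple tensors shows that $C$ is closed under non-adjacency, i.e.\ is a nonempty proper union of connected components of the complement graph $\gam^c$; hence $\gam^c$ is disconnected and $\gam$ is a nontrivial join. If $C=V(\gam)$, then all $a_u,b_u$ are nonzero, so each identity $a_u\otimes b_w=a_w\otimes b_u$ is an equality of nonzero simple tensors, forcing $a_u,a_w$ proportional (and $b_u,b_w$ proportional) for every non-edge $\{u,w\}$; if $\gam^c$ is disconnected we conclude as before, and if $\gam^c$ is connected, transitivity of proportionality forces $\dim V_1=\dim V_2=1$, so $|V(\gam)|=2$, contradicting the reduction. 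In every case $\gam$ is a nontrivial join.

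The main obstacle is exactly this last step: passing from the abstract K\"unneth splitting $V=V_1\oplus V_2$ to a genuine join decomposition of $\gam$, the difficulty being that the splitting need not be compatible with the vertex-dual basis, so one must exploit the vanishing relations at non-edges to show that the ``mixed'' vertices in $C$ form a union of complement-components. I should note that this is a cohomological repackaging of Servatius's original argument, which runs through his Centralizer Theorem; alternatively one can argue geometrically, deducing a product decomposition of the $\catz$ cube complex $\widetilde{\mS(\gam)}$ from the product action of $G_1\times G_2$ and reading off the join structure of $\gam$ from the resulting splitting of its hyperplanes.
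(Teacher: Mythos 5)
Your proposal is correct, and it takes a genuinely different route from the paper. The paper dispatches the hard direction in a few lines by invoking Servatius' Centralizer Theorem (Theorem~\ref{thm:cent}): in a nontrivial direct product of torsion-free groups every nontrivial element has $\bZ^2$ in its centralizer, so the cyclically reduced element $w=v_1\cdots v_n$ with $\supp(w)=V(\gam)$ and $\lk(\supp(w))=\emptyset$ forces, by that theorem, $\gam$ to split as a join. Your argument instead works entirely on the cohomology side, combining the K\"unneth decomposition of $H^{\le 2}(G_1\times G_2,\bQ)$ with the vertex-dual basis and the non-edge vanishing rule of Proposition~\ref{prop:coho}, and then does a clean case analysis on the ``mixed'' set $C$. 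I checked the key identity carefully: for a non-edge $\{u,w\}$, the $V_1\otimes V_2$-component of $v_u^*\cup v_w^*$ is $a_u\otimes b_w - a_w\otimes b_u$ (graded commutativity supplies the sign), so its vanishing gives $a_u\otimes b_w=a_w\otimes b_u$, and your three cases then go through exactly as claimed; in the last case the conclusion should be phrased as $\dim V\le 2$ (forcing $|V(\gam)|\le 2$) rather than $\dim V_1=\dim V_2=1$, but the contradiction with $|V(\gam)|\ge 3$ is the same. What each approach buys: the paper's is shorter but rests on the Centralizer Theorem as a black box; yours is more linear-algebraic and self-contained given Proposition~\ref{prop:coho}, and it fits the paper's broader theme of encoding combinatorial properties of $\gam$ in the triple $(V,W,q)$ — indeed it essentially re-derives the pairing-connectedness criterion of Proposition~\ref{prop:pairing-conn} applied to the complement graph.

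One step deserves a touch more care than you give it. You assert that $V_i=H^1(G_i,\bQ)\ne 0$ because $A(\gam)$ is residually torsion-free nilpotent; but residual nilpotence alone only rules out $G_i$ being perfect, i.e.\ gives $G_i^{\mathrm{ab}}\ne 0$, not that this abelianization has a free part. The quickest patch is to observe that each $G_i$ is a retract of the finitely generated group $A(\gam)$, hence finitely generated, and that $G_1^{\mathrm{ab}}\times G_2^{\mathrm{ab}}=A(\gam)^{\mathrm{ab}}\cong\bZ^{|V(\gam)|}$ is torsion-free, so each $G_i^{\mathrm{ab}}$ is a nonzero torsion-free (hence infinite) abelian group, giving $V_i\ne 0$. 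Alternatively, a nontrivial finitely generated torsion-free nilpotent quotient of $G_i$ has infinite abelianization. Either way the gap closes, but as written the inference is too quick.
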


Recall that a graph $\gam$ splits as a nontrivial join if and only if the complement of $\gam$ is disconnected. Dually, we have the following
fact:

\begin{thm}[\cite{BradMei01}]\label{thm:free-prod}
The group $A(\gam)$ splits as a nontrivial free product if and only if $\gam$ is disconnected.
\end{thm}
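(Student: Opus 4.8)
The plan is to prove the two implications separately: one is a one-line consequence of the defining presentation, and the converse rests on Bass--Serre theory.

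\emph{Suppose $\gam$ is disconnected.} Write $V(\gam)=V_1\sqcup V_2$ with both parts nonempty and no edge of $\gam$ joining $V_1$ to $V_2$, and let $\gam_i$ be the full subgraph spanned by $V_i$. The presentation of $A(\gam)$ then splits into the generators and relators of $A(\gam_1)$, those of $A(\gam_2)$, and nothing else, so $A(\gam)\cong A(\gam_1)*A(\gam_2)$; since each factor is nontrivial, this is a nontrivial free product.

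\emph{Suppose $A(\gam)=G_1*G_2$ nontrivially.} If $\gam$ has at most one vertex then $A(\gam)$ is trivial or infinite cyclic, neither of which splits nontrivially, so $\gam$ has at least two vertices; I will assume $\gam$ is connected and derive a contradiction. Let $T$ be the Bass--Serre tree of the splitting. Then $A(\gam)$ acts on $T$ with trivial edge stabilizers, with each vertex stabilizer conjugate to $G_1$ or $G_2$, and with no global fixed vertex (a global fixed vertex would put $A(\gam)$ inside a proper subgroup). I will show, to the contrary, that $A(\gam)$ fixes a vertex, via the following steps.
\begin{enumerate}
\item \emph{Every vertex generator $v$ is elliptic.} Since $\gam$ is connected with at least two vertices, $v$ has a neighbour $w$, so $\langle v,w\rangle\cong\bZ^2$. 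A copy of $\bZ^2$ acting on a simplicial tree with trivial edge stabilizers must fix a vertex: otherwise, restricting to its minimal invariant subtree and applying the structure theory of groups acting on trees, it would decompose as a nontrivial free product or be infinite cyclic, and $\bZ^2$ is neither. (Concretely: if some $a\in\bZ^2$ were hyperbolic, every element of $\bZ^2$ would preserve its axis, forcing $\bZ^2$ to embed in $\Isom(\bR)$; and if $a,b$ were elliptic with disjoint fixed sets then $ab$ would be hyperbolic yet commute with the elliptic $a$, which is impossible when edge stabilizers are trivial.) Hence $v$ fixes a vertex.
\item \emph{Each vertex generator fixes a unique vertex $p_v$.} If $v\neq 1$ fixed two vertices it would fix the geodesic between them, hence an edge, contradicting triviality of edge stabilizers; and $v$ cannot invert an edge, since then $v^2=1$ while $v$ has infinite order.
\item \emph{Adjacent generators share their fixed vertex.} If $\{v,w\}\in E(\gam)$, then $\langle v,w\rangle$ fixes some vertex $q$ by Step 1, so $p_v=q=p_w$.
\item \emph{Conclusion.} Connectedness of $\gam$ propagates Step 3 along edge-paths, so all $p_v$ equal a single vertex $p$. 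Then every vertex generator fixes $p$, hence $A(\gam)$ fixes $p$, contradicting the absence of a global fixed vertex.
\end{enumerate}

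The one genuinely nontrivial ingredient is Step 1 --- that $\bZ^2$ acting on a tree with trivial edge stabilizers is elliptic --- which is where Serre's theory of groups acting on trees is really used. Everything else is bookkeeping with fixed-point sets of elliptic elements, together with the observation that connectedness of $\gam$ forces these singleton fixed-point sets to agree globally; alternatively, one can package Steps 1--3 as the single assertion that the set of vertex generators of $A(\gam)$ generates an elliptic subgroup.
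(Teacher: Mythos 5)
Your proof is correct, but it takes a genuinely different route from the paper's. The paper invokes Stallings' theorem: since $A(\gam)$ is torsion-free, it splits as a nontrivial free product if and only if it has infinitely many ends, and then argues by induction on the graph, using the amalgamated decomposition $A(\gam)=A(\Lambda)*_{A(\lk(v))}A(\st(v))$, that a connected graph always yields a group with at most two ends. Your argument instead works directly with the Bass--Serre tree of a hypothetical splitting and shows that the subgroup generated by the vertex generators must be elliptic: each generator sits inside a $\bZ^2$ (because $\gam$ is connected with $\geq 2$ vertices), a $\bZ^2$ acting on a tree with trivial edge stabilizers has a fixed vertex, and connectedness of $\gam$ propagates these fixed vertices to a single global fixed point, a contradiction. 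This is cleaner in that it avoids Stallings' ends theorem entirely and needs only the basic structure theory of groups acting on trees (a group acting on a tree with trivial edge stabilizers and no global fixed point is a nontrivial free product or infinite cyclic); the paper's approach, by contrast, slots more naturally into the inductive graph-of-groups picture that it develops for other purposes.

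One small imprecision in your parenthetical for Step 1: a hyperbolic element's axis being preserved by $\bZ^2$ does not quite force an embedding into $\Isom(\bR)$ as a contradiction, since $\bZ^2$ does embed abstractly into $\bR\subset\Isom(\bR)$ (e.g.\ as $\bZ+\bZ\sqrt2$). What you actually need is that the action on the axis is \emph{simplicial}, so the map lands in the infinite dihedral group $\bZ\rtimes\bZ/2\bZ$, which is virtually cyclic; then either $\bZ^2$ embeds in a virtually cyclic group (impossible) or the kernel fixes the axis pointwise, violating triviality of edge stabilizers. Your alternative framing --- that the minimal invariant subtree would exhibit $\bZ^2$ as a free product or as $\bZ$ --- is the cleaner way to state the key point and stands on its own.
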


Both Theorem~\ref{thm:join} and Theorem~\ref{thm:free-prod} are easy in one direction. If $\gam$ is disconnected, then
$A(\gam)$ admits a presentation of the form \[A(\gam)=\langle V(\gam_1)\cup V(\gam_2)\mid \mathrm{R}_1\cup \mathrm{R}_2\rangle,\]
where $\gam_1$ and $\gam_2$ are nonempty and disjoint subgraphs of $\gam$, and where $\mathrm{R}_i$ only contains generators
from $\gam_i$ for $i\in\{1,2\}$. It follows then immediately that $A(\gam)\cong A(\gam_1)* A(\gam_2)$.

If $\gam$ splits as a join $\gam_1*\gam_2$, then every vertex of $\gam_1$ is adjacent to every vertex of $\gam_2$. We have that
$A(\gam_1)$ and $A(\gam_2)$ are subgroups of $A(\gam)$, and together generate the whole group. Moreover, they normalize each
other and have trivial intersection (this last point is not completely trivial and requires some argument if one wishes to be pedantic, but
we shall sweep it under the rug). It follows that $A(\gam_1)$ and $A(\gam_2)$ generate a direct product.

The converse directions are more complicated, and we outline the main ideas for the convenience of the reader.

\begin{proof}[Sketch of proof of Theorem~\ref{thm:free-prod}]
We use the characterization of free products that follows from the work of Stallings~\cite{Stallings-ends,Stallings-book}.
Let $G$ be a finitely generated group with Cayley graph $X$. Recall that the set of \emph{ends}\index{end of a group}
of $G$ is the inverse limit of
$\pi_0(X\setminus K)$, where $K$ ranges over all compact subgraphs of $X$. A group has zero, one, two, or infinitely many ends.
As right-angled Artin groups are torsion-free (as follows from Proposition~\ref{p:top of salv} for instance), we have that a right-angled Artin
group $A(\gam)$ splits as a nontrivial free product if and only if it has infinitely many ends. It thus suffices to argue that a connected
graph $\gam$ yields a group with finitely many ends. For a graph with a single vertex, we have $A(\gam)$ is $\bZ$ and hence has two ends.
A straightforward argument shows that if $G$ and $H$ are both infinite groups then $G\times H$ has one end. Thus, we have that
all nontrivial joins of graphs yield right-angled Artin groups with one end, and by induction we suppose that all
connected graphs with at most
$n$ vertices yield groups with at most two ends. Let $v\in V(\gam)$. Then there is a proper subgraph $\Lambda$ of $\gam$ such that
\[\gam=\Lambda\cup_{\lk(v)}\st(v).\] We have that $\lk(v)$ is not empty since $\gam$ is connected. Thus, we have that
\[A(\gam)=A(\Lambda)*_{A(\lk(v))}A(\st(v)).\] If $\Lambda$ is connected then $A(\gam)$ is an amalgamated product of two finite-ended
groups over an infinite subgroup
(cf.~\cite{Serre1977}), whence one can prove directly that $A(\gam)$ is one-ended. If $\Lambda$ is disconnected, then
one can argue component-by-component of $\Lambda$ to obtain the same conclusion.
\end{proof}

For Theorem~\ref{thm:join}, we require a basic result about the structure of centralizers of elements in $A(\gam)$. Let $w$ be a word
in the vertices of $\gam$ and their inverses. We say that $w$ is \emph{reduced}\index{reduced word}
if $w$ cannot be shortened by applications of free
reductions and moves of the form $[v_1^{\pm1},v_2^{\pm1}]$ for $\{v_1,v_2\}\in E(\gam)$. We say that $w$ is
\emph{cyclically reduced}\index{cyclically reduced word}
if it remains reduced after allowing cyclic permutations of the letters occurring in $w$. It is true but not trivial that the moves of free reduction
and commutation solve the word problem in right-angled Artin groups, and that cyclic reduction solves the conjugacy problem (see
especially~\cite{CGW09},
cf.~\cite{cartier-foata,hm1995,vanwyk94,wrathall88}).

The \emph{support}\index{support} of $w$ is
written $\supp(w)$ and is defined to be the set of vertices which
are required (possibly inverted) to express $w$. It is not completely trivial but true that the support of $w$ is well-defined in the sense that
for reduced words, $w_1=w_2$ in $A(\gam)$ implies that $\supp(w_1)=\supp(w_2)$.

\begin{thm}[\cite{Servatius1989}]\label{thm:cent}
Let $1\neq w\in A(\gam)$ be cyclically reduced. Then the centralizer of $w$ lies in $\langle \supp(w)\cup\lk(\supp(w))\rangle$. If the
centralizer of $w$ is not cyclic then either $\lk(\supp(w))$ is nonempty, or $\supp(w)$ decomposes as a nontrivial join.
\end{thm}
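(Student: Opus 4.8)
Write $S=\supp(w)$ throughout. The argument rests on the standard normal form calculus for right-angled Artin groups: every element is represented by a reduced word, any two reduced words for the same element differ by a sequence of shuffles (commutations of adjacent commuting letters), so that $\supp$ is a well-defined conjugacy-type invariant on cyclically reduced words, and cyclic reduction together with shuffles solves the conjugacy problem (see \cite{cartier-foata,hm1995,vanwyk94,wrathall88}). In particular, if $w$ is cyclically reduced then so is $w^N$ for every $N\geq 1$, with $\supp(w^N)=S$. There are two assertions to prove: the containment $C(w)\subseteq\langle S\cup\lk(S)\rangle$, and the structural dichotomy when $C(w)$ is not cyclic.

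\textbf{Step 1: the containment.} Let $g\in C(w)$, fixed as a reduced word; I claim every letter of $g$ lies in $S\cup\lk(S)$. Suppose not, and choose a letter $v$ of $g$ with $v\notin S$ and $v\notin\lk(S)$, so that some $u\in S$ is not adjacent to $v$. From $gw^{N}g^{-1}=w^{N}$ with $N$ taken much larger than the length of $g$, one computes a reduced word for the left-hand side and compares it with the cyclically reduced word $w^{N}$: since a long central subword of $w^{N}$ cannot be shortened, uniqueness of reduced words up to shuffles forces each letter of $g$ either to be absorbed into $w^{N}$, hence to lie in $S$, or to commute past all of $w^{N}$, hence to commute with every letter of $w$ and so to lie in $\lk(S)$; this excludes $v$. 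This cancellation bookkeeping is the technical heart of the proof and the step I expect to be the main obstacle; it can equivalently be run as an induction on the length of $g$, peeling off one extremal letter at a time and quotienting $w$ by it.

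\textbf{Step 2: the ambient subgroup.} The full subgraph of $\gam$ spanned by $S\cup\lk(S)$ is the join $\gam_S*\gam_{\lk(S)}$: every vertex of $\lk(S)$ is adjacent to every vertex of $S$ by definition of the link, and $S\cap\lk(S)=\emptyset$ since $\gam$ is simplicial. Using the retractions of Proposition~\ref{prop:retraction} to identify $\langle S\cup\lk(S)\rangle$ with $A(\gam_S*\gam_{\lk(S)})\cong A(\gam_S)\times A(\lk(S))$, and noting that $w\in A(\gam_S)$ commutes with the entire second factor, Step 1 gives
\[
C(w)=C_{A(\gam_S)}(w)\times A(\lk(S)).
\]

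\textbf{Step 3: the dichotomy.} Suppose $C(w)$ is not cyclic. If $\lk(S)\neq\emptyset$ we are done, so assume $\lk(S)=\emptyset$; then $C(w)=C_{A(\gam_S)}(w)$ is not cyclic and we must show $\gam_S$ is a nontrivial join. Arguing the contrapositive, assume $\lk(S)=\emptyset$ and $\gam_S$ is not a nontrivial join. Then $w$ is not conjugate into any join subgroup $A(\Lambda)$ of $A(\gam)$: a conjugate of $w$ lying in $A(\Lambda)$ has a cyclically reduced form with support still $S$, so $S\subseteq V(\Lambda)$ for some nontrivial join $\Lambda=\Lambda_1*\Lambda_2$; if $S$ met both $V(\Lambda_i)$ then $\gam_S$ would split as the nontrivial join $\gam_{S\cap V(\Lambda_1)}*\gam_{S\cap V(\Lambda_2)}$, contrary to assumption, while if $S\subseteq V(\Lambda_1)$ then any vertex of the nonempty $V(\Lambda_2)$ would lie in $\lk(S)\setminus S$, contradicting $\lk(S)=\emptyset$. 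By Theorem~\ref{thm:acyl-intro}, $w$ then acts loxodromically on the extension graph $\gam^e$; since that action is acylindrical, the centralizer of a loxodromic element is virtually cyclic, and as $A(\gam)$ is torsion-free and $C(w)\ni w$ has infinite order, $C(w)\cong\bZ$. (One can also avoid Theorem~\ref{thm:acyl-intro} and argue directly from the normal form of $C_{A(\gam_S)}(w)$, treating the case of disconnected $\gam_S$ by a free-product argument and the case of $\gam_S$ connected and not a join separately; this is the route of \cite{Servatius1989}.)
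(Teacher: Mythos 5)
The paper does not prove Theorem~\ref{thm:cent}; it is cited directly to Servatius \cite{Servatius1989}, so there is no in-paper proof to compare against. Evaluating your argument on its own terms, the structure (containment, split off the $\lk$ factor via the join $\gam_S * \gam_{\lk(S)}$, then the dichotomy) is the right skeleton, and Step~2 is clean and correct.

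Two concerns. First, Step~1 is, as you say yourself, where essentially all the content lives, and the sketch is not quite tight enough to be checkable: the "absorbed into $w^N$ or commutes past all of $w^N$" dichotomy needs a careful normal-form argument (a letter $v\notin S$ can only cancel against a matching letter of $g^{-1}$, and to reach it must shuffle across every letter of $w^N$, forcing $v\in\lk(S)$; letters that cancel internally with $w^N$ are then necessarily in $S$). This can be done cleanly by induction on $|g|$ peeling off terminal letters, but one has to handle the non-uniqueness of terminal letters under shuffles; this is precisely the bookkeeping Servatius carries out. Being explicit that this is an invocation of the normal-form machinery rather than a complete argument is honest, but it does leave the main step open.

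Second, and more seriously, Step~3 as written is circular in spirit (and very likely literally). Theorem~\ref{thm:acyl-intro} is the Kim--Koberda result on acylindricity of the extension-graph action, and the equivalence you invoke --- "not conjugate into a join subgroup $\Rightarrow$ loxodromic" --- together with the centralizer consequence is exactly the content that \cite{KK2013b} extracts from the Servatius centralizer theorem (indeed, the full Theorem~\ref{thm:acyl} lists "centralizer noncyclic" as one of the equivalences). Using that machinery to re-derive Servatius inverts the actual logical dependency. You do flag the correct escape route in your parenthetical: argue directly in $A(\gam_S)$, treating disconnected $\gam_S$ by a free-product/Kurosh-type argument and connected, non-join $\gam_S$ by the pure-factor decomposition of $w$ (showing $C_{A(\gam_S)}(w)$ is the cyclic group generated by the primitive root of $w$). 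That is the argument that should be the main text of Step~3, with the acylindricity picture mentioned, if at all, only as a sanity check that cannot serve as the proof.
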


Armed with Theorem~\ref{thm:cent}, we can illustrate the other direction of Theorem~\ref{thm:join}.

\begin{proof}[Proof of Theorem~\ref{thm:join}]
Suppose that $A(\gam)\cong G\times H$ for nontrivial groups $G$ and $H$. Then since $A(\gam)$ is torsion-free, we have that
every nontrivial element of $A(\gam)$ contains a copy of $\bZ^2$ in its centralizer. Writing $V(\gam)=\{v_1,\ldots,v_n\}$, we have
that $w=v_1\cdots v_n$ is cyclically reduced and has noncyclic centralizer. Moreover, $\lk(\supp(w))=\emptyset$, so that
Theorem~\ref{thm:cent} implies that $\supp(w)=\gam$ splits as a nontrivial join.
\end{proof}

Theorem~\ref{thm:cent} has several other important consequences that relate the algebraic structure of $A(\gam)$ to the combinatorics
of $\gam$. First, we have the following.

\begin{thm}\label{thm:coho-dim}
The cohomological dimension\index{cohomological dimension} of $A(\gam)$ coincides with the size of the maximal clique in $\gam$.
\end{thm}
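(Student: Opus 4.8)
The plan is to sandwich the cohomological dimension $\operatorname{cd}(A(\gam))$ between $d$ and $d$, where $d$ is the number of vertices in a maximal clique of $\gam$, obtaining the upper bound from the geometry of the Salvetti complex and the lower bound from an abelian subgroup.

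For the upper bound I would invoke Proposition~\ref{p:top of salv}: the Salvetti complex $\mS(\gam)$ is a $K(A(\gam),1)$, and by its very construction it is a cube complex whose cells are indexed by the cliques of $\gam$, a $k$--clique contributing a $k$--dimensional cube; hence $\dim\mS(\gam)=d$. Since the universal cover $\widetilde{\mS(\gam)}$ is contractible, the cellular chain complex of $\widetilde{\mS(\gam)}$ is a free resolution of $\bZ$ over $\bZ A(\gam)$ of length $d$, so $\operatorname{cd}(A(\gam))\leq d$. It is worth flagging that the vanishing $H^k(A(\gam),\bZ)=0$ for $k>d$ supplied by Proposition~\ref{p:raag-homo} does not by itself suffice here, since cohomological dimension is measured against arbitrary coefficient modules; the genuine input is the contractibility of $\widetilde{\mS(\gam)}$, i.e.\ the nontrivial half of Proposition~\ref{p:top of salv}.

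For the lower bound I would take a clique $K\subset\gam$ on $d$ vertices. Proposition~\ref{prop:retraction} exhibits $A(K)$ as a retract of $A(\gam)$; in particular the natural homomorphism $A(K)\to A(\gam)$ is injective, so $\bZ^d\cong A(K)$ is a subgroup of $A(\gam)$. Cohomological dimension is monotone under passage to subgroups, and $\operatorname{cd}(\bZ^d)=d$ because the $d$--torus is a $K(\bZ^d,1)$ with $H^d(\bZ^d,\bZ)\cong\bZ\neq 0$; hence $\operatorname{cd}(A(\gam))\geq d$, and combining the two bounds proves the theorem.

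I do not anticipate a serious obstacle: the only point requiring care is not to conflate the easy computation of untwisted cohomology with the definition of cohomological dimension. If one wished to avoid citing contractibility of the Salvetti complex for the upper bound, one could instead argue by induction on $|V(\gam)|$ via the amalgamated product decomposition $A(\gam)=A(\Lambda)*_{A(\lk(v))}A(\st(v))$ from the proof of Theorem~\ref{thm:free-prod} together with the Mayer--Vietoris sequence in group cohomology, but the geometric argument is cleaner.
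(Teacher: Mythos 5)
Your proof is correct and takes essentially the same route as the paper: dimension of the aspherical Salvetti complex for the upper bound, and the $\bZ^d$ coming from a maximal clique for the lower bound. The only cosmetic difference is that the paper exhibits a nonvanishing class in $H^d(A(\gam),\bZ)$ via the split surjection $\mS(\gam)\to (S^1)^d$, whereas you invoke monotonicity of cohomological dimension under passage to the subgroup $\bZ^d\leq A(\gam)$; these are two standard ways of packaging the same input, and your explicit warning that untwisted cohomology vanishing alone does not bound $\operatorname{cd}$ is a useful point of care that the paper's sketch leaves implicit.
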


Theorem~\ref{thm:coho-dim} follows from standard ideas about cohomological dimension (cf.~\cite{brown-book82}), using
 the description of the Salvetti complex as a union of tori together with the fact that it is
aspherical by Proposition~\ref{p:top of salv}. We have that the maximal dimensional cells in $\mS(\gam)$ have the same dimension
as the maximal size of a clique in $\gam$, say $k$. Moreover, this $k$-cell is the top dimensional cell in a subtorus of dimension $k$,
which has nontrivial cohomology in degree $k$. Finally, the retraction 
$\mS(\gam)\longrightarrow (S^1)^k$ implies that the degree $k$ cohomology
of $\mS(\gam)$ is also nontrivial. It follows that $k$ is also the cohomological dimension of $A(\gam)$.

The cohomological dimension and maximal clique size also describe the rank of a maximal abelian subgroup.

\begin{thm}\label{thm:max-ab}
The maximal clique size of $\gam$ coincides with the rank of a maximal abelian subgroup of $A(\gam)$.
\end{thm}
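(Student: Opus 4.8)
The plan is to prove Theorem~\ref{thm:max-ab} by establishing both inequalities separately: every clique of $\gam$ gives an abelian subgroup of $A(\gam)$ of the corresponding rank, and conversely every abelian subgroup of $A(\gam)$ has rank bounded by the maximal clique size. The first direction is immediate from the defining presentation: if $K=\{u_1,\ldots,u_k\}$ spans a $k$--clique in $\gam$, then the vertices of $K$ pairwise commute, so $\langle K\rangle$ is a quotient of $\bZ^k$; combining this with Proposition~\ref{prop:retraction}, the retraction $A(\gam)\to A(K)\cong\bZ^k$ shows $\langle K\rangle$ is in fact isomorphic to $\bZ^k$. Hence the maximal clique size is a lower bound for the rank of a maximal abelian subgroup.

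For the reverse inequality, the natural route is to use Theorem~\ref{thm:cent}. Let $B\leq A(\gam)$ be an abelian subgroup; I would like to show $B$ embeds in $\bZ^k$ where $k$ is the maximal clique size. First reduce to the case of a cyclically reduced element: pick $1\neq w\in B$, and after conjugating (which does not change the isomorphism type of $B$ or of $A(\gam)$) we may assume $w$ is cyclically reduced. Since $B$ is abelian, $B$ lies in the centralizer $Z(w)$, which by Theorem~\ref{thm:cent} is contained in the subgroup $\langle\supp(w)\cup\lk(\supp(w))\rangle=A(\st(\supp(w)))$ — more precisely in the join subgroup $A(\supp(w))*A(\lk(\supp(w)))$ when $\supp(w)$ is a clique. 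The cleanest packaging is: $Z(w)$ is contained in a subgroup of the form $\bZ^r\times A(\Delta)$ for an appropriate induced subgraph $\Delta$ with strictly fewer vertices, which lets one run an induction on $|V(\gam)|$. In the base cases ($\gam$ a single vertex, or $\gam$ complete) the statement is clear. For the inductive step, an abelian $B$ inside $\bZ^r\times A(\Delta)$ projects to an abelian subgroup $B'$ of $A(\Delta)$, which by induction has rank at most the maximal clique size of $\Delta$; the kernel of the projection is abelian inside $\bZ^r$, so has rank at most $r$; and the point is that $r$ vertices together with a clique of $\Delta$ realizing the rank of $B'$ together form a clique in $\gam$, since every vertex of $\supp(w)$ is adjacent to everything in $\lk(\supp(w))\supseteq V(\Delta)$. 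Hence $\operatorname{rank}(B)\leq r+\operatorname{mcl}(\Delta)\leq\operatorname{mcl}(\gam)$.

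The main obstacle is the bookkeeping in the inductive step, namely making precise the claim that $Z(w)$ sits inside $\bZ^r\times A(\Delta)$ with the clique-compatibility between the $\bZ^r$ factor and cliques of $\Delta$. When $\supp(w)$ decomposes further as a nontrivial join, Theorem~\ref{thm:cent} still places $Z(w)$ in $A(\supp(w)\cup\lk(\supp(w)))$, which is a proper parabolic subgroup $A(\Delta')$ with $\Delta'\subsetneq\gam$ unless $\supp(w)=V(\gam)$; the latter case is exactly where one invokes that $\gam$ itself is a join, and by Theorem~\ref{thm:join} $A(\gam)=A(\gam_1)\times A(\gam_2)$, so one recurses on the factors. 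Keeping track of which subgraph one has descended to, and verifying at each stage that the accumulated commuting vertices still form a genuine clique of the original $\gam$, is the only genuinely delicate point; once that is organized, the rank bound falls out.

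Alternatively, one can bypass the centralizer structure theorem and argue via cohomological dimension: a finitely generated abelian subgroup $B\cong\bZ^d$ has cohomological dimension $d$, and since $B\leq A(\gam)$ we get $d\leq\operatorname{cd}(A(\gam))=k$ by Theorem~\ref{thm:coho-dim}, where $k$ is the maximal clique size; a separate (easy) argument rules out infinitely generated abelian subgroups, e.g.\ because $A(\gam)$ is of type $F$ or because any abelian subgroup lies in a centralizer $\langle\supp(w)\cup\lk(\supp(w))\rangle$ of a fixed finitely generated type. This gives the upper bound $d\leq k$ with almost no combinatorial bookkeeping, at the cost of invoking Theorem~\ref{thm:coho-dim}; I would present this as the quick proof and remark that the centralizer approach yields the sharper structural statement.
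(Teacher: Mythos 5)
Your proposal matches the paper's treatment in its core ideas: the paper likewise gives the cohomological-dimension bound as its main argument (via Theorem~\ref{thm:coho-dim}, using that $\mS(\gam)$ is $k$-dimensional and aspherical, so no cover has fundamental group $\bZ^{k+1}$) and then offers the centralizer-based argument as ``another perspective.'' The only organizational difference is in the centralizer induction: the paper inducts on the maximal clique size, placing an abelian $G$ inside $A(J_1)\times A(J_2)$ for a join $J=J_1*J_2$ and bounding $\operatorname{rank}(G)\leq k_1+k_2$ via the two coordinate projections, whereas you induct on $|V(\gam)|$ with the $\bZ^r\times A(\Delta)$ packaging --- which requires the refined structure of centralizers (that the centralizer of $w$ inside $A(\supp(w))$ is free abelian, with one $\bZ$ per join factor of $\supp(w)$, and that picking a vertex from each factor yields an $r$-clique adjacent to all of $\lk(\supp(w))$), slightly more than the literal statement of Theorem~\ref{thm:cent}; you rightly identify this bookkeeping as the one delicate step.
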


Theorem~\ref{thm:max-ab} is also
a consequence of general facts about cohomological dimension. Clearly, if the maximal clique size of $\gam$
is $k$ then $A(\gam)$ contains a copy of $\bZ^k$. Since $\mS(\gam)$ is $k$-dimensional and aspherical, it follows that no cover of
$\mS(\gam)$ can have fundamental group $\bZ^{k+1}$, so there are no abelian subgroups of rank exceeding $k$.

For another perspective, suppose $\gam$ is connected and $G<A(\gam)$ is an abelian subgroup of rank
$k\geq 2$. Conjugating if necessary, at least one nontrivial element
of $G$ is cyclically reduced, so that Theorem~\ref{thm:cent} implies that all nontrivial elements of $G$ are supported on a subgraph
$J$ of $\gam$ that splits as a nontrivial join. Writing $A(J)\cong A(J_1)\times A(J_2)$, we may restrict the projections 
$A(J)\longrightarrow A(J_i)$
for each $i$ to $G$.

Now, suppose first that $\gam$ has no triangles (i.e.~$3$-cliques). Then $J_1$ and $J_2$
cannot have any edges, since otherwise $\gam$ would have a triangle. It follows then that $A(J_i)$ is free for $i\in\{1,2\}$, and so
the image of $G$ in $A(J_i)$ is cyclic for each $i$. It follows that $G$ has rank at most two.
Thus, we may assume by induction that if the maximal clique
size of $\gam$ is at most $k\geq 2$ then the maximal abelian subgroup has rank
at most $k$. Supposing $\gam$ has maximal clique size $k+1$,
then $J_1$ and $J_2$ have maximal clique sizes $k_1>0$ and $k_2>0$, which satisfy $k_1+k_2\leq k+1$.
It follows by induction that the ranks
of the images of $G$ in $A(J_1)$ and $A(J_2)$ are at most $k_1$ and $k_2$, so that $G$ has rank at most $k+1$. This gives an alternate
proof of Theorem~\ref{thm:max-ab}.

The final elementary combinatorial property of graphs we will discuss is the maximal degree of a vertex. This property is essential in
the theory of expander graphs, which will be discussed below. In the sequel we will use a different characterization of the maximal
degree that is understood through cohomology, though the following is a significantly cleaner statement.

\begin{prop}\label{prop:degree}
Let $\gam$ be a graph and let $d$ denote the maximum valence of a vertex of $\gam$. Then the rank of the centralizer of a nontrivial
element of $A(\gam)$ is at most $d+1$. Conversely, if for all elements $1\neq g\in A(\gam)$ the centralizer of $g$ has rank at most $d+1$,
then the maximum degree of a vertex of $\gam$ is at most $d$.
\end{prop}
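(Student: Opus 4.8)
The plan is to leverage Theorem~\ref{thm:cent} on centralizers of cyclically reduced elements, together with Proposition~\ref{prop:coho} to translate between valence data and the combinatorics of links. For the first direction, suppose $1\neq g\in A(\gam)$ and, after conjugating, assume $g$ is cyclically reduced (this does not change the isomorphism type of the centralizer). By Theorem~\ref{thm:cent}, the centralizer of $g$ lies in $\langle \supp(g)\cup\lk(\supp(g))\rangle$, which is the right-angled Artin group on the full subgraph $\Lambda$ spanned by $\supp(g)\cup\lk(\supp(g))$. The rank of this subgroup is $|V(\Lambda)|$, so it suffices to bound $|V(\Lambda)|$ by $d+1$. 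First I would reduce to the case $|\supp(g)|=1$: if $g$ is not a power of a single vertex generator, one expects the centralizer to be genuinely smaller, but to be safe one can observe that Theorem~\ref{thm:cent} already confines the centralizer to $A(\Lambda)$, and the key observation is that every vertex of $\supp(g)$ together with its link sits inside the star of a single vertex only in degenerate situations — so instead I would argue directly that $\langle\supp(g)\cup\lk(\supp(g))\rangle$ need not have small rank in general, which forces a closer look.

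This is where the main obstacle lies, and it shows the statement must be read carefully: the centralizer in question is a subgroup of $A(\Lambda)$, but its \emph{rank} (minimal number of generators) can be much smaller than $|V(\Lambda)|$. In fact, by a refinement of Theorem~\ref{thm:cent} (the centralizer of a cyclically reduced $g$ decomposes as $\bZ^r\times A(\Lambda_0)$ where $\bZ^r$ is generated by the "pure" part supported on a join-factor of $\supp(g)$ and $\Lambda_0=\lk(\supp(g))$ restricted appropriately), one sees that a maximal-rank abelian-like centralizer is achieved precisely when $g$ is a single vertex generator $v$: then the centralizer is $\langle v\rangle\times A(\lk(v))=\langle\st(v)\rangle$, with rank $|\st(v)|=|\lk(v)|+1=\deg(v)+1\leq d+1$. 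For a general cyclically reduced $g$, the support being larger only adds more commutation constraints and the link $\lk(\supp(g))=\bigcap_{s\in\supp(g)}\lk(s)$ shrinks, so the rank of the centralizer is at most $\max_v(\deg(v)+1)=d+1$. So the key step I would carry out carefully is: (i) invoke the structural description of centralizers to reduce to $g=v^k$ for a vertex $v$ and integer $k\neq 0$; (ii) compute that $C(v^k)=\langle\st(v)\rangle$, of rank $\deg(v)+1$.

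For the converse, suppose every nontrivial element has centralizer of rank $\leq d+1$, and let $v$ be a vertex of maximum degree $\delta$. Then $v$ is cyclically reduced, and by the computation above $C(v)=\langle\st(v)\rangle$ has rank exactly $\delta+1$ (the generators $\st(v)$ are a minimal generating set because $A(\st(v))\cong\bZ\times A(\lk(v))$ and ranks of right-angled Artin groups equal vertex counts by Proposition~\ref{prop:coho}, since the abelianization has rank $|V|$). Hence $\delta+1\leq d+1$, i.e.\ $\delta\leq d$, as desired. The one technical point worth flagging — and the step I expect to require the most care — is justifying that $C(v^k)$ is exactly $\langle\st(v)\rangle$ and has rank precisely $|\st(v)|$; the upper bound rank $\leq|\st(v)|$ is Theorem~\ref{thm:cent}, the lower bound rank $\geq|\st(v)|$ follows from the abelianization count, and the equality $C(v^k)=C(v)$ for $k\neq 0$ in a torsion-free group (powers have the same centralizer as the root when roots are unique, which holds here) closes the argument.
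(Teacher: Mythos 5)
Your converse direction is correct and complete: a maximum-degree vertex $v$ has centralizer $C(v)=\langle v\rangle\times A(\lk(v))=A(\st(v))$, whose rank equals $|\st(v)|=\deg(v)+1$ by the abelianization count, and the rest follows. (The paper itself leaves the whole proposition as an exercise, so there is no argument in the paper to compare against.)

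The forward direction, however, has a genuine gap in the step you flag as (i), ``reduce to $g=v^k$.'' That reduction is not available, and the fallback reasoning you give --- ``the support being larger only adds more commutation constraints and the link shrinks, so the rank of the centralizer is at most $\max_v(\deg(v)+1)$'' --- does not by itself yield the bound. By the refinement you cite, if $\supp(g)=\Lambda_1*\cdots*\Lambda_r$ is the maximal join decomposition, then $C(g)\cong\bZ^r\times A(\lk(\supp(g)))$ has rank $r+|\lk(\supp(g))|$, not $1+|\lk(\supp(g))|$. Observing that $\lk(\supp(g))\subseteq\lk(v)$ for each $v\in\supp(g)$ only bounds the \emph{second} summand; it says nothing about $r$, and when $r>1$ your inequality is not the one you need. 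The missing count is: fix $v\in\Lambda_i$; since the $\Lambda_j$ form a join, $v$ is adjacent to every vertex of $\Lambda_j$ for $j\neq i$ and also to every vertex of $\lk(\supp(g))$, and these sets are disjoint from each other and from $\{v\}$; hence $\deg(v)\geq (r-1)+|\lk(\supp(g))|$, which gives $r+|\lk(\supp(g))|\leq\deg(v)+1\leq d+1$. Your claim that the maximal rank is ``achieved precisely when $g$ is a single vertex generator'' is also false, and the same phenomenon explains why: in $\gam=C_4$ with vertices $a,b,c,d$ in cyclic order, the element $g=ac$ has $\supp(g)=\{a,c\}$ (no join, $r=1$) and $\lk(\supp(g))=\{b,d\}$, so $C(g)\cong\bZ\times F_2$ has rank $3=d+1$, even though $g$ is not a power of a vertex. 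So the bound must be proved for all cyclically reduced $g$ directly, via the join-factor degree count above, rather than by reducing to vertex powers.
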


The proof of Proposition~\ref{prop:degree} is a fairly straightforward application of Theorem~\ref{thm:cent}, and we leave it as an exercise
for the reader.

\subsection{$k$-colorability}

From the point of computational complexity, one of the most basic and difficult questions
one can pose about a graph $\gam$ is about its colorability. A \emph{(vertex)
coloring}\index{vertex coloring} of a graph
$\gam$ is a function $\kappa\colon V(\gam)\longrightarrow X$, where 
$X$ is a finite set of \emph{colors}, and where $\{v,w\}\in E(\gam)$ implies that
$\kappa(v)\neq\kappa(w)$. A classical result of Brooks~\cite{diestel-book}
says that the minimal size of $X$ is at most the maximal degree of a vertex of
$\gam$ plus one. If $\gam$ is not an odd length cycle or a clique then the bound can be improved to the maximal degree of a vertex.
The minimal size of $X$ is called the~\emph{chromatic number}\index{chromatic number}
of $\gam$, and we say that $\gam$ is $|X|$-colorable.

A graph that is $2$-colorable is called ~\emph{bipartite}\index{bipartite graph}.
Determining if a graph is bipartite is easy from a computational point of view,
and can be accomplished by a sorting algorithm that runs in a period of time that is bounded by a polynomial in the size of the set of vertices.
However, the problem of determining if a graph is $3$-colorable
is extremely difficult from a computational standpoint, and is \emph{NP-complete} (see Subsection~\ref{ss:applications} below).

We remark that there is a related notion of \emph{edge coloring}\index{edge coloring},
which is a function $\epsilon\colon E(\gam)\longrightarrow X$ such that if
$v\in V(\gam)$ is incident to both $e_1$ and $e_2$, then $\epsilon(e_1)\neq \epsilon(e_2)$. It is clear that the minimal size of $X$
for a valid edge coloring is bounded
below by the maximal degree of a vertex of $\gam$. A result of Vizing~\cite{diestel-book}
 shows that $\gam$ admits an edge coloring with $|X|$ the maximal
degree of a vertex of $\gam$ plus one. Thus, giving sharp or almost sharp estimates on edge colorability of a graph is an essentially
local problem, whereas determining vertex colorability is an essentially global problem.

Let $\gam$ be a $k$-colorable graph. Choose a $k$-coloring of $\gam$, and add an edge to $\gam$ for every pair of vertices with
different colors, naming the result $\Lambda$. Observe that the vertices of $\Lambda$ are partitioned as
\[V(\Lambda)=V_1\cup\cdots\cup V_k,\] where there are no edges between vertices in $V_i$ for each $i$, and where for $i\neq j$,
each vertex of $V_i$ is adjacent to each vertex of $V_j$. It follows that $A(\Lambda)$ is a product of free groups, and that
$A(\Lambda)$ is a quotient of $A(\gam)$. It turns out that these elementary considerations characterize $k$-colorable graphs.

\begin{thm}\label{thm:color-body}
Let $\gam$ be a finite graph with $N$ vertices. Then $\gam$ is $k$-colorable if and only if there is a surjective map
\[A(\gam)\longrightarrow \prod_{i=1}^k F_{n_i},\] where for each $i$ the group $F_{n_i}$ is free of rank $n_i$, and where
\[\sum_{i=1}^k n_i=N.\]
\end{thm}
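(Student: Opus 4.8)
The forward direction is already handled by the discussion preceding the theorem: a $k$-coloring produces the "blow-up" graph $\Lambda$ which is a complete multipartite graph on the same vertex set, $A(\Lambda)$ is manifestly a product $\prod_{i=1}^k F_{n_i}$ with $\sum n_i = N$, and Proposition~\ref{prop:retraction} (or just the obvious presentation) shows $A(\Lambda)$ is a quotient of $A(\gam)$ since $\Lambda$ is obtained by adding edges to $\gam$. So the plan is to concentrate on the converse: given a surjection $\phi\colon A(\gam)\to \prod_{i=1}^k F_{n_i}$ with $\sum n_i = N$, produce a $k$-coloring of $\gam$.

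\medskip
\noindent\textbf{Plan for the converse.} First I would pass to abelianizations. The induced map $\overline{\phi}\colon \Z^N = A(\gam)^{\mathrm{ab}} \to \bigoplus_{i=1}^k \Z^{n_i} = \Z^N$ is a surjection of free abelian groups of equal rank $N$, hence an isomorphism. Dually, on $H^1(-,\Z)$ we get an isomorphism $\phi^*\colon \bigoplus_i H^1(F_{n_i},\Z) \xrightarrow{\ \sim\ } H^1(A(\gam),\Z)$, and this respects cup products. The key point is that $H^2$ of a free group vanishes, so for each fixed $i$ the cup product of any two classes pulled back from $H^1(F_{n_i},\Z)$ is zero. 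Now let $p_i\colon \prod_j F_{n_j}\to F_{n_i}$ be the projections; define $U_i \subset H^1(A(\gam),\Z)$ to be the image of $p_i^*\circ\phi^* $, i.e.\ the pullback of $H^1(F_{n_i},\Z)$. Then $H^1(A(\gam),\Z) = U_1\oplus\cdots\oplus U_k$ as a direct sum of subspaces, $\dim U_i = n_i$, and $q(u,u')=0$ for all $u,u'\in U_i$, where $q$ is the cup product pairing into $H^2(A(\gam),\Z)$ (or work over a field $F$ using Proposition~\ref{prop:coho}).

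\medskip
\noindent\textbf{From the cohomology decomposition to the coloring.} Here is where I'd use the description of the cup product from Proposition~\ref{prop:coho}: with respect to the preferred basis $\{v_1^*,\dots,v_N^*\}$ of $H^1$, one has $v_a^*\cup v_b^* \neq 0$ exactly when $\{v_a,v_b\}\in E(\gam)$. The subtlety is that the decomposition $H^1 = \bigoplus U_i$ need not be the one spanned by sub-collections of the $v_a^*$, so I cannot immediately read off a partition of the vertices. The fix is to choose, for each vertex $v_a$, an index $c(a)\in\{1,\dots,k\}$ such that the $U_{c(a)}$-component of $v_a^*$ (in the direct sum decomposition) is nonzero; such an index exists since $v_a^* \neq 0$. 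I claim $c$ is a proper $k$-coloring of $\gam$. Indeed, if $\{v_a,v_b\}\in E(\gam)$ with $c(a)=c(b)=i$, consider the projection $\pi_i\colon H^1\to U_i$ along the decomposition. Since cup product is bilinear and $U_j\cup U_i$, $U_i\cup U_j$ etc.\ all land in the various summands' contributions to $H^2$, one needs a compatible direct sum decomposition of $H^2$ for which $q$ is ``block-diagonal"; this is supplied by pulling back $H^2(\prod F_{n_j}) = \bigoplus_{i<j} H^1(F_{n_i})\otimes H^1(F_{n_j})$ (Künneth), i.e.\ $H^2(A(\gam)) \cong \bigoplus_{i<j} U_i\otimes U_j$ under $\phi^*$, with $q$ sending $U_i\times U_i \to 0$ and $U_i\times U_j$ ($i\neq j$) onto the $(i,j)$-summand. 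Then the $U_i\otimes U_i$-component of $v_a^*\cup v_b^*$ is $\pi_i(v_a^*)\cup\pi_i(v_b^*)$, computed inside the copy of $H^*(F_{n_i})$, which is zero. But one must ensure $v_a^*\cup v_b^* \neq 0$ is ``detected" in the diagonal block — and this is exactly the obstacle: a priori $v_a^*\cup v_b^*$ could be nonzero while all its ``diagonal" components vanish. To handle this I would instead run the argument more carefully: decompose each $v_a^* = \sum_i u_{a,i}$ with $u_{a,i}\in U_i$, expand $v_a^*\cup v_b^* = \sum_{i,j} u_{a,i}\cup u_{b,j}$, note $u_{a,i}\cup u_{b,i}=0$ always, so $v_a^*\cup v_b^* = \sum_{i\neq j} u_{a,i}\cup u_{b,j}$; if additionally $v_a^*$ and $v_b^*$ happened to lie in a single common $U_i$ this would force $v_a^*\cup v_b^*=0$, contradicting $\{v_a,v_b\}\in E(\gam)$. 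That only rules out a degenerate case, so the real work is choosing the color function compatibly. The clean route, which I would ultimately take, is: realize the surjection at the level of classifying spaces, $\mS(\gam)\to \prod (\vee^{n_i} S^1)$, and observe each vertex generator $v_a$ maps to an element of $\prod F_{n_i}$ which, since $\overline{\phi}$ is an isomorphism of $\Z^N$, must (up to the coordinate change) have ``essential support" detecting it in exactly the way needed; equivalently, pick the coloring $c(a)$ so that the image of $v_a$ under $\phi$ has nonzero image in $F_{n_{c(a)}}$, and use that two adjacent vertices commute, hence their images in each $F_{n_i}$ commute, hence lie in a common cyclic subgroup of the free group $F_{n_i}$ — combined with the fact that the images across all $i$ recover a basis of $\Z^N$ after abelianization, this forces $c(a)\neq c(b)$.

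\medskip
\noindent\textbf{Main obstacle.} The crux is the last implication: turning ``adjacent vertices $v_a, v_b$ commute in $A(\gam)$, hence their $\phi$-images commute in each free factor $F_{n_i}$, hence generate a cyclic group there'' into the statement that a single well-chosen color index separates every edge. Commuting elements of a free group lie in a common maximal cyclic subgroup, so if $c(a)=c(b)=i$ then $p_i(\phi(v_a))$ and $p_i(\phi(v_b))$ are both powers of a common primitive element $t\in F_{n_i}$; the danger is that this is consistent unless we also exploit that $\phi$ is surjective and the rank count $\sum n_i = N$ is tight. I expect the cleanest way past this is precisely the cohomological one sketched above — the equality $\dim H^1(A(\gam),F) = N = \sum \dim U_i$ forces the decomposition $H^1 = \bigoplus U_i$ to be ``as efficient as possible", and a counting/linear-algebra argument (choosing a system of distinct representatives, or a greedy choice of $c(a)$ with $u_{a,c(a)}\neq 0$ that is moreover part of a basis of $U_{c(a)}$ obtained from the $v_a^*$'s) should yield a genuine proper coloring. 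I would organize the final write-up around Proposition~\ref{prop:coho} and the Künneth block-decomposition of $q$, flagging that the detection-in-the-diagonal-block issue is resolved by the dimension equality $\sum n_i = N$, which is the hypothesis that makes the theorem true (without it, $A(\gam)$ surjecting onto \emph{some} product of free groups is automatic and says nothing about chromatic number).
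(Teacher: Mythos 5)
Your forward direction matches the paper, and your converse has all the right raw ingredients, but it stops short of an actual construction: you never produce a specific coloring that you can prove is proper. Your naive choice \(c(a) = \) ``some \(i\) with \(u_{a,i}\neq 0\)'' cannot be repaired by cup-product reasoning alone, for exactly the reason you notice: the cup product within each \(U_i\) vanishes \emph{automatically} (since \(H^2(F_{n_i})=0\)), so there is no ``diagonal block of \(H^2\)'' to detect anything in, and vanishing of \(u_{a,i}\cup u_{b,i}\) tells you nothing. The stronger fact you do mention in passing --- that commuting elements of \(\prod_i F_{n_i}\) have projections to each factor that are powers of a common primitive, hence proportional after abelianizing --- is the right replacement, but you leave it dangling with ``this forces \(c(a)\neq c(b)\)'' which, as you then admit, you have not shown.

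What closes the gap in the paper's proof is a concrete linear-algebra lemma you gesture at (``system of distinct representatives, or a greedy choice'') but do not state or prove. Let \(A\) be the \(N\times N\) change-of-basis matrix on \(H_1(\cdot,\Q)\), with columns grouped into blocks \(A_1,\dots,A_k\) of widths \(n_1,\dots,n_k\). Since \(A\) is invertible, a generalized Laplace expansion of \(\det A\) across the column partition shows there is a row permutation after which, for every \(i\), the \(n_i\times n_i\) diagonal sub-block \(C_i\) of \(A_i\) is invertible. \emph{That} permutation is what defines the coloring: \(v\) receives color \(j\) when its row passes through \(C_j\). Validity is then immediate from the proportionality fact: if adjacent \(v,w\) both had color \(i\), their rows restricted to the \(i\)-th column block would be proportional, giving two proportional rows inside the invertible \(C_i\), a contradiction. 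So your write-up is correct in outline and in spirit --- it is the paper's proof re-expressed dually on \(H^1\) --- but to be a proof it needs (i) the block-permutation lemma stated and invoked, (ii) the coloring defined by which diagonal block a row meets rather than by which component is nonzero, and (iii) the proportionality (not merely vanishing-cup-product) fact used in the validity check.
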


We have already established the ``only if" direction, which is easy. The reverse direction is somewhat more substantial, owing to the
fact that the surjective homomorphism need not send vertex generators of $\gam$ to a free factor of one of the free groups occurring on
the right hand side.

\begin{proof}[Sketch of proof of Theorem~\ref{thm:color-body}]
We identify the product of free groups with $A(\Lambda)$, where $\Lambda$ is a $k$-fold join of completely disconnected graphs,
say $\{\Lambda_1,\ldots,\Lambda_k\}$. If $g\in A(\Lambda)$, then $g$ can be written uniquely as a product of $g_1\cdots g_k$,
where $g_i\in A(\Lambda_i)$. One then shows that if $g=g_1\cdots g_k$ and $h=h_1\cdots h_k$ are elements of $A(\Lambda)$
that commute, then for each $i$, the images of $g_i$ and $h_i$ in the abelianization of $A(\Lambda_i)$ are rational multiples of
each other.

Now, the surjective map $A(\gam)\longrightarrow A(\Lambda)$ induces an isomorphism 
\[\phi\colon H_1(A(\gam),\bQ)\longrightarrow H_1(A(\Lambda),\bQ),\]
which can be expressed as a matrix $A$ with respect to the vertex generators of both graphs. We will view the rows of $A$
as expressions for $\phi(v)$ for $v\in V(\gam)$, in terms of the vertex generators of $\Lambda$. We arrange the columns so
that the first $|V(\Lambda_1)|$ columns correspond to vertices of $\Lambda_1$, followed by the vertices of $\Lambda_2$, and so on.

Write $A=(A_1\mid\cdots\mid A_k)$, where the columns of $A_i$ correspond to the vertices of $\Lambda_i$, and therefore
the column space of $A_i$ has dimension $|V(\Lambda_i)|$. Note that since $A$ is invertible, the row space of $A_1$ has
dimension $|V(\Lambda_1)|=n_1$.

It is an exercise in linear algebra to show that there is an $n_1\times n_1$ minor
$B_1$ of $A_1$ and a $(N-n_1)\times (N-n_1)$ minor $C$
of $(A_2\mid\cdots\mid A_k)$ such that both $B_1$ and $C$ are invertible.

By induction, one permutes the rows of $A$ to obtain a block matrix $B=(B_1\mid\cdots \mid B_k)$ such that the diagonal
$n_i\times n_i$ blocks
$\{C_1,\ldots,C_k\}$ of $B$ are invertible. This row permutation is simply a permutation of the vertices of $\gam$.
One defines a coloring of the vertices by setting $\kappa(v_i)=j$ if in the matrix expression $B$ of $\phi$, we have that
the row $\phi(v_i)$ meets the block $C_j$. That is, the vertices corresponding to the first $n_1$ rows are assigned color $1$, the next
$n_2$ are assigned color $2$, and so on.

To check that this is a valid coloring, suppose $v$ and $w$ are adjacent in $\gam$. Then $[v,w]=1$ in $A(\gam)$. For each
block $B_i$, we may consider the restriction of the rows $\phi(v)$ and $\phi(w)$ to the columns columns in $B_i$. In $B_i$,
these two rows are rational multiples of each other.
If $v$ and $w$ were assigned the same color then in some block $B_i$, the rows both meet $B_i$ in the diagonal sub-block $C_i$. Since
$C_i$ is invertible, this is a contradiction. Thus, we see that adjacent vertices of $\gam$ are assigned different colors, and so the coloring
of $\gam$ is valid.
\end{proof}

Unpacking the final check that $\kappa$ is a valid coloring in the proof of Theorem~\ref{thm:color-body}, it is not difficult to see that
in fact one can relax the condition that the homomorphism $A(\gam)\longrightarrow A(\Lambda)$ be surjective, and replace it with the condition
that it be surjective on the level of rational homology. From a practical point of view, this is a useful observation. Indeed, checking
that a linear map is surjective is relatively easy, but maps to direct products of free groups are much less well-behaved, since the subgroup
structure of the latter is very complicated~\cite{miha1958}.

\subsection{Hamiltonicity}\label{ss:ham}

In addition to computing the chromatic number of a finite graph, a classical NP-complete problem in graph theory is deciding whether a given
connected
graph admits a Hamiltonian cycle. Here, a \emph{Hamiltonian cycle}\index{Hamiltonian cycle}
is a circuit in $\gam$ that visits every vertex of $\gam$ exactly once.
A graph that admits a Hamiltonian cycle is simply called Hamiltonian.
Much like vertex colorability versus edge colorability, there is a notion of a circuit in $\gam$ that traverses every edge exactly once,
called an \emph{Eulerian cycle}\index{Eulerian cycle}.
It is a standard fact that a connected graph admits an Eulerian cycle if and only if each vertex has even degree. Thus, determining whether
a graph admits an Eulerian cycle is a purely local question, and the existence of a Hamiltonian cycle is a global question, impervious to
local methods. We direct the reader to~\cite{diestel-book} for background on Eulerian and Hamiltonian paths and cycles in graphs.

Let $(x_0,\ldots,x_n)$ denote a Hamiltonian cycle in $\gam$, and let \[\{x_0^*,\ldots,x_n^*\}\subset V=H^1(A(\gam),F)\] denote the
corresponding dual classes. Proposition~\ref{prop:coho} implies that \[x_i^*\cup x_{i+1}^*\neq 0\] for all $i$, where the indices are considered
cyclically modulo $n$. This is the fundamental observation when it comes to characterizing Hamiltonicity of $\gam$ in terms of the intrinsic
algebra of $A(\gam)$.

Let $(V,W,q)$ be a triple consisting of a vector space $V$ equipped with a vector-space-valued (i.e.~$W$-valued) (anti)-symmetric bilinear
pairing. We will assume that $V$ is finite dimensional. We say that $(V,W,q)$ is \emph{Hamiltonian}\index{Hamiltonian triple}
if for all bases $\{v_0,\ldots,v_n\}$ of
$V$, there is a permutation $\sigma\in S_{n+1}$ such that for all $i$, we have $q(v_{\sigma(i)},v_{\sigma(i+1)})\neq 0$.

Setting \[V=H^1(A(\gam),F),\quad W=H^2(A(\gam),F),\quad q=\cup,\] suppose that $(V,W,q)$ is Hamiltonian. Then there is a basis
$\{x_0^*,\ldots,x_n^*\}$ consisting of classes dual to the vertices of $\gam$. The Hamiltonicity of the triple immediately implies the
existence of a permutation $\sigma$ such that \[x_{\sigma(i)}^*\cup x_{\sigma(i+1)}^*\neq 0\] for all relevant indices, which immediately
implies that $\gam$ admits a Hamiltonian cycle.

\begin{thm}[See~\cite{FKK2021ham}]\label{thm:hamiltonian}
Let $\gam$ and $(V,W,q)$ be as above. Then $\gam$ admits a Hamiltonian cycle if and only if $(V,W,q)$ is Hamiltonian.
\end{thm}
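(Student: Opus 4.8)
The plan is to prove the nontrivial direction: if $\gam$ admits a Hamiltonian cycle, then the triple $(V,W,q)$ is Hamiltonian, i.e.\ \emph{every} basis of $V$ (not just the vertex-dual basis) admits a cyclic reordering along which $q$ is nonvanishing. The point is that being Hamiltonian is an intrinsic, basis-independent property of $(V,W,q)$, whereas a priori a Hamiltonian cycle only exhibits \emph{one} good basis. So the content is a change-of-basis argument: given an arbitrary basis $\{v_0,\dots,v_n\}$ of $V$, I want to produce a permutation $\sigma$ with $q(v_{\sigma(i)},v_{\sigma(i+1)})\neq 0$ cyclically. The natural strategy is to pass to the vertex-dual basis $\{x_0^*,\dots,x_n^*\}$, use the Hamiltonian cycle of $\gam$ together with Proposition~\ref{prop:coho} to get $x_i^*\cup x_{i+1}^*\neq 0$ along that cycle, and then transport this along a generic change of basis, controlling which pairings stay nonzero.

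First I would record the easy structural input. Fix a Hamiltonian cycle $(x_0,\dots,x_n,x_0)$ of $\gam$; by Proposition~\ref{prop:coho}, $q(x_i^*,x_{i+1}^*)=x_i^*\cup x_{i+1}^*\neq 0$ for all $i$ modulo $n+1$. Thus the vertex-dual basis is a ``Hamiltonian basis'' for $(V,W,q)$. Now let $B=\{v_0,\dots,v_n\}$ be an arbitrary basis, and write the change-of-basis as an invertible matrix $P\in GL_{n+1}(F)$ expressing each $v_j$ in terms of the $x_i^*$. The key step is a \emph{genericity / openness} argument: the condition ``$q(v_{\sigma(i)},v_{\sigma(i+1)})\neq 0$ for all $i$'' for a \emph{fixed} $\sigma$ is, after fixing a nonzero linear functional on $W$ to detect nonvanishing, the complement of a finite union of proper subvarieties in the space of matrices $P$; and the Hamiltonian basis $\{x_i^*\}$ corresponds to $P=I$, which lies in this nonvanishing locus for $\sigma=\mathrm{id}$. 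Over an infinite field one would then argue that a Zariski-open nonempty condition is satisfied on a dense set and pull back, but the subtlety is that $B$ is an \emph{arbitrary} basis, not a generic one, so one cannot simply say ``perturb $B$.''

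The honest route, and the step I expect to be the main obstacle, is to run the same combinatorial induction that underlies Theorem~\ref{thm:coho-alg} and Proposition~\ref{prop:pairing-conn}: namely, use the intrinsic invariants of $(V,W,q)$ (the rank function $v\mapsto \operatorname{rank}(f_v)$, the subspaces $V_0$, $V_{\min}$, etc.) to \emph{reconstruct} the graph $\gam$ from $(V,W,q)$ up to isomorphism, and only \emph{then} observe that Hamiltonicity of the triple is equivalent to Hamiltonicity of $\gam$ via the vertex-dual basis — but this is circular unless one shows that \emph{any} basis can be carried, by an automorphism of the triple, onto (a basis refining) the vertex-dual basis, which is false in general. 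So instead I would argue directly: given an arbitrary basis $B$, show by an exchange argument that one can reorder $B$ into a cyclic sequence in which consecutive pairs pair nontrivially. Concretely, build an auxiliary graph $\gam_B$ on vertex set $B$ with an edge $\{v_i,v_j\}$ whenever $q(v_i,v_j)\neq 0$; the claim to be proved is that $\gam_B$ is Hamiltonian. Using Proposition~\ref{prop:coho}-type bookkeeping — each $v_i=\sum_k P_{ik}x_k^*$, so $q(v_i,v_j)=\sum_{\{x_a,x_b\}\in E(\gam)} P_{ia}P_{jb}\,(x_a^*\cup x_b^*)$ — one shows that if $\gam$ is Hamiltonian (in particular connected with minimum degree $\geq 2$), then $\gam_B$ cannot have a cut structure obstructing a Hamiltonian cycle: any independent set in $\gam_B$ pulls back, via Proposition~\ref{prop:pairing-conn} applied to the span, to a join-type decomposition constraint on $\gam$ that a Hamiltonian graph does not satisfy. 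The technical heart is a Chvátal–Erdős–style or Ore-style sufficient condition for $\gam_B$ verified through these rank inequalities; carrying this out carefully — in particular handling the field $F$ correctly (one wants $F$ infinite, or else a separate small-field argument) and the degenerate cases where $\gam$ has repeated-link vertex classes as in Case 2 of the proof of Theorem~\ref{thm:coho-alg} — is where the real work lies, and I would flag it as such rather than claim it is routine.

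Finally, for the converse direction I would simply note what the excerpt already observes: if $(V,W,q)$ is Hamiltonian then, applying the definition to the vertex-dual basis and invoking Proposition~\ref{prop:coho}, one gets a permutation $\sigma$ with $x_{\sigma(i)}^*\cup x_{\sigma(i+1)}^*\neq 0$ for all $i$, hence $\{x_{\sigma(i)},x_{\sigma(i+1)}\}\in E(\gam)$ cyclically, i.e.\ $\gam$ has a Hamiltonian cycle. This direction is immediate and requires no further comment.
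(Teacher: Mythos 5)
Your handling of the easy direction is correct, and you have found the right general framework: form an auxiliary graph on the basis vectors (equivalently, the rows of the change-of-basis matrix) with edges recording nonvanishing pairings, and aim to show it is Hamiltonian. However, the step you yourself flag as ``where the real work lies'' is precisely where your proposal and the paper's argument diverge, and your proposed route does not plausibly close the gap. You suggest proving that $\gam_B$ is Hamiltonian via Ore- or Chv\'atal--Erd\H{o}s-type sufficient conditions extracted from rank inequalities and the pairing-connectedness machinery. There is no reason $\gam_B$ should satisfy any such degree or connectivity condition: pairing-connectedness (Proposition~\ref{prop:pairing-conn}) only yields that $\gam_B$ is connected, which is very far from Hamiltonicity, and nothing in the rank bookkeeping forces the degree sums $\deg(v_i)+\deg(v_j) \geq n+1$ that Ore-type theorems require.

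The paper does something sharper and qualitatively different. Rather than working with $\gam_B$ (edges for \emph{any} nonzero cup-product contribution), it first orders the columns of the change-of-basis matrix along a fixed Hamiltonian cycle of $\gam$, then considers only contributions from \emph{consecutive} column pairs, i.e.\ it defines the two-row graph $\GG(A)$ of Lemma~\ref{lem:2-graph}: rows $r_i, r_j$ are adjacent iff some $2\times 2$ submatrix in columns $k,k+1$ is invertible. This is a subgraph of your $\gam_B$, but crucially it is a construction that forgets $\gam$ entirely and makes sense for any matrix. The key lemma (Lemma~\ref{lem:hamilton-key}) is then a clean statement in pure linear algebra: for \emph{every} invertible matrix $A$, the two-row graph $\GG(A)$ is Hamiltonian. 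Its proof is not by a sufficient-condition theorem from graph theory but by a determinant-expansion argument: one partitions the terms of the Leibniz formula for $\det A$ according to maximal blocks of nonzero entries with one-dimensional row span, and shows that if $\GG(A)$ had no Hamiltonian cycle, all the resulting summands would cancel in pairs, forcing $\det A = 0$. This is the missing idea in your proposal. A minor additional point: your concern about the field needing to be infinite is a red herring; the paper's argument is characteristic-free and the applications explicitly use the two-element field.
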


The reader may check as an easy exercise that the Hamiltonicity of a triple $(V,W,q)$ implies that the triple is in fact pairing-connected,
so that if $(V,W,q)$ is Hamiltonian then $\gam$ is automatically connected by Proposition~\ref{prop:pairing-conn}.

Establishing Theorem~\ref{thm:hamiltonian} is tricky, and requires significantly more insight than Theorem~\ref{thm:color-body}, for instance.
We will attempt to briefly convey the main ideas to the reader in the remainder of this subsection. The reader is directed to
~\cite{FKK2021ham} for a full account.

In order to establish Theorem~\ref{thm:hamiltonian}, it is clearly sufficient to show that if $\gam$ is Hamiltonian then the triple
$(V,W,q)$ is also Hamiltonian. One may begin with the standard dual basis $\{x_0^*,\ldots,x_n^*\}$ for $V$ and hope to bootstrap it to show
that $(V,W,q)$ is Hamiltonian. One can begin with a change of basis matrix $A$, which transforms $\{x_0^*,\ldots,x_n^*\}$ to
a given basis $\{v_0,\ldots,v_n\}$ for $V$. We write $A=(a_i^j)$, where the subscript refers to the row and the superscript refers to the
column of a given entry.

We leave it as an easy exercise for the reader to show the following:

\begin{lem}\label{lem:2-graph}
The triple $(V,W,q)$ is Hamiltonian if for all $A\in\mathrm{GL}_{n+1}(A)$, there is a permutation $\sigma\in S_{n+1}$ such that for all
$0\leq i\leq n$, there exists a $0\leq j\leq n$ such that
\[A_i^j=\begin{pmatrix}a_{\sigma(i)}^j&a_{\sigma(i)}^{j+1}\\a_{\sigma(i+1)}^j& a_{\sigma(i+1)}^{j+1}\end{pmatrix}\] is invertible, where
all indices are considered cyclically.
\end{lem}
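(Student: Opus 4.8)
The plan is to transport the combinatorial Hamiltonicity of $\gam$ to the triple $(V,W,q)$ through a change-of-basis matrix, and then read off the non-vanishing of the cup product from $2\times 2$ minors. First I would fix, once and for all, a Hamiltonian cycle in $\gam$ and relabel the vertices as $x_0,x_1,\ldots,x_n$ so that $\{x_i,x_{i+1}\}\in E(\gam)$ for every $i$ taken cyclically modulo $n+1$; I take $\{x_0^*,\ldots,x_n^*\}$ to be the associated dual basis of $V=H^1(A(\gam),F)$ furnished by Proposition~\ref{prop:coho}. An arbitrary basis $\{v_0,\ldots,v_n\}$ of $V$ is then the same datum as a matrix $A=(a_i^j)\in\mathrm{GL}_{n+1}(F)$ via $v_i=\sum_{j=0}^n a_i^j\, x_j^*$, so it suffices to produce, for each such $A$, a permutation $\sigma$ with $q(v_{\sigma(i)},v_{\sigma(i+1)})\neq 0$ for all $i$.

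The one computation I would carry out is the expansion of $q(v_s,v_t)=v_s\cup v_t$ in the preferred basis of $W$. Bilinearity together with the anti-commutativity of the cup product on degree-one classes (and $x_j^*\cup x_j^*=0$, since $\gam$ has no loops) gives
\[q(v_s,v_t)=\sum_{0\le j<\ell\le n}\bigl(a_s^j a_t^\ell-a_s^\ell a_t^j\bigr)\,\bigl(x_j^*\cup x_\ell^*\bigr).\]
By Proposition~\ref{prop:coho}, the classes $x_j^*\cup x_\ell^*$ with $\{x_j,x_\ell\}\in E(\gam)$ are, up to sign, a basis of $W$, while $x_j^*\cup x_\ell^*=0$ when $\{x_j,x_\ell\}\notin E(\gam)$. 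Hence there is no cancellation between distinct edge classes, and $q(v_s,v_t)\neq 0$ as soon as the $2\times 2$ minor of $A$ on rows $s,t$ and columns $j,\ell$ is nonzero for even a single edge $\{x_j,x_\ell\}$ of $\gam$.

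Finally I would feed in the edges coming from the chosen Hamiltonian cycle: for each cyclic index $j$, columns $j$ and $j+1$ of $A$ index the edge $\{x_j,x_{j+1}\}\in E(\gam)$, and the corresponding minor on rows $\sigma(i),\sigma(i+1)$ is exactly $\det A_i^j$ in the notation of the statement. So if $\sigma$ is a permutation for which some $A_i^j$ is invertible for every $i$ — which is precisely what the hypothesis supplies for every $A\in\mathrm{GL}_{n+1}(F)$ — then $q(v_{\sigma(i)},v_{\sigma(i+1)})\neq 0$ for all $i$, and $(V,W,q)$ is Hamiltonian. I do not expect a genuine obstacle here: all the content sits in Proposition~\ref{prop:coho} and in the trivial observation that consecutive columns always index an edge of the fixed cycle. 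The one point to stay aware of is that this is deliberately a one-directional implication — a pair $v_{\sigma(i)},v_{\sigma(i+1)}$ can pair nontrivially through a non-consecutive edge of $\gam$ even when all the $A_i^j$ are singular — so no converse is claimed and none is needed for Theorem~\ref{thm:hamiltonian}.
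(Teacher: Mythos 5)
Your proof is correct and is exactly the computation the paper has in mind when it leaves this as ``an easy exercise'': expand $v_s\cup v_t$ in the edge basis of $H^2$ furnished by Proposition~\ref{prop:coho}, note that no cancellation occurs because the nonzero products $x_j^*\cup x_\ell^*$ with $\{x_j,x_\ell\}\in E(\gam)$ are (up to sign) distinct basis vectors, and then specialize to the consecutive-column minors coming from the Hamiltonian ordering. The one-directionality you flag at the end is a genuine and relevant observation about why this is stated as a sufficient (not necessary) criterion.
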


Lemma~\ref{lem:2-graph} gives rise to a natural definition that one can associate to matrices (which need not be invertible, or even square).
The \emph{two-row graph}\index{two-row graph} $\GG(A)$ of a matrix $A$ is a graph whose vertices are the rows
$\{r_0,\ldots, r_n\}$ of $A$, and whose columns are given by
the relation $\{r_i,r_j\}\in E(\GG(A))$ if the matrix
\[A_{i,j}^k=\begin{pmatrix}a_{i}^k&a_{i}^{k+1}\\a_{j}^k& a_{j}^{k+1}\end{pmatrix}\] is invertible for some $k$.

It is clear from Lemma~\ref{lem:2-graph} that $(V,W,q)$ is Hamiltonian provided that $\GG(A)$ is itself 
Hamiltonian for all suitable matrices $A$.
To get a feel for $\GG(A)$, the reader is encouraged to prove directly that $\GG(A)$ is connected whenever $A$ is invertible.
The heart of the proof of Theorem~\ref{thm:hamiltonian} is the following:

\begin{lem}\label{lem:hamilton-key}
Let $A$ be an invertible matrix. Then $\GG(A)$ is Hamiltonian.
\end{lem}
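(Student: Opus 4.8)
The plan is to establish, by induction on the size $N=n+1$ of $A$, the following strengthening of the lemma: if $A$ is an invertible $N\times N$ matrix over $F$ with columns cyclically indexed by $\bZ/N$, then $\GG(A)$ has a Hamiltonian cycle admitting a \emph{bijective certificate}, that is, a cyclic enumeration $r_{i_0},\ldots,r_{i_{N-1}}$ of the rows together with a bijection $t\mapsto k_t$ of $\bZ/N$ such that the $2\times 2$ submatrix of $A$ on rows $\{i_t,i_{t+1}\}$ and columns $\{k_t,k_t+1\}$ is invertible for every $t$. This trivially implies the lemma. For $N\le 3$ one checks directly that $\GG(A)$ is complete and carries such a certificate; the permutation matrices, for which $\GG(A)$ is itself an $N$--cycle using each column--pair exactly once (when $N\ge 3$), are a useful sanity check and already suggest that the certificate should be bijective.

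For the inductive step with $N\ge 4$, delete a single column $c$. The rows of the resulting $N\times(N-1)$ matrix project onto $F^{N-1}$, hence satisfy an essentially unique linear dependence $\sum_i\lambda_i\bar r_i=0$; since $\sum_i\lambda_i r_i$ is then a nonzero multiple of $e_c$, we get $e_c\in\operatorname{span}\{r_i:\lambda_i\neq 0\}$. Choose a row $\ell$ with $\lambda_\ell\neq 0$; deleting column $c$ and row $\ell$ produces an invertible $(N-1)\times(N-1)$ matrix $A''$, whose columns we equip with the cyclic order inherited from $A$. The consecutive column--pairs of $A''$ are exactly those of $A$ with $(c-1,c)$ and $(c,c+1)$ removed and the single bridging pair $(c-1,c+1)$ inserted. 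By induction $\GG(A'')$ has a Hamiltonian cycle $C''$ with a bijective certificate; let $\{a,b\}$ be the unique edge of $C''$ certified by $(c-1,c+1)$. Every other edge of $C''$ is certified by one of the $N-2$ remaining pairs $(k,k+1)$, all of which involve columns untouched by the deletion and rows other than $\ell$, so those edges and certificates are valid for $\GG(A)$ as well.

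It then remains to repair $C''$ at the single edge $\{a,b\}$ by splicing the deleted vertex $r_\ell$ back in: replace $\{a,b\}$ by the path $a\,-\,r_\ell\,-\,b$ and certify its two edges using the two pairs $(c-1,c)$ and $(c,c+1)$ that were lost. Concretely, one wants the $2\times 2$ submatrices on rows $\{a,\ell\}$, columns $\{c-1,c\}$ and on rows $\{\ell,b\}$, columns $\{c,c+1\}$ both invertible, or else the same statement with the two column--pairs interchanged. If this succeeds, the resulting cycle visits all $N$ rows, lies in $\GG(A)$, and its certificate exhausts the $N$ column--pairs bijectively, closing the induction.

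I expect the repair step to be the main obstacle, since for a fixed $c$ and $\ell$ it can genuinely fail. The plan is to exploit the available freedom — the column $c$ (any of the $N$, by cyclic symmetry) and the row $\ell$ (any one with $\lambda_\ell\neq 0$) — together with the constraint $e_c\in\operatorname{span}\{r_i:\lambda_i\neq 0\}$, to produce an admissible pair $(c,\ell)$ for which the repair goes through. Unwinding a failure for fixed $\ell$ shows that, on the three columns $\{c-1,c,c+1\}$, the projection of $r_\ell$ must be proportional to that of $r_a$ or to that of $r_b$, or the pair $\{a,b\}$ must degenerate on one of the two lost column--pairs; the crux is to argue that these obstructions cannot persist across all admissible choices of $(c,\ell)$ without contradicting the linear independence of the rows of $A$. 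As a warm-up, and possibly as an ingredient, one first records the easier fact that $\GG(A)$ is connected for invertible $A$: any separation $V(\GG(A))=S\sqcup T$ with no crossing edges forces, column by column around the cycle, every row on one side to vanish identically, which is impossible.
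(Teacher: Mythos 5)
Your route is genuinely different from the paper's, and as written it contains a real gap at exactly the point you yourself flag. The paper argues by contraposition through the determinant: one partitions the summands of the Leibniz expansion of $\det(A)$ according to maximal blocks of nonzero entries with one--dimensional row space, and then shows, using certain symmetries of this partition, that if $\GG(A)$ has no Hamiltonian cycle then the summands cancel in pairs, forcing $\det(A)=0$. You instead attempt a direct inductive construction, strengthened to a ``bijective certificate'': delete a column $c$ and a compensating row $\ell$ (a valid reduction, since the $(\ell,c)$ cofactor is nonzero exactly when $\lambda_\ell\neq 0$), invoke the hypothesis on the resulting invertible $(N-1)\times(N-1)$ matrix, and splice $r_\ell$ back in across the single edge certified by the bridging pair $(c-1,c+1)$.

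The gap is the repair step, and it is not a deferred verification --- it is the entire content of the lemma. The inductive hypothesis hands you \emph{some} Hamiltonian cycle $C''$ with \emph{some} bijective certificate, with no control over which edge $\{a,b\}$ ends up assigned to the bridging pair; that is the edge onto which $r_\ell$ must be spliced. You acknowledge the splice can fail for a fixed $(c,\ell)$ and propose to vary them, but changing $c$ simultaneously changes the dependency $(\lambda_i)$, hence the admissible $\ell$'s, the matrix $A''$, the cycle $C''$, and the edge $\{a,b\}$; no argument is offered, and none is apparent, that some choice must succeed across all these moving parts. The paper's remark that building a Hamiltonian cycle in $\GG(A)$ directly ``has the feel of an $NP$--complete problem'' is aimed at exactly this difficulty, and its determinant argument is designed to sidestep it rather than confront it head--on. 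Two further loose ends: the strengthened bijective--certificate hypothesis is itself an unproven assertion you would need to establish, including in the base cases, and the ``warm-up'' connectivity argument as stated (that a separation forces a row to vanish identically) does not obviously follow from the vanishing of the cyclically consecutive $2\times 2$ minors.
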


Lemma~\ref{lem:hamilton-key} is a curious fact in its own right, and its
proof is fairly involved. Producing a Hamiltonian cycle directly in $\GG(A)$ appears to be a difficult problem itself, and which
has the feel of an NP-complete problem (though this is by no means a theorem). Thus, one needs to use more indirect methods
to find a Hamiltonian cycle in $\GG(A)$.

The key idea is to analyze block submatrices of a matrix $A$
which consist of nonzero entries with one-dimensional row spaces. One can consider maximal such blocks, which give
rise to a partition of the products of entries of $A$ which contribute to the determinant of $A$, according to the standard Leibniz formula.
Using certain symmetries, one can then argue that if no Hamiltonian cycle exists in $\GG(A)$ then all summands in the determinant of $A$
cancel in pairs, and hence the determinant of $A$ is zero.

\subsection{Graph expanders}

In this subsection, we leave behind individual graphs, and concentrate on families of graphs known as graph expanders. Graph expanders
are sequences of connected graphs that are uniformly sparse and uniformly difficult to separate. Expander families find applications in
a myriad of different fields, such as knot theory, spectral graph theory and spectral geometry, probabilistic computation, and network theory.
We direct the reader to~\cite{LZ08,BY13,Bourg09,Alon86,KowalskiBook,LPS88,LubBook,HLWBAMS} for references relevant to this section.

A sequence of finite graphs $\{\gam_i\}_{i\in\bN}$ is called a \emph{graph expander family}\index{graph expander}
if the following conditions are satisfied:

\begin{enumerate}
\item
For all $i$, the graph $\gam_i$ is connected.
\item
There is a $d$ such that for all $i$, the maximum degree of a vertex of $\gam_i$ is at most $d$.
\item
We have $|V(\gam_i)|\longrightarrow\infty$.
\item
The Cheeger constant of $\gam_i$ is uniformly bounded away from zero, independently of $i$.
\end{enumerate}

Here, the \emph{Cheeger constant}\index{Cheeger constant} of a graph
$\gam$ is defined by considering subsets $A\subset V(\gam)$ such that $|A|\leq |V(\gam)|/2$,
and by looking at $\partial A$, which is defined to be the set of of vertices of $V(\gam)\setminus A$ that are adjacent to a vertex of $A$.
The \emph{isoperimetric constant}\index{isoperimetric constant}
of $A$ is defined to be \[c_A=\frac{|\partial A|}{|A|},\] and the Cheeger constant $c$ is the minimum of
$c_A$ as  $A$ ranges over all admissible subsets of $V(\gam)$. From this point of view, it is clear why the Cheeger constant measures
the difficulty in separating $\gam$: in order to completely cut a set $A\subset V(\gam)$ out of $\gam$, one has to sever at least
$c\cdot |A|$ edges.

By associating the standard cohomology triple \[V_i=H^1(A(\gam_i),F),\, W_i=H^2(A(\gam_i),F),\, q_i=\cup,\]
some of the defining properties of graph expanders translate almost immediately. Namely, we have 
$|V(\gam_i)|\longrightarrow\infty$ if and only if
$\dim V_i\longrightarrow\infty$, and $\gam_i$ is connected if and only if $(V_i,W_i,q_i)$ is $q_i$-pairing-connected.

The remaining conditions for defining graph expanders require some new ideas. The degree of a vertex is already characterized
in terms of centralizers via Proposition~\ref{prop:degree} above. Since centralizers of elements are less transparently cohomological
objects, we first translate this notion of degree into linear algebra. Let $(V,W,q)$ be a vector space with a vector-space-valued
bilinear pairing. If $\emptyset\neq S\subset V$ and $B$ is a basis for $V$, we write
\[d_B(S)=\max_{s\in S}|\{b\in B\mid q(s,b)\neq 0\}|.\] To get rid of the dependence on $B$, we set $d(S)$ to be the minimum of $d_B(S)$,
taken over all possible bases. To get rid of the dependence on $S$, we set $d(V)$ to be the minimum of $d(S)$, taken over all $S$ which
span $V$. The quantity $d(V)$ is called the \emph{$q$-valence}\index{$q$-valence} of $V$.

A reader who has understood the ideas in the proof of Theorem~\ref{thm:coho-alg} will have no trouble proving the following fact:

\begin{prop}\label{prop:q-valence}
Let $(V,W,q)$ be the usual cohomological triple associated to $A(\gam)$, and let $d$ be the maximum degree of a vertex of $\gam$.
Then $d(V)=d$.
\end{prop}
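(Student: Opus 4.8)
The plan is to prove the two inequalities $d(V)\le d$ and $d(V)\ge d$ separately, the first being immediate and the second carrying all the content. For $d(V)\le d$ I would simply take $S=B=\{v_1^*,\dots,v_n^*\}$ to be the standard dual basis. By Proposition~\ref{prop:coho}, $q(v_i^*,v_j^*)=v_i^*\cup v_j^*\neq 0$ precisely when $\{v_i,v_j\}\in E(\gam)$, so $|\{b\in B : q(v_i^*,b)\neq 0\}|=|\lk(v_i)|$, whence $d_B(S)=\max_i|\lk(v_i)|=d$ and therefore $d(V)\le d(S)\le d$.

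For $d(V)\ge d$ (the case $d=0$ being trivial) I would pass to a basis-free reformulation. First record the elementary fact that for any $v\in V$ and any basis $B$ of $V$: if $q(v,b)=0$ for all but $k$ of the elements $b\in B$, then the remaining $n-k$ basis vectors lie in $\ker f_v$, where $f_v=q(v,-)\colon V\to W$, so $\operatorname{rank} f_v\le k$. Thus $|\{b\in B : q(v,b)\neq 0\}|\ge \operatorname{rank} f_v$ for every basis $B$, and consequently the subset $R=\{v\in V : \operatorname{rank} f_v\le d-1\}$ is basis-independent. Now suppose, for contradiction, that some spanning set $S$ and some basis $B$ gave $d_B(S)\le d-1$; then every $s\in S$ would have $\operatorname{rank} f_s\le d-1$, i.e.\ $S\subseteq R$, so it suffices to show that $R$ does not span $V$.

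The heart of the matter — and the step I expect to be the main obstacle — is proving $\operatorname{span}(R)\subsetneq V$, and here I would mimic the analysis of the maps $f_v$ from the proof of Theorem~\ref{thm:coho-alg}. Let $x$ be a vertex with $|\lk(x)|=d$. For $v=\sum_i\alpha_i v_i^*$ with the coefficient $\alpha_x$ of $x^*$ nonzero, Proposition~\ref{prop:coho} gives $f_v(v_j^*)=\sum_{v_i\in\lk(v_j)}(\pm\alpha_i)\,e_{\{v_i,v_j\}}^*$, and one checks that among the vectors $\{f_v(v_j^*) : v_j\in\lk(x)\}$ the basis class $e_{\{x,v_j\}}^*$ of $W$ appears only in $f_v(v_j^*)$, with nonzero coefficient $\pm\alpha_x$; comparing coefficients of these $d$ distinct classes shows that $\{f_v(v_j^*) : v_j\in\lk(x)\}$ is linearly independent, so $\operatorname{rank} f_v\ge d$ and hence $v\notin R$. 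Therefore every element of $R$ has vanishing $x^*$-coordinate, so $\operatorname{span}(R)$ lies in the coordinate hyperplane $\{\alpha_x=0\}\subsetneq V$. This contradicts $S\subseteq R$ with $S$ spanning $V$, so $d_B(S)\ge d$ for all spanning $S$ and all bases $B$, giving $d(V)\ge d$ and completing the proof.
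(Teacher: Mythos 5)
Your proof is correct, and it supplies exactly the details the paper leaves to the reader: the paper omits a proof and simply points to the analysis of the rank of the maps $f_v=q(v,-)$ from the proof of Theorem~\ref{thm:coho-alg}, which is precisely what you use. The upper bound via the standard dual basis is the easy half, and the key step — showing that any $v$ with nonzero $x^*$-coordinate (for $x$ of maximum degree $d$) satisfies $\operatorname{rank} f_v\ge d$, so that $R=\{v:\operatorname{rank} f_v\le d-1\}$ lies in the hyperplane $\{\alpha_x=0\}$ and cannot contain a spanning set — is argued correctly: the coefficient of $e_{\{x,v_j\}}^*$ in $f_v(v_{j'}^*)$ vanishes for $v_{j'}\in\lk(x)$, $v_{j'}\neq v_j$, because $x\notin\lk(x)$ rules out $v_{j'}=x$. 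One minor wording slip: when you write that ``the remaining $n-k$ basis vectors lie in $\ker f_v$,'' you mean the $n-k$ basis vectors with $q(v,b)=0$, not the $k$ remaining ones, but the intended inequality $|\{b\in B: q(v,b)\neq 0\}|\ge\operatorname{rank} f_v$ is right. No gaps.
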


It remains to properly define the Cheeger constant of the triple $(V,W,q)$. Again, a reader who has absorbed the ideas
in the proof of Theorem~\ref{thm:coho-alg} could probably guess the definition. Let $Z\subset V$ be a vector space with
$0\neq\dim Z\leq (\dim V)/2$. We will write $C$ for the orthogonal complement of $Z$, which is to say the set of vectors $v\in V$ such that
$q(v,z)=0$ for all $z\in Z$. The isoperimetric constant of $Z$ is defined to be \[c_Z=\frac{\dim V-\dim Z-\dim C+\dim(C\cap Z)}{\dim Z}.\]
The Cheeger constant $c_V$ of the triple $(V,W,q)$ is taken to be the infimum of $c_Z$ as $Z$ varies over all nonzero subspaces of $V$ of
dimension at most half of that of $V$.

Let $\{x_1,\ldots,x_n\}$ denote the vertices of $\gam$ and $\{x_1^*,\ldots,x_n^*\}$ be the dual generators of $H^1(A(\gam),F)$.
If $B\subset \{x_1,\ldots,x_n\}$, write $B^*$ for the corresponding subset of $\{x_1^*,\ldots,x_n^*\}$.
The following is an exercise for the reader:

\begin{prop}
Let $\emptyset\neq B\subset V$, and let $Z\subset V$ be generated by $B^*$. Then \[c_Z=\frac{|\partial B|}{|B|}.\]
\end{prop}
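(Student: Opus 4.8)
The plan is to unwind the definition of the isoperimetric constant $c_Z$ directly, using only the explicit form of the cup product furnished by Proposition~\ref{prop:coho}. (Here $B$ is of course to be read as a nonempty set of vertices $B\subseteq\{x_1,\ldots,x_n\}$, so that $B^*\subseteq\{x_1^*,\ldots,x_n^*\}$ and $Z=\mathrm{span}(B^*)$.) Since $B^*$ is a subset of the basis $\{x_1^*,\ldots,x_n^*\}$ of $V$, it is linearly independent, so $\dim Z=|B|$; this already pins down the denominator of $c_Z$. It remains to evaluate the numerator $\dim V-\dim Z-\dim C+\dim(C\cap Z)$, where $C=Z^{\perp}$ is the orthogonal complement of $Z$ with respect to the cup product pairing $q$. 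Trivially $\dim V=n=|V(\gam)|$, so the real work is in computing $\dim C$ and $\dim(C\cap Z)$.

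The key step is to identify $C$ explicitly. Write a general vector as $v=\sum_{i=1}^n\alpha_i x_i^*$. For $x_j\in B$, Proposition~\ref{prop:coho} gives $q(v,x_j^*)=\sum_{x_i\in\lk(x_j)}\pm\alpha_i e_{ij}^*$, where $e_{ij}^*$ denotes the degree-two class dual to the edge $\{x_i,x_j\}$; since the $e^*$-classes form a basis of $W$, this vanishes precisely when $\alpha_i=0$ for every $x_i\in\lk(x_j)$. Hence, using bilinearity and $Z=\mathrm{span}(B^*)$, a vector $v$ lies in $C=Z^{\perp}$ if and only if its support (with respect to the dual basis) is disjoint from $U:=\bigcup_{b\in B}\lk(b)$, the set of vertices adjacent to at least one vertex of $B$. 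Thus $C=\mathrm{span}\{x_i^*\mid x_i\notin U\}$ is a coordinate subspace, so $\dim C=n-|U|$. Since $Z=\mathrm{span}\{x_i^*\mid x_i\in B\}$ is also a coordinate subspace, $C\cap Z=\mathrm{span}\{x_i^*\mid x_i\in B\setminus U\}$, whence $\dim(C\cap Z)=|B\setminus U|=|B|-|B\cap U|$.

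Substituting into the numerator then gives
\[
\dim V-\dim Z-\dim C+\dim(C\cap Z)=n-|B|-(n-|U|)+(|B|-|B\cap U|)=|U|-|B\cap U|=|U\setminus B|.
\]
Since $U\setminus B$ is precisely the set of vertices outside $B$ that are adjacent to some vertex of $B$, we have $U\setminus B=\partial B$; dividing by $\dim Z=|B|$ yields $c_Z=|\partial B|/|B|$, as claimed. The only points requiring any care — and the reason this is posed as an exercise rather than as the main difficulty of the section — are the explicit computation of $Z^{\perp}$ from Proposition~\ref{prop:coho} (one must use that the $e^*$-classes are a genuine basis of $W$, not merely a spanning set) and the observation that the intersection of two coordinate subspaces with respect to a common basis is the coordinate subspace on the intersection of the two index sets; both are routine linear algebra.
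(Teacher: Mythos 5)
Your proof is correct, and it follows the natural route that the paper implicitly has in mind (the paper leaves this as an exercise with no written proof). The essential observations — that $Z$ is a coordinate subspace of dimension $|B|$, that $C=Z^{\perp}$ is the coordinate subspace on the indices \emph{not} lying in $U=\bigcup_{b\in B}\lk(b)$ (which requires only Proposition~\ref{prop:coho} and the fact that the edge-duals form a basis of $W$), and that the inclusion-exclusion in the numerator collapses to $|U\setminus B|=|\partial B|$ — are exactly right, and the bookkeeping checks out.

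One small point worth stating explicitly, though you handle it implicitly: when you expand $q(v,x_j^*)=\sum_{x_i\in\lk(x_j)}\pm\alpha_i\,e_{\{i,j\}}^*$, the conclusion that this forces $\alpha_i=0$ for each $x_i\in\lk(x_j)$ relies on the fact that, for fixed $j$, the edges $\{x_i,x_j\}$ with $x_i\in\lk(x_j)$ are pairwise distinct, so the corresponding $e^*$-classes appearing in that single expression are distinct basis vectors of $W$. (Edge-duals can of course recur across different $j$'s, but since you impose $q(v,x_j^*)=0$ one $j$ at a time, this causes no interference.) With that sentence added, the argument is complete.
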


Thus, the Cheeger constant of $\gam$ is bounded below by $c_V$. \emph{A priori}, there are many more subspaces of $V$ than there are
subgraphs of $\gam$, so that in principle $c_V$ could be strictly smaller than the Cheeger constant of $\gam$.

\begin{thm}[See~\cite{FKK2020exp}]\label{thm:expander}
Let $\{\gam_i\}_{i\in\bN}$ be a sequence of graphs, and let \[\{(V_i,W_i,q_i)\}_{i\in\bN}\] be the corresponding cohomological triples.
We have that $\{\gam_i\}_{i\in\bN}$ forms a family of expanders if and only if:
\begin{enumerate}
\item
For each $i$, the triple $(V_i,W_i,q_i)$ is pairing-connected.
\item
There is a $d$ such that the $q_i$-valence of $V_i$ is bounded above by $d$.
\item
We have $\dim V_i\longrightarrow\infty$.
\item
There is an $\epsilon>0$ such that for all $i$, we have $c_{V_i}\geq\epsilon$.
\end{enumerate}
\end{thm}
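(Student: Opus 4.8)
The plan is to match the four numbered conditions, one at a time, against the four expander axioms; three of these matchings are immediate translations of results already in hand, so all of the content lies in the fourth. By Proposition~\ref{prop:pairing-conn}, $(V_i,W_i,q_i)$ is pairing--connected exactly when $\gam_i$ is connected; since $\dim V_i=\dim H^1(A(\gam_i),F)=|V(\gam_i)|$, the hypothesis $\dim V_i\to\infty$ is literally the hypothesis $|V(\gam_i)|\to\infty$; and by Proposition~\ref{prop:q-valence} the $q_i$--valence of $V_i$ equals the maximum degree of a vertex of $\gam_i$, so condition~(2) is exactly a uniform degree bound. This disposes of conditions~(1)--(3). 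Moreover one half of condition~(4) is equally painless: by the proposition immediately preceding the theorem, every admissible vertex subset $B\subseteq V(\gam_i)$ gives an admissible subspace $\langle B^*\rangle$ with $c_{\langle B^*\rangle}=|\partial B|/|B|$, so $c_{V_i}$, being an infimum over the larger class of \emph{all} admissible subspaces, satisfies $c_{V_i}\le c(\gam_i)$. Thus condition~(4) forces the Cheeger axiom, and together with the previous remarks we have already proved that conditions~(1)--(4) imply the expander property.

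The substance of the theorem is the reverse of this last step: an expander family must satisfy~(4). I would isolate this as a statement about a single graph --- \emph{if $\gam$ has maximum degree at most $d$, then $c_V\ge F(c(\gam),d)$ for some $F$ that is positive whenever $c(\gam)>0$} (one expects $F(\epsilon_0,d)$ comparable to $\epsilon_0/(d+1)$) --- which, applied to each $\gam_i$ with the common degree and Cheeger bounds, yields~(4). To prove the quantitative statement I would argue by contraposition: turn an admissible subspace $Z\subseteq V$ with $c_Z$ small into an admissible vertex subset $A\subseteq V(\gam)$ with $|\partial A|/|A|$ small, contradicting the expansion of $\gam$. The point of entry is the identity, obtained by unwinding the definition of $c_Z$,
\[c_Z\cdot\dim Z\;=\;\dim V-\dim(Z+Z^\perp)\;=\;\operatorname{rank}(q|_{V\times Z})-\operatorname{rank}(q|_{Z\times Z}),\]
where $Z^\perp$ is the $q$--orthogonal complement of $Z$; so $c_Z$ small means that $Z$ pairs nontrivially with essentially nothing beyond what it already pairs with inside itself. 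Using the explicit form of $q=\cup$ on the vertex basis from Proposition~\ref{prop:coho}, one would show that such a ``pairing--saturated'' low--dimensional $Z$ must lie close to a coordinate subspace: writing a basis of $Z$ in reduced echelon form with respect to the vertex generators, let $A$ be the set of pivot vertices, so that $|A|=\dim Z$, and bound the number of vertices adjacent to but outside $A$ by $\dim V-\dim(Z+Z^\perp)$ up to a factor depending only on $d$. This yields $|\partial A|/|A|\le(d+1)c_Z$ (replacing $A$ by its complement if $\dim Z$ exceeds $\dim V/2$), which completes the reduction.

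The main obstacle is exactly this rounding step. The difficulty is that $Z$ need not be a coordinate subspace, and the supports of its basis vectors can be far larger than $\dim Z$; the real work is to show that this ``overflow'' --- the extra vertices on which $Z$ is supported, together with all the edges among them --- can be absorbed into an error of the form $(d+1)\bigl(\dim V-\dim(Z+Z^\perp)\bigr)$. Boundedness of the degree is indispensable here, as it is what lets one pass between counting ranks inside $W$ and counting edges, hence boundary vertices, inside $\gam$; and no purely formal argument can work, since without a degree bound the comparison $c_V\ge c(\gam)/(d+1)$ already fails for large star graphs. The remaining bookkeeping --- relating $c_Z$ to $|\partial A|/|A|$, edge boundaries to vertex boundaries, and $Z$ to $Z^\perp$ --- is routine and costs only further $d$--dependent constants.
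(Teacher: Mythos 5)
Your disposal of conditions~(1)--(3) and the easy half of~(4) (namely $c_{V_i}\le c(\gam_i)$ via coordinate subspaces) is exactly the reduction the paper performs before stating the theorem, so the forward implication is fine. The paper then observes that the whole theorem reduces to the single claim that $c_{V_i}$ \emph{equals} the graph Cheeger constant $c(\gam_i)$, and it explicitly declines to prove this, remarking that the argument in~\cite{FKK2020exp} is ``a rather tedious sorting argument'' and that no conceptually simple proof is known. So the paper's own ``proof'' of the hard direction consists of the reduction plus a citation; you reproduce the reduction, then sketch an attack on the cited lemma.

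Here is where the proposal has a genuine gap. The echelon-pivot rounding is precisely the hard content, and you stop exactly at it: you state that $Z$ need not be a coordinate subspace and that absorbing the ``overflow'' is ``the real work,'' and then declare the bookkeeping routine without supplying the argument that $|\partial A|\le(d+1)\bigl(\dim V-\dim(Z+Z^\perp)\bigr)$. Given that the paper itself warns there is no simple proof and that the published argument is lengthy, asserting the rest is routine is not a proof. As a concrete worry: taking one pivot per basis vector gives $|A|=\dim Z$, but nothing in the echelon-form choice constrains $\partial A$ in terms of $\dim V-\dim(Z+Z^\perp)$; the non-pivot support of $Z$ can be arbitrarily large and widely distributed, and you would need to show its contribution cancels, which is exactly the combinatorial sorting the reference carries out.

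There is also a factual error worth flagging. You claim that ``without a degree bound the comparison $c_V\ge c(\gam)/(d+1)$ already fails for large star graphs,'' and use this to argue the degree bound is indispensable. This is false: for $\gam=K_{1,n}$ one can compute directly that $c_V=c(\gam)=1/\lfloor (n+1)/2\rfloor$. Indeed, if every basis vector of $Z$ lies in the leaf hyperplane $\{\beta_0=0\}$ then $Z^\perp=\{\beta_0=0\}$ and $c_Z\cdot\dim Z=1$; while if some vector of $Z$ has a nonzero $v_0^*$-component $w$, then $w^\perp=\langle w\rangle$ is one-dimensional, forcing $c_Z\ge (n-\dim Z)/\dim Z\ge (n-1)/(n+1)$, which is far larger. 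So the minimum is attained in the first case and equals the graph Cheeger constant. This is consistent with the paper's unconditional equality claim, so no degree-dependent loss factor should be expected, and the philosophical remark that ``no purely formal argument can work'' without a degree bound is not supported. (Aiming for the weaker bound $c_V\ge c(\gam)/(d+1)$ is still logically sufficient for the theorem, since a uniform $d$ is part of the expander hypothesis, but the paper asserts the sharper statement.)
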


An abstract sequence of triples $\{(V_i,W_i,q_i)\}_{i\in\bN}$ is called a family of
\emph{vector space expanders}\index{vector space expander}
(not to be confused with \emph{dimensional expanders}\index{dimensional expander}, cf.~\cite{LZ08,BY13,Bourg09}).
In light of the preceding discussion, in order to establish Theorem~\ref{thm:expander}, it suffices to show that
for each $i$, the Cheeger constant $c_{V_i}$ coincides with
the Cheeger constant of $\gam_i$. Unfortunately, the author does not know a conceptually simple proof of this fact. The proof given
in~\cite{FKK2020exp} involves a rather technical sorting argument, and so we will not comment on it any further.

\subsection{Graph automorphisms}\label{ss:aut}

One of the most basic questions one can ask about a graph (and indeed about a relation) is how symmetric it is. Symmetry is measured
by the richness of the automorphism group, and the smaller the size of the automorphism group, however it is measured, the less symmetric
the object.

The automorphisms of graphs are of great interest in graph theory
~\cite{GR2001-book,diestel-book,BW04-book}, and in complexity theory as well~\cite{babai-proceedings}.
Many finite graphs are highly symmetric. For instance, the automorphism group of a $k$-clique is the full symmetric group on $k$ letters.
Many other graphs have no nontrivial automorphisms. For instance, take a path of length five, with vertices labeled linearly as
$\{a,b,c,d,e,f\}$,
and add another vertex $g$ which is adjacent only to $d$. The resulting graph $\gam$ has no nontrivial automorphisms, as is readily
verified by an exhaustive check. See Figure~\ref{f:gam}.

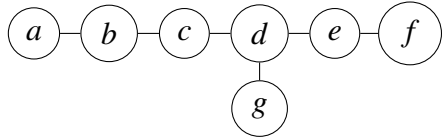
\begin{figure}[h!]
  \tikzstyle {bv}=[black,draw,shape=circle,fill=black,inner sep=1pt]
\begin{tikzpicture}[main/.style = {draw, circle}]
\node[main] (1) {$a$};
\node[main] (2) [right of=1] {$b$};
\node[main] (3) [right of=2] {$c$}; 
\node[main] (4) [right of=3] {$d$};
\node[main] (5) [right of=4] {$e$}; 
\node[main] (6) [right of=5] {$f$};
\node[main] (7) [below of=4] {$g$};

\draw (1)--(2)--(3)--(4)--(5)--(6);
\draw (4)--(7);
\end{tikzpicture}%
\caption{A graph that has no nontrivial automorphisms.}
\label{f:gam}
\end{figure}

Observe that a nontrivial automorphism of a graph $\gam$ gives rise to a non-inner automorphism of $A(\gam)$. Moreover, if $v\in V(\gam)$,
then the function $v\mapsto v^{-1}$ extends to a non-inner automorphism of $A(\gam)$ via the identity on the remaining vertices.
It is easy to see that the group $\Aut(A(\gam))$,
and in fact $\Out(A(\gam))$, contains a subgroup isomorphic to \[\Aut(\gam)\ltimes(\bZ/2\bZ)^{|V(\gam)|}.\] Thus, if $\gam$ admits a nontrivial
automorphism, then $\Out(A(\gam))$ contains a nonabelian finite subgroup.

\begin{thm}[See~\cite{FKK2019}]\label{thm:aut}
Let $\gam$ be a finite simplicial graph. We have that $\gam$ admits a nontrivial automorphism if and only if $\Out(A(\gam))$ contains a finite
nonabelian subgroup.
\end{thm}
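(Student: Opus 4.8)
The plan is to prove the nontrivial direction: assuming $\Out(A(\gam))$ contains a finite nonabelian subgroup $H$, produce a nontrivial automorphism of $\gam$. The key tool is the action of $\Aut(A(\gam))$ on the cohomology triple $(V,W,q)$ of Proposition~\ref{prop:coho}, together with Theorem~\ref{thm:coho-alg}, which says that $\gam$ is canonically recoverable from the \emph{abstract} triple. Since cohomology is functorial, every automorphism of $A(\gam)$ induces a linear automorphism of $V=H^1(A(\gam),F)$ preserving the pairing $q$ (up to the sign ambiguity in Proposition~\ref{prop:coho}(2)), and this descends to $\Out(A(\gam))$ because inner automorphisms act trivially on $H^1$. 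So there is a representation $\rho\colon\Out(A(\gam))\to \mathrm{GL}(V)$ whose image normalizes the combinatorial structure that Theorem~\ref{thm:coho-alg} attaches to $(V,W,q)$. The idea is that a finite subgroup of $\mathrm{GL}(V)$ respecting this structure essentially has to come from graph automorphisms together with sign changes on the dual classes $v^*$.

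First I would show $\rho(H)$ is nontrivial, i.e.\ that $H$ does not lie in the kernel of the action on $H^1$. The kernel of $\Out(A(\gam))\to\mathrm{GL}(H^1)$ consists of outer automorphism classes acting trivially on the abelianization $\bZ^n$; this kernel (the "$\mathrm{IA}$" part of $\Out(A(\gam))$, generated by transvections and partial conjugations) is known to be torsion-free when it is nontrivial — this follows from the standard fact that the Torelli-type subgroup of $\Out(A(\gam))$ is torsion-free, which one can extract from~\cite{CharneyVogtmann} or prove directly by a fixed-point argument on the Salvetti complex. Hence a finite subgroup $H$ injects into $\mathrm{GL}(n,\bZ)$ under $\rho$, so $\rho(H)$ is a finite nonabelian subgroup of $\mathrm{GL}(n,\bZ)$ preserving the triple $(V,W,q)$ up to the sign ambiguity in the identification $q(v_i^*,v_j^*)=\pm e_\ell^*$.

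Next I would analyze the structure $\rho(H)$ must preserve. Passing to the projectivized or "sign-blind" version of the pairing kills the $\pm$ ambiguity, and the reconstruction procedure in the proof of Theorem~\ref{thm:coho-alg} is then canonical: it attaches to $(V,W,q)$ a distinguished family of lines in $V$ (the lines spanned by the $v^*$, recovered as in Cases 1 and 2 of that proof via the rank function $v\mapsto\mathrm{rank}(f_v)$ and the subspaces $V_{\min}$), and $\gam$ is the graph on this set of lines with adjacency detected by nonvanishing of $q$. Any automorphism of $(V,W,q)$ must permute this distinguished family of lines and preserve the adjacency relation, hence induces an automorphism of $\gam$; this gives a homomorphism $\Phi\colon\rho(H)\to\Aut(\gam)$. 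The last step is to show $\Phi(\rho(H))$ is nontrivial, equivalently that $H$ does not map into the subgroup of $\mathrm{GL}(V)$ fixing every distinguished line — that subgroup consists of diagonal sign matrices on the basis $\{v_i^*\}$ (the element must fix each line and be finite order over $\bZ$, hence act by $\pm1$ on each $v_i^*$), which is abelian of exponent $2$. Since $H$ is nonabelian, $\Phi(\rho(H))$ is nonabelian, in particular nontrivial, so $\gam$ has a nontrivial automorphism.

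The main obstacle is the step asserting that the kernel of the action of $\Out(A(\gam))$ on $H^1$ is torsion-free — this is the one genuinely external input, and unlike the rest of the argument it does not follow from anything in this excerpt. If one wanted a self-contained treatment, the alternative is to work not with $H^1$ alone but with the pair $(H^1,H^2)$ and its full cup structure, and to show by a Smith-theory or cube-complex fixed-point argument that a finite-order automorphism of $A(\gam)$ fixing all the $v_i^*$ and all the $e_\ell^*$ is actually inner; I would present the cohomological version as the clean statement and cite the torsion-freeness of the Torelli subgroup of $\Out(A(\gam))$ for the technical input, noting that everything else is an exercise in the linear algebra already developed for Theorem~\ref{thm:coho-alg}.
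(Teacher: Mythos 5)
Your first step, reducing to the action on $H^1$ via a torsion-freeness statement for the kernel of $\Out(A(\gam))\to\mathrm{GL}(H^1)$, matches the paper, which cites a result of Toinet for this purpose. After that point, however, the two arguments diverge, and your route has a genuine gap.

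The gap is the assertion that the reconstruction procedure of Theorem~\ref{thm:coho-alg} ``attaches to $(V,W,q)$ a distinguished family of lines in $V$'' and that ``any automorphism of $(V,W,q)$ must permute this distinguished family of lines.'' This is false, and the paper's own remarks immediately after the proof of Theorem~\ref{thm:coho-alg} say so explicitly: when the subspace $V_{\min}$ has dimension $\ell>1$ (Case 2 of that proof), the corresponding vertices are mutually indistinguishable, and \emph{the full group $\mathrm{GL}_\ell$ acts on $V_{\min}$ preserving the triple}. In other words, Theorem~\ref{thm:coho-alg} recovers $\gam$ up to isomorphism from the abstract triple, but it does not hand you a canonical assignment of lines in $V$ to vertices of $\gam$, so there is no well-defined homomorphism $\Phi\colon\Aut(V,W,q)\to\Aut(\gam)$. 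A concrete counterexample: take $\gam$ totally disconnected, so $q\equiv 0$ and $\Aut(V,W,q)=\mathrm{GL}(V)$, which contains finite-order elements (e.g., an order-$6$ element of $\mathrm{GL}_2(\bZ)$) that stabilize no union of coordinate lines. You cannot rescue the claim by invoking the contradiction hypothesis ``$\Aut(\gam)=1$'' either: while that does force the top-level $V_{\min}$ to be one-dimensional (a higher-dimensional block would exhibit a vertex swap), the recursion then passes to a quotient triple for a smaller graph $\gam'$, and $\gam'$ may perfectly well have indistinguishable vertices even when $\gam$ does not, reintroducing the ambiguity at the next stage. A secondary slip: the ``IA'' kernel of $\Out(A(\gam))\to\mathrm{GL}(H^1)$ is \emph{not} generated by transvections together with partial conjugations — dominated transvections act nontrivially on $H_1$ and are precisely what produces the nontrivial unipotent part of the image.

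The paper instead argues directly from Laurence's generating set. Assuming $\Aut(\gam)=1$, the generators of $\Out(A(\gam))$ that survive to $\mathrm{GL}_n(\bZ)$ are vertex inversions and dominated transvections. One first shows that a cycle in the domination relation would force a nontrivial graph automorphism, hence under the hypothesis the domination relation is acyclic; ordering the vertices accordingly puts all transvection matrices strictly above the diagonal and all inversions on the diagonal, so the image of $\Out(A(\gam))$ lies in the group of upper triangular integer matrices. Any finite subgroup of that group is diagonal with entries $\pm1$, hence abelian, and Toinet's theorem promotes this to $\Out(A(\gam))$ itself. This sidesteps the need for $\Aut(V,W,q)$ to permute any set of lines. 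If you want to keep a cohomological flavor, the honest version of your approach would require proving (not asserting) that under the no-automorphism hypothesis the image of $\Out(A(\gam))$ in $\mathrm{GL}(H^1)$ is triangularizable — which is essentially what the paper's domination-cycle argument establishes.
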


The ``only if" direction follows from the discussion preceding Theorem~\ref{thm:aut}. The converse is significantly harder and requires a more
careful analysis of $\Out(A(\gam))$.

A result of M.~Laurence (\cite{Laurence95}, cf.~\cite{Servatius1989})
says that $\Aut(A(\gam))$ is generated by automorphisms of the following type.
\begin{enumerate}
\item
Vertex inversions.
\item
Graph automorphisms.
\item
Partial conjugations.
\item
Dominated transvections.
\end{enumerate}

Graph automorphisms have already been discussed, and \emph{vertex inversions}\index{vertex inversion}
have been mentioned above as arising from the map
$v\mapsto v^{-1}$ for some $v\in V(\gam)$. A \emph{partial conjugation}\index{partial conjugation}
is given by considering a vertex $v\in V(\gam)$ whose star $\st(v)$
separates $\gam$. The automorphism acts by conjugation by $v$ on one component of $\gam\setminus\st(v)$ and by the identity on
the remaining components of $\gam$.

To define~\emph{dominated transvections}\index{dominated transvection}, we say that a vertex $v\in V(\gam)$ \emph{dominates}
a vertex $w\in V(\gam)$ if $\lk(w)\subset\st(v)$.
Then, the map $w\mapsto wv$ extends to an automorphism of $A(\gam)$ via the identity on the remaining vertices. Domination 
is clearly a relation
on vertices of $\gam$ that can be determined from the combinatorics of $\gam$.

\begin{proof}[Sketch of proof of Theorem~\ref{thm:aut}]
We suppose that $\gam$ admits no nontrivial automorphisms. A theorem of Toinet
~\cite{Toinet13} implies that if $\phi\in\Aut(A(\gam))$ has finite order
then $\phi$ acts nontrivially on $H_1(A(\gam),\bZ)\cong \bZ^{|V(\gam)|}$. Thus, it suffices to consider the action of automorphisms
on $H_1(A(\gam),\bZ)$, and the effect of partial conjugations is then trivial.

Next, one shows that if there is a cycle $\{v_1,v_2,\ldots,v_k,v_1\}_{k\geq 2}$
where $v_i$ dominates $v_{i+1}$ (with the indices considered cyclically),
then $\gam$ admits a nontrivial automorphism, specifically an automorphism that exchanges two vertices of $\gam$.
It follows that no such cycles exist. We may therefore order the vertices of $\gam$ in such
a way that if $v_i<v_j$ then $v_j$ cannot dominate $v_i$. If we then write the image of $\Out(A(\gam))$ in $\mathrm{GL}_n(\bZ)$
with respect to the corresponding ordered basis for $H_1(A(\gam),\bZ)$, the result is a group of upper triangular integer matrices. Such
a group has only abelian finite subgroups (coming from diagonal matrices with entries $\pm 1$). Thus, if $\gam$ has no automorphisms
then $\Out(A(\gam))$ has only abelian finite subgroups.
\end{proof}

We remark that in the proof of Theorem~\ref{thm:aut}, one of the key observations is that a graph with a domination cycle admits a nontrivial
automorphism. The converse of this statement is false. The $5$-cycle $C_5$ admits many automorphisms, but no two vertices
dominate each other.

\subsection{Some further entries in the combinatorics--algebra dictionary}\label{ss:further}
There are a number of other results relating the combinatorics of graphs to the algebraic structure of groups which we will not
discuss in detail for the sake of space. We briefly mention two results appearing in~\cite{HermillerSunic}. Recall that a group $G$
is \emph{poly-free}\index{poly-free group} if there is a finite length 
subnormal filtration of $G$ by subgroups such that successive quotients are free.
Hermiller--\v{S}uni\'c proved that a right-angled Artin group is always poly-free, and that the length of the poly-free filtration is bounded
above by the chromatic number of the defining graph. In the same paper, they established that $A(\gam)$ is a semidirect product
of two finitely generated free groups if and only if $\gam$ is a tree or a \emph{complete bipartite graph}\index{complete bipartite graph},
which is to say a join of two completely disconnected graphs. Moreover, for a connected graph $\gam$ with at least two vertices,
the poly-free length of $A(\gam)$ is exactly two if and only
if there is a subset $D\subset V(\gam)$ such that no pair of elements of $D$ spans an edge, and every circuit in $\gam$ meets
$D$ in at least two vertices. It is an interesting direction for future research to investigate the relationship between the normal structure
of $A(\gam)$ and the combinatorics of $\gam$, and it appears that this subject is largely unexplored.

\subsection{Usefulness beyond group theory and combinatorics}\label{ss:applications}

The various correspondences between combinatorics of graphs and algebraic structures of groups have theoretical and practical applications
beyond the structural framework of Question~\ref{q:comb-intro} and its refinements. Here, we record some specific examples.

\subsubsection{Complexity of problems in combinatorial group theory}

One of the main applications of the foregoing discussion is in the domain of complexity theory, which is hardly surprising in light of the
fact that many computationally difficult problems (i.e.~NP-complete problems\index{NP-complete},
cf.~\cite{Minsky67,GJ1979,AB09}) are formulated in a finitistic way, with reference to only
combinatorial structures.

Consider a right-angled Artin group $A(\gam)$, and a homomorphism 
\[\phi\colon A(\gam)\longrightarrow F_{n_1}\times F_{n_2}\times F_{n_3},\] where
$F_{n_i}$ denotes a free group of rank $n_i$, and where \[n_1+n_2+n_3=|V(\gam)|.\] If $\gam$ is specified (e.g.~by a list of vertices and
pairs of adjacent vertices) and $\phi$ is specified in terms of
the image of each vertex of $\gam$ with respect to a fixed free basis of each of the free group factors in the target of $\phi$, then it is easy to
check if $\phi$ is a homomorphism that is surjective on the level of first rational homology.
Indeed, it suffices to check first that $\phi$ is well-defined, meaning that adjacent
vertices in $\gam$ are sent to commuting elements of $F_{n_1}\times F_{n_2}\times F_{n_3}$, which can be performed efficiently.
The latter claim results from the fact that centralizers of elements in $F_{n_1}\times F_{n_2}\times F_{n_3}$ are straightforward to
describe, and because the word problem is efficiently solvable. Then, one must check that $\phi$ is surjective on the level
of first rational homology, which is an easy linear algebra problem. In light of Theorem~\ref{thm:color-body}, the data specifying the
homomorphism $\phi$ forms a (short) certificate of the fact that $\gam$ is $3$-colorable. Since the $3$-colorability of $\gam$
and the existence of this homomorphism are equivalent, the problem of deciding whether such a homomorphism exists
is NP-complete. To state this conclusion formally:

\begin{prop}\label{prop:np-complete}
Let $\gam$ be a finite graph with $|V(\gam)|=N$, and let \[\{F_{n_1},\ldots,F_{n_k}\}\] be free groups such that 
\[\sum_{i=1}^k n_i=N.\] Write $G=\prod_i F_{n_i}$.
\begin{enumerate}
\item
If $k=2$ then the problem of
 deciding whether or not there exists a homomorphism $A(\gam)\longrightarrow G$ that is surjective on first rational homology is in P.
\item
If $k=3$ then the problem of
 deciding whether or not there exists a homomorphism $A(\gam)\longrightarrow G$ that is surjective on first rational homology is NP-complete.
 \item
 The problem of finding the minimal $k$ for which there exist free groups $\{F_{n_1},\ldots,F_{n_k}\}$ as above and a
 homomorphism $A(\gam)\longrightarrow G$ that is surjective on first rational homology is NP-complete.
\end{enumerate}
\end{prop}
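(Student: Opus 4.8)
The plan is to deduce all three assertions from the correspondence between homomorphisms $A(\gam)\to\prod_i F_{n_i}$ that are surjective on rational homology and proper colorings of $\gam$, as furnished by Theorem~\ref{thm:color-body} and the remark following it. For parts~(1) and~(2) I will need the slightly sharper form that is visible from the proof of Theorem~\ref{thm:color-body} rather than its statement: for a \emph{fixed} tuple $(n_1,\dots,n_k)$ with $\sum n_i=N$, there is a homomorphism $A(\gam)\to\prod_{i=1}^k F_{n_i}$ surjective on $H_1(-;\bQ)$ if and only if $\gam$ admits a proper $k$-coloring whose color classes have sizes exactly $n_1,\dots,n_k$ (the $n_i\times n_i$ diagonal blocks $C_i$ in that proof are precisely what record the class sizes). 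For part~(3), ``$NP$-complete'' should be read, as usual for an optimization problem, as a statement about the associated decision problem: given $\gam$ and $t$, is the minimal such $k$ at most $t$? I interpret it this way.

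\textbf{Part (1).} By the sharpened Theorem~\ref{thm:color-body} with $k=2$, we must decide, given $\gam$ and $n_1,n_2\geq 0$ with $n_1+n_2=N$, whether $\gam$ admits a proper $2$-coloring with class sizes $n_1$ and $n_2$; equivalently, whether $\gam$ is bipartite and, writing $(a_i,b_i)$ for the sizes of the two sides of the $i$-th connected component (computed by a linear-time search, which simultaneously detects the odd cycles that obstruct bipartiteness), there is a choice $s_i\in\{a_i,b_i\}$ with $\sum_i s_i=n_1$. The last condition is an instance of subset sum, but all the numbers $a_i,b_i$ and the target $n_1$ are at most $N$, so the usual dynamic program --- maintaining a Boolean array indexed by $0,\dots,N$ recording the attainable values of $\sum_i s_i$ and updating it one component at a time --- runs in time $O(N^2)$. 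Hence the problem is in P.

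\textbf{Parts (2) and (3).} Membership in $NP$ is immediate: a proper $k$-coloring of $\gam$ (with the prescribed class sizes in case~(2); with $\le t$ colors in case~(3)) is a polynomial-size certificate, verified in polynomial time, and by (the sharpened) Theorem~\ref{thm:color-body} such a coloring exists exactly when the required homomorphism does --- alternatively one may take the homomorphism itself as certificate and check well-definedness and surjectivity on $H_1(-;\bQ)$ in polynomial time, as in the discussion preceding the statement. For part~(3), Theorem~\ref{thm:color-body} shows the minimal such $k$ equals the chromatic number $\chi(\gam)$ (any proper coloring can be refined by splitting classes to use exactly $k$ nonempty classes for any $\chi(\gam)\le k\le N$), so the decision problem is literally graph $t$-colorability, which is $NP$-complete (it contains $3$-colorability as the case $t=3$). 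For part~(2) with fixed $(n_1,n_2,n_3)$, $NP$-hardness follows by reduction from $3$-colorability: given a graph $H$ on $M$ vertices, let $H'$ be $H$ together with $2M$ isolated vertices, so $|V(H')|=3M=M+M+M$, and use the target tuple $(M,M,M)$. If $H$ has a proper $3$-coloring with class sizes $(c_1,c_2,c_3)$ then each $c_i\le M$, and adjoining $M-c_i$ of the $2M$ isolated vertices to class $i$ gives a proper $3$-coloring of $H'$ with class sizes $(M,M,M)$; conversely restriction three-colors $H$. Via the sharpened Theorem~\ref{thm:color-body} this makes deciding the existence of a homomorphism $A(H')\to F_M\times F_M\times F_M$ surjective on $H_1(-;\bQ)$ an $NP$-hard, hence $NP$-complete, problem, and a fortiori the same holds for arbitrary admissible $(n_1,n_2,n_3)$.

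\textbf{Main obstacle.} Everything reduces to the colorability dictionary once one has it in the right form, so the genuinely non-routine points are three. First, one must extract from the \emph{proof} of Theorem~\ref{thm:color-body} the refinement that pins down the color-class sizes, since it is this that lets the $n_i$ be treated as fixed input. Second --- and this I expect to be the subtlest --- in part~(1) one must notice that the ``prescribed part sizes'' condition, though superficially an $NP$-hard subset-sum problem, has all of its numerical data bounded by the instance size $N$, and is therefore settled in polynomial time by the pseudopolynomial dynamic program. Third, in part~(2) the padding-by-isolated-vertices reduction is what converts plain $3$-colorability into $3$-colorability with a prescribed (balanced) class-size vector. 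The remaining content is bookkeeping with tools already in place.
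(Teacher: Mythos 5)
Your proof is correct, and it takes a noticeably more careful route than the paper, which treats this proposition as a corollary of the discussion immediately preceding it rather than giving a formal argument. That discussion implicitly treats the ranks $n_1,\ldots,n_k$ as existentially quantified, so that the decision problem becomes literally $k$-colorability of $\gam$, and the NP-completeness of part~(2) is then read off from the NP-completeness of $3$-colorability, with parts~(1) and~(3) left as parallel remarks about $2$-colorability and the chromatic number. You instead read the ranks $(n_1,\dots,n_k)$ as part of the input, which is the more literal parsing of the statement and gives a genuinely stronger result. This forces three extra ideas not present in the paper's sketch: the refinement of Theorem~\ref{thm:color-body} that pins down the color-class sizes (which you correctly extract from the invertible diagonal blocks $C_i$ in its proof, and which survives the relaxation to surjectivity on $H_1(-;\bQ)$); the pseudopolynomial subset-sum dynamic program needed to show part~(1) is still in P when the bipartition class sizes are prescribed; and the padding-by-isolated-vertices reduction that converts plain $3$-colorability into $3$-colorability with a balanced class-size vector, which the paper's appeal to $3$-colorability does not by itself supply. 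Both approaches are sound for what they set out to prove; yours proves the version of the proposition where $G$ is fixed, which subsumes the version where $G$ varies. One small wording point: your concluding ``a fortiori the same holds for arbitrary admissible $(n_1,n_2,n_3)$'' is correct if read as ``the problem with $(n_1,n_2,n_3)$ as input is NP-hard because a restricted family of its instances already is,'' but it could be misread as asserting NP-hardness for every individual fixed tuple, which your reduction does not (and need not) establish.
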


Finding explicit examples of NP-complete problems is always of interest in complexity theory, and given the profusion of them in
graph theory, Proposition~\ref{prop:np-complete} is just a taste of the richness of the available theory arising in the context of groups.

\subsubsection{Hamiltonicity testing}

Continuing in the theme of NP-complete problems, it is well-known that deciding if a finite graph admits a Hamiltonian path or Hamiltonian
cycle is NP-complete, as we have mentioned above. The ideas surrounding Theorem~\ref{thm:hamiltonian} can be used to certify
that certain graphs are not Hamiltonian, in a purely finitistic linear algebraic way.

To expand on this a bit, first note that the field over which cohomology is considered is arbitrary. In particular, we may assume that
the underlying field is just the field with two elements. Under this assumption, all the relevant vector spaces become finite sets, and are
hence amenable to combinatorial techniques.

Consider then the standard cohomological triple $(V,W,q)$ for a right-angled Artin group
$A(\gam)$. In order to show that $\gam$ is not Hamiltonian,
it suffices to find a single basis for $V$ which witnesses the claim that $(V,W,q)$ is not Hamiltonian. Thus, such a basis can be used
as a short certificate that a graph contains no Hamiltonian circuit.

\subsubsection{Linear algebraic detection of graph expanders}

Considering cohomology with coefficients in a field with two elements allows a finitary and algebraic way to check if a sequence of
graphs is a family of expanders. Moreover, it is shown in~\cite{FKK2020exp}
that there are families of vector space expanders that do not arise from
the cohomology of families of graph expanders. Thus, the theory of vector space expanders is \emph{a priori} richer than the
theory of graph expanders. Some practical applications of expanders can be found in~\cite{CLG,GILVZ}, for instance.

\subsubsection{Interactive proof systems}

Many interactive proof systems function as a way for a prover to demonstrate a proposition to a skeptical verifier. Using an unbiased
random bit sent by the verifier, the prover sends a response that is conditioned on the value
of the random bit. In this way, the verifier's ignorance of the prover's 
private information is balanced by the prover's ignorance of the value of the
bit that will be sent by verifier, and this balance can be used to communicate the existence of knowledge without revealing its content.
This is, for instance, the idea behind \emph{zero-knowledge proof protocols}\index{interactive proof},
in which the prover holds a certificate
for an instance of an NP-complete problem, and convinces the verifier of the fact that she is in possession of a valid certificate
without revealing the certificate itself.
Any NP-complete problem can be used as a platform. Thus, linear algebraic versions of
Hamiltonicity as in Theorem~\ref{thm:hamiltonian} and Proposition~\ref{prop:np-complete} are suitable for formulating a zero-knowledge
proof protocol. A detailed explanation of a platform using Theorem~\ref{thm:hamiltonian} is given in~\cite{FKK2021ham}.
For general background
on interactive proofs and zero-knowledge proof protocols, we refer the reader to~\cite{Blum87,rosen-book,AB09,GMW91,BabaiMoran}.

\subsubsection{Group-based cryptosystems}

Many cryptosystems rely on computational problems that are difficult to solve directly, which is why many modern cryptographic
protocols assume $P\neq NP$. The theme of this section has been the translation of combinatorial properties of graphs, and especially
computationally interesting ones, into algebraic language. This immediately suggests numerous potential
group-based cryptosystems\index{group-based cryptography},
a topic which has been developing rapidly in recent decades. Explicit cryptosystems using right-angled Artin groups as a platform have
been proposed in~\cite{FK2016}, for example. Translating the graph homomorphism problem (which is NP-complete) into
an instance of the subgroup homomorphism problem for right-angled Artin groups, one can formulate a secure
authentication scheme, for instance. For further discussion of specific cryptosystems and for a biased sample of the literature,
we direct the reader to
~\cite{FKK2020-pub,FKK2019,MSU08,KoLee00,KahShp15,FKR12}.

\section{The extension graph and its properties}

We now leave the world of the finite graph $\gam$ and its relationship with $A(\gam)$, and turn to the (usually) infinite extension graph
$\gam^e$. We recall that $\gam^e$ is a development of $\gam$ into a graph on which $A(\gam)$ acts by conjugation. So, we fix an
identification of the vertices of $\gam$ with generators for $A(\gam)$, set the vertices of $\gam^e$ to be the collection of all conjugates
of $V(\gam)$ by elements of $A(\gam)$, and set the edge relation to be commutation inside of $A(\gam)$. The reader will find that
the ideas here, though still fundamentally relating combinatorics to algebra, are quite different from those in Section~\ref{sec:comb}.

\subsection{Basic properties of the extension graph}

Some properties of the extension graph are easy to prove. For instance:

\begin{prop}
The extension graph $\gam^e$ is finite if and only if $\gam$ is complete.
\end{prop}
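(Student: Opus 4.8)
The plan is to prove both implications by analyzing conjugacy classes of vertex generators in $A(\gam)$. First I would dispatch the easy direction: if $\gam$ is complete, then $A(\gam) \cong \bZ^n$ is abelian, so every conjugate $v^g$ of a vertex generator equals $v$ itself. Hence $V(\gam^e) = V(\gam)$ is finite, and $\gam^e = \gam$ is a finite graph.

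For the converse, I would prove the contrapositive: if $\gam$ is not complete, then $\gam^e$ has infinitely many vertices. Since $\gam$ is not complete, there are two vertices $v, w \in V(\gam)$ with $\{v,w\} \notin E(\gam)$. The key claim is that the conjugates $\{w^{v^k} \mid k \in \bZ\}$ are pairwise distinct elements of $A(\gam)$, which immediately gives infinitely many vertices of $\gam^e$. To see this, note that $w^{v^k} = v^{-k} w v^k$, and because $v$ and $w$ do not commute, the word $v^{-k} w v^k$ is already cyclically reduced and has support $\{v, w\}$ for every $k \neq 0$; any equality $w^{v^k} = w^{v^\ell}$ in $A(\gam)$ would force $v^{\ell - k}$ to centralize $w$, which by Theorem~\ref{thm:cent} (the centralizer of the cyclically reduced element $w$ lies in $\langle \supp(w) \cup \lk(\supp(w))\rangle = \langle \st(w)\rangle$, and $v \notin \st(w)$) is impossible unless $\ell = k$. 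Thus the map $k \mapsto w^{v^k}$ is injective, and $V(\gam^e)$ is infinite.

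The main obstacle is making precise the assertion that $v^{-k} w v^k$ is a reduced, indeed cyclically reduced, expression and that distinct powers give distinct group elements. This is exactly the kind of normal-form statement about right-angled Artin groups that is invoked (but not proved) in the excerpt: free reductions together with commutation moves solve the word problem, and the support of a reduced word is well-defined. Granting these facts, the reduced form of $v^{-k} w v^k$ cannot be shortened since $v$ and $w$ fail to commute, so no cancellation across the $w$ is possible; hence its support is $\{v,w\}$ and it is nontrivial for all $k$. Alternatively, one can avoid normal-form bookkeeping entirely by applying the retraction $A(\gam) \to A(\{v,w\}) \cong F_2$ from Proposition~\ref{prop:retraction}: the images of the $w^{v^k}$ in the free group $F_2 = \langle v, w\rangle$ are the obviously distinct elements $v^{-k} w v^k$, so the original elements in $A(\gam)$ were already distinct. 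This second route is cleanest and is the one I would write up, as it reduces everything to the trivial fact that distinct conjugates $v^{-k}wv^k$ are distinct in a free group.
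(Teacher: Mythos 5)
The paper states this proposition without proof, as one of the ``easy'' properties of the extension graph, so there is no model argument to compare against. Your proof is correct. The easy direction is handled exactly as it should be. For the converse, both of your routes are valid: the centralizer argument works because $\supp(w)=\{w\}$, so by Theorem~\ref{thm:cent} the centralizer of $w$ lies in $\langle \st(w)\rangle$, and $v^{\ell-k}$ (with $\ell\neq k$) has support $\{v\}\not\subset\st(w)$, hence cannot lie in that subgroup. But the retraction route via Proposition~\ref{prop:retraction} is, as you say, cleaner: pushing the conjugates $w^{v^k}$ forward along $A(\gam)\to\langle v,w\rangle\cong F_2$ makes the distinctness immediate without invoking any normal-form or centralizer machinery, and is the argument I would keep. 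One small stylistic point: you only need the forward image to separate the elements, so you never actually need to verify that $v^{-k}wv^k$ is cyclically reduced in $A(\gam)$; the centralizer discussion can be dropped entirely if you commit to the retraction argument.
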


Others are somewhat less obvious. We note some which will be useful in the sequel, and which otherwise will give the reader a better
idea of how the extension graph functions.

\begin{prop}[See~\cite{KK2013}]\label{prop:gex-basic}
The extension graph $\gam^e$ enjoys the following properties:
\begin{enumerate}
\item
The graph $\gam^e$ is connected if and only if $\gam$ is connected.
\item
The graph $\gam^e$ is connected and of infinite diameter if and only if $\gam$ is connected, has at least two vertices, and is not a join.
\item
The size of a maximal clique in $\gam$ and $\gam^e$ coincide.
\item
If $\Lambda$ is a subgraph of $\gam$ then $\Lambda^e$ is a subgraph of $\gam^e$.
\item
The graph $\gam^e$ is $k$-colorable if and only if $\gam$ is $k$-colorable.
\end{enumerate}
\end{prop}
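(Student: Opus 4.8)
The plan is to treat the five items more or less independently, leaning on two recurring tools. The first is the retraction $A(\gam)\to A(S)$ onto a full subgraph $S$ (Proposition~\ref{prop:retraction}), together with the description of the centralizer of a vertex generator, $C_{A(\gam)}(v)=\langle v\rangle\times A(\lk(v))=A(\st(v))$. The second is the abelianization $A(\gam)\to\bZ^{V(\gam)}$, which sends every conjugate $v^g$ to the basis vector $e_v$; this shows that the ``type'' $v\in V(\gam)$ of a vertex $v^g\in V(\gam^e)$ is well defined, and in particular that distinct vertex generators are never conjugate in $A(\gam)$. I would first isolate one small lemma for reuse: \emph{no two distinct conjugates of the same vertex generator commute.} Indeed, if $[v^g,v^h]=1$, then conjugating by $g^{-1}$ gives $v^f\in C(v)=A(\st(v))$ with $f=hg^{-1}$; applying the retraction onto $\st(v)$ lets us replace $f$ by an element of $A(\st(v))=\langle v\rangle\times A(\lk(v))$, and then $v^f=v$ since $v$ commutes with both factors, so $v^g=v^h$.

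For item (1), the reverse implication is the clean one: $\gam$ sits in $\gam^e$ as a full subgraph, so if $\gam$ is connected then all of $V(\gam)$ lies in one component $\CC_0$ of $\gam^e$; each vertex generator $s$, acting by conjugation, is a graph automorphism of $\gam^e$ fixing the vertex $s\in\CC_0$, hence preserves $\CC_0$, hence so does every element of $A(\gam)$, and since every vertex of $\gam^e$ is in the $A(\gam)$–orbit of some vertex of $\gam$, we get $\gam^e=\CC_0$. For the forward implication, if $\gam=\gam_1\sqcup\gam_2$ then $A(\gam)=A(\gam_1)*A(\gam_2)$ (Theorem~\ref{thm:free-prod}); on the associated Bass–Serre tree $T$ (trivial edge stabilizers) every nontrivial $v^g$ fixes a \emph{unique} vertex, commuting elliptic elements share a fixed vertex, so the resulting map $V(\gam^e)\to V(T)$ is constant on components while vertices from $\gam_1$ and from $\gam_2$ land on different vertices of $T$. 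For item (2), the ``easy'' direction is the computation: if $\gam=\gam_1*\gam_2$ then $A(\gam)=A(\gam_1)\times A(\gam_2)$, and writing $g=g_1g_2$ one sees $v_1^g=v_1^{g_1}$ for $v_1\in V(\gam_1)$, so $V(\gam^e)=V(\gam_1^e)\sqcup V(\gam_2^e)$ with all cross pairs adjacent; thus $\gam^e$ is itself a join, of diameter $\le 2$ (and $|V(\gam)|\le 1$ is immediate). \textbf{The main obstacle is the converse of (2):} showing that $\gam$ connected, not a join, with at least two vertices forces $\diam\gam^e=\infty$. The strategy is to pick a cyclically reduced $g$ with $\supp(g)=\gam$; since $\gam$ is not a join, Theorem~\ref{thm:cent} gives that $g$ has cyclic centralizer, and one then shows the orbit $\{v^{g^n}\}$ is unbounded in $\gam^e$ via a normal-form/length estimate. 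That last estimate is genuinely technical, so in the survey I would describe the mechanism and cite \cite{KK2013} (it is intertwined with the loxodromic characterization of Theorem~\ref{thm:acyl-intro}).

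For item (3), $\gam\subseteq\gam^e$ gives ``$\le$'' for $\gam$'s clique size; conversely a $k$--clique $\{u_1,\dots,u_k\}$ in $\gam^e$ consists of pairwise commuting conjugates of \emph{pairwise distinct} vertex generators (by the preliminary lemma), so their images $e_{v_1},\dots,e_{v_k}$ in $\bZ^{V(\gam)}$ are independent, the $u_i$ generate an abelian subgroup of rank $\ge k$, and Theorem~\ref{thm:max-ab} bounds $k$ by the maximal clique size of $\gam$. For item (4), a full subgraph inclusion $\Lambda\subseteq\gam$ comes with the retraction $A(\gam)\to A(\Lambda)$, so the inclusion $A(\Lambda)\hookrightarrow A(\gam)$ is injective; hence the obvious assignment $v^g\mapsto v^g$ ($v\in V(\Lambda)$, $g\in A(\Lambda)$) is injective on $V(\Lambda^e)$, and since $[v^g,w^h]=1$ holds in $A(\Lambda)$ iff it holds in $A(\gam)$, this map both preserves and reflects adjacency, exhibiting $\Lambda^e$ as a full subgraph of $\gam^e$.

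For item (5), restricting a coloring along $\gam\subseteq\gam^e$ gives one direction. For the other, given a proper $k$--coloring $\kappa$ of $\gam$, set $\tilde\kappa(v^g):=\kappa(v)$, which is well defined because the type $v$ is (by the abelianization remark above). If $\{v^g,w^h\}$ is an edge of $\gam^e$ then, conjugating by $g^{-1}$, we get $[v,w^f]=1$ with $f=hg^{-1}$, so $w^f\in C(v)=A(\st(v))$; applying the retraction onto $\st(v)$ shows $w\in\st(v)$ (otherwise $w^f=1$), and the preliminary lemma rules out $w=v$ (that would force $v^g=v^h$, not a genuine edge), leaving $w\in\lk(v)$, hence $\{v,w\}\in E(\gam)$ and $\kappa(v)\ne\kappa(w)$. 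Therefore $\tilde\kappa$ is a proper $k$--coloring of $\gam^e$, completing the sketch; the only nonroutine ingredient anywhere in the proposition is the unbounded-orbit argument flagged above.
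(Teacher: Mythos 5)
Your arguments for items (1), (3), (4), (5), and the easy direction of (2), are correct and in fact more complete than what the survey records (the paper only sketches item (2) and cites \cite{KK2013} for the rest). Your preliminary lemma --- distinct conjugates of the same vertex generator never commute --- is correctly established via the retraction $A(\gam)\to A(\st(v))$ combined with the centrality of $v$ in $A(\st(v))$, and it does genuine work in (3) and (5). The Bass--Serre argument for the forward direction of (1) is sound (trivial edge stabilizers give a unique fixed vertex per nontrivial elliptic, and commuting elliptics share that vertex), and the reverse direction via ``conjugation by a generator is a graph automorphism of $\gam^e$ fixing that generator, hence preserves its component'' is clean.

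Where you genuinely diverge from the paper is the hard direction of (2). The survey's sketch is topological: it embeds $A(\gam)$ into $\Mod(\Sigma)$ as high powers of Dehn twists (using \cite{Koberda2012}), simultaneously realizes $\gam^e$ as a subgraph of the curve graph $\CC(\Sigma)$, finds a pseudo-Anosov $g\in A(\gam)$, and imports unboundedness of the orbit $\{v^{g^n}\}$ from the $1$--Lipschitz map $\gam^e\to\CC(\Sigma)$. You instead propose a purely intrinsic route: choose a cyclically reduced $g$ with $\supp(g)=V(\gam)$, use Theorem~\ref{thm:cent} and the no-join hypothesis to see that $g$ has cyclic centralizer, and show the orbit $\{v^{g^n}\}$ escapes to infinity via a normal-form length estimate. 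Both approaches are ultimately aimed at showing $g$ is loxodromic in the sense of Theorem~\ref{thm:acyl}; the paper's buys the unboundedness from known curve-graph dynamics at the cost of invoking mapping-class-group machinery, while yours stays inside $A(\gam)$ at the cost of the combinatorial estimate you correctly flag as the one nonroutine ingredient and defer to \cite{KK2013}. Both are legitimate, and your identification of where the real work lies is accurate.
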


The proof of item (2) of Proposition~\ref{prop:gex-basic}
we will provide probably illustrates the diversity of methods that can be used in investigating right-angled Artin
groups.

\begin{proof}[Sketch of proof of Proposition~\ref{prop:gex-basic}, (2)]
Consider a collection of disjoint compact annuli $\{A_v\mid v\in V(\gam)\}$, one
for each vertex of $\gam$. Glue two such annuli $A_v$ and $A_w$ together along a disk if the vertices
$v$ and $w$ are \emph{not} adjacent in $\gam$. We do this
in such a way that the result is an orientable surface $\Sigma$
 with boundary. A key observation is that since $\gam$ is not a join, its complement
graph $X$ is connected. Therefore, $\Sigma$ is a connected surface. Since $\Sigma$ was built out of at least two annuli, an easy
Euler characteristic computation shows that $\Sigma$ is of hyperbolic type (i.e.~admits a complete hyperbolic metric of finite volume).
We will name the core curves of the annuli in the construction $\{\gamma_1,\ldots,\gamma_n\}$.

The (isotopy class of the) homeomorphism of $\Sigma$ given by cutting $\Sigma$ open along $\gamma_i$ and re-gluing with a full
right-handed twist is called a (right-handed) \emph{Dehn twist}\index{Dehn twist} about $\gamma_i$, and is denoted by $T_i$.
Recall that the group of isotopy classes of 
(orientation preserving) homeomorphisms of $\Sigma$ is called the \emph{mapping class group}\index{mapping class group}
of $\Sigma$, and is
written $\Mod(\Sigma)$~\cite{FM2012}. A result of the author~\cite{Koberda2012}
shows that there is an $N>0$ such that for all $k\geq N$, the subgroup of $\Mod(\Sigma)$ generated by
$\{T_1^k,\ldots,T_n^k\}$ is isomorphic to $A(\gam)$.

The surface $\Sigma$ has an associated \emph{curve graph}\index{curve graph}
$\CC(\Sigma)$, which is of infinite diameter. This curve graph consists of isotopy classes
of embedded, essential, nonperipheral loops on $\Sigma$, with the edge relation being disjoint realization. There are certain mapping
classes $\psi$ which have the property that for any vertex $c$ of $\CC(\Sigma)$, the graph distance between $c$ and $\psi^k(c)$ tends
to infinity as $k$ tends to infinity~\cite{Thurston-bull88}.
These mapping classes are called \emph{pseudo-Anosov}\index{pseudo-Anosov mapping class}, and are typical inside of $\Mod(\Sigma)$.

In particular, realizing $A(\gam)<\Mod(\Sigma)$ as above, there is an element $g\in A(\gam)$ whose realization as a mapping class is
pseudo-Anosov. Moreover, the realization $A(\gam)<\Mod(\Sigma)$ is compatible with a realization of $\gam^e\subset\CC(\Sigma)$.
Specifically, if $v\in V(\gam)$ is associated to a Dehn twist about $\gamma_i$ and if $h\in A(\gam)$ corresponds to the mapping class
group $\psi_h$, then the vertex $v^h$ is sent to $\psi_h(\gamma_i)$.

Now, since we have a map $\gam^e\longrightarrow\CC(\Sigma)$ which respects the edge relation, general facts about graph homomorphisms
 imply that it cannot be
distance increasing. Thus, if $d_{\CC(\Sigma)}(\gamma_i,\psi_h^k(\gamma_i))$ tends to infinity then $d_{\gam^e}(v,v^{h^k})$ also
tends to infinity. The conclusion now follows.
\end{proof}

It turns out that mapping class groups of surfaces are extremely useful tools for probing right-angled Artin groups, and that many of their
properties can be paired analogously. This is a theme that will recur in this section, and we will comment more on it below.

\subsection{The extension graph and subgroups}

One useful property of the extension graph, and for which it was developed in the first place, is that the extension graph classifies
right-angled Artin subgroups of of a right-angled Artin group. Classically, we know that subgroups of finitely generated free abelian groups
are again free abelian (by the classification of finitely generated modules over a principal ideal domain)
 and subgroups of free groups are always free (by the Nielsen--Schreier Theorem).
Since right-angled Artin groups interpolate
between these two extremes, it is therefore a natural question whether
(finitely generated) subgroups of right-angled Artin groups are again right-angled Artin
groups, and if so what sorts of right-angled Artin groups they are.

It is not true that subgroups of right-angled Artin groups are again right-angled Artin groups. There are many different subgroups of
right-angled Artin groups, ranging from surface groups
~\cite{SDS1989,CW2004} to hyperbolic $3$-manifold groups~\cite{Agol2008,Agol2013,Wise2009,Wise2011,Wise2012}
 to many arithmetic lattices in rank one Lie groups~\cite{BHW2011},
all the way to groups with various exotic finiteness properties~\cite{BestBrady97}.
It is in fact known that every finitely generated subgroup of $A(\gam)$ is again a right-angled Artin group if and only if $\gam$ has no
subgraph isomorphic to a square or to a path of length three, by a result of Droms~\cite{Droms1987-sub}.

It is difficult to characterize all subgroups of right-angled Artin groups, even finitely presented ones (see~\cite{Brid-MRL}).
Some general known facts are that every nonabelian subgroup
of a right-angled Artin group contains a nonabelian free group by a result of Baudisch~\cite{baudisch},
and in fact any such subgroup surjects to a nonabelian free group by a result of Antol\'in--Minasyan~\cite{ant-min}. A nonabelian subgroup
of a right-angled Artin group must surject to $\bZ^2$~\cite{duch-krob,Koberda-survey}. Solvable subgroups of right-angled Artin groups
are automatically finitely generated and free abelian, by the Flat Torus Theorem~\cite{BH1999}.

Given the difficulty of understanding general subgroups of right-angled Artin groups,
it is therefore interesting and natural to wonder which subgroups of $A(\gam)$ are of the form $A(\Lambda)$, and what sorts of 
graphs $\Lambda$
can occur. To the author's knowledge, there is no clean, complete answer available, though the partial answers are satisfying and useful
for many applications.

\begin{thm}\label{thm:gex1}
Let $\Lambda<\gam^e$ be a finite subgraph. Then there is an injective homomorphism $A(\Lambda)\longrightarrow A(\gam)$.
\end{thm}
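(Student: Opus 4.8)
The plan is to represent each vertex of $\Lambda$ by the conjugate $w_j\in A(\gam)$ of a vertex generator that it is (by the definition of $\gam^e$), and to show that for a large enough power $M$ the elements $w_1^M,\dots,w_k^M$ generate a copy of $A(\Lambda)$. For every $M$ the assignment sending the $j$-th standard generator of $A(\Lambda)$ to $w_j^M$ is a well-defined homomorphism $A(\Lambda)\to A(\gam)$, since adjacent vertices of $\Lambda$ commute in $A(\gam)$ by construction of $\gam^e$, hence so do their powers; the entire content of the theorem is that for $M\gg 0$ this homomorphism is \emph{injective} --- that is, that the $w_j^M$ satisfy no relations other than the prescribed commutations.

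\emph{Reductions.} If $\gam$ has at most one vertex, or is a clique, then $\gam^e=\gam$ and the claim is immediate (via Proposition~\ref{prop:retraction} in the clique case). For the general case, I would choose a connected graph $\gam^+$ that contains $\gam$ as a full subgraph, has at least two vertices, and is \emph{not} a join --- this is elementary to arrange, e.g.\ by adjoining a small tree that links up the components of $\gam$. By Proposition~\ref{prop:retraction}, $A(\gam)$ is a retract, in particular a subgroup, of $A(\gam^+)$, so the elements $w_1,\dots,w_k\in A(\gam)\le A(\gam^+)$ are conjugates of vertex generators of $\gam^+$ as well, hence vertices of $(\gam^+)^e$; and because commutation in $A(\gam^+)$ restricts to commutation in $A(\gam)$, the full subgraph of $(\gam^+)^e$ they span is again $\Lambda$. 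Since the argument below, applied to $\gam^+$, produces an $M$ with $\langle w_1^M,\dots,w_k^M\rangle\cong A(\Lambda)$, and since each $w_j$ lies in $A(\gam)$, this copy of $A(\Lambda)$ lies in $A(\gam)$; so it suffices to treat the case that $\gam$ is connected, not a join, and has at least two vertices. (When $\gam$ is a nontrivial join $\gam_1*\gam_2$ one can alternatively argue directly, using $\gam^e=\gam_1^e*\gam_2^e$ and induction on $|V(\gam)|$.)

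\emph{The main case via mapping class groups.} Now assume $\gam$ is connected, not a join, with $\ge 2$ vertices, and use the realization built in the proof of Proposition~\ref{prop:gex-basic}(2): there is a finite-type hyperbolic surface $\Sigma$, curves $\gamma_1,\dots,\gamma_n$ indexed by $V(\gam)$, and $k>0$ with an injection $A(\gam)\hookrightarrow\Mod(\Sigma)$ sending $v_i\mapsto T_{\gamma_i}^{k}$, compatible with an embedding $\gam^e\hookrightarrow\CC(\Sigma)$ sending $v_i^h$ to the curve $\psi_h(\gamma_i)$. Let $\delta_j\in\CC(\Sigma)$ be the curve corresponding to the vertex $w_j$ of $\Lambda$; then the image of $w_j$ in $\Mod(\Sigma)$ is $T_{\delta_j}^{\pm k}$, and since $A(\gam)\hookrightarrow\Mod(\Sigma)$ is injective while the edge relation is disjointness on both sides, the $\delta_j$ are pairwise non-isotopic and their disjointness graph is exactly $\Lambda$. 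Koberda's theorem on powers of mapping classes~\cite{Koberda2012} now furnishes an $N>0$, which we may take divisible by $k$, with $\langle T_{\delta_1}^{N},\dots,T_{\delta_k}^{N}\rangle\cong A(\Lambda)$; since the elements $T_{\delta_j}^{N}$ are the images of $w_j^{N/k}\in A(\gam)$, the restriction of $A(\gam)\hookrightarrow\Mod(\Sigma)$ to $\langle w_1^{N/k},\dots,w_k^{N/k}\rangle$ is injective with image $\cong A(\Lambda)$, and we obtain $A(\Lambda)\cong\langle w_1^{N/k},\dots,w_k^{N/k}\rangle\le A(\gam)$.

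\emph{Where the difficulty lies.} Everything above is bookkeeping except for the single substantial input~\cite{Koberda2012}, that sufficiently high powers of the Dehn twists $T_{\delta_j}$ generate $A(\Lambda)$ rather than a proper quotient; its proof rests on Masur--Minsky subsurface projection and a ping-pong argument on $\CC(\Sigma)$ controlling the normal forms of words in the $T_{\delta_j}^{N}$. An alternative route avoiding surfaces would work inside the universal cover $X$ of the Salvetti complex, a CAT(0) cube complex on which each $w_j$ acts as a combinatorial translation: commuting pairs among the $w_j$ stabilize a common combinatorial flat, non-commuting pairs have divergent axes, and after passing to high powers one would invoke a ping-pong lemma tailored to graph products to identify $\langle w_1^M,\dots,w_k^M\rangle$ with $A(\Lambda)$. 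In that formulation the hard part becomes the graph-product ping-pong criterion together with a quantitative separation estimate for the relevant convex ``ping-pong regions'' under high powers. In both approaches the crux --- and the reason the theorem is not entirely formal --- is the passage from ``the $w_j^M$ satisfy the right commutations'' to ``the $w_j^M$ satisfy \emph{only} the right relations''.
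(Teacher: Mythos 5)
Your proof follows exactly the route the paper indicates in the paragraph after the theorem statement: embed $\gam^e$ into $\CC(\Sigma)$ via the Dehn-twist realization from the proof of Proposition~\ref{prop:gex-basic}(2), then invoke the result of~\cite{Koberda2012} that sufficiently high powers of Dehn twists generate a right-angled Artin group on the disjointness graph of the underlying curves. You have additionally made explicit the reduction to the connected, non-join case (which the paper leaves implicit), and your fallback via $\gam^e=\gam_1^e*\gam_2^e$ together with induction correctly handles the join case where the naive ``adjoin a tree'' enlargement might not destroy the join structure.
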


The injection in Theorem~\ref{thm:gex1} is quite explicit; one simply views vertices of $\Lambda$ as elements in $A(\gam)$ and passes
to a sufficiently high power. Theorem~\ref{thm:gex1} first appeared in a paper of Kim and the author
~\cite{KK2013}, though apparently this fact
was already known to experts in combinatorial group theory. One approach to proving Theorem~\ref{thm:gex1} does not require ideas beyond
those that go into item (2) of Proposition~\ref{prop:gex-basic}. Once the extension graph has been embedded in the curve graph in a way
that preserves both adjacency and non-adjacency, the author's result from
~\cite{Koberda2012} about powers of mapping classes applies and gives
the desired result.

Unfortunately, Theorem~\ref{thm:gex1} does not admit an easy converse. The first examples disproving the
obvious na\"ive converse appeared
in the work of Casals-Ruiz--Duncan--Kazachkov~\cite{CDK2013},
and a large class of examples was produced by Kim and the author~\cite{KK2015GT}. With some
further assumptions on $\gam$, one can formulate a converse to Theorem~\ref{thm:gex1}.

\begin{thm}[See~\cite{KK2013}]\label{thm:gex2}
Suppose $\gam$ has no $3$-cliques, and suppose that $A(\Lambda)<A(\gam)$. Then $\Lambda$ is a subgraph of $\gam^e$.
\end{thm}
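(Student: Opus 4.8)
The plan is to prove, by induction on $|V(\gam)|$ and for all finite graphs $\Lambda$ simultaneously, that an embedding $A(\Lambda)\hookrightarrow A(\gam)$ with $\gam$ triangle--free forces a full copy of $\Lambda$ inside $\gam^e$; combined with Theorem~\ref{thm:gex1} this is precisely Theorem~\ref{thm:subgroup-intro}. One reduction is immediate: since $\gam$ has no $3$--clique, Theorem~\ref{thm:max-ab} shows that $A(\gam)$, and hence $A(\Lambda)$, contains no $\bZ^3$, so $\Lambda$ is itself triangle--free. The base cases $|V(\gam)|\le 2$, where $A(\gam)$ is trivial, $\bZ$, $F_2$, or $\bZ^2$, are dispatched directly, using that conjugates of vertex generators in these groups are extremely restricted (for instance in $\bZ^2$ each vertex generator is its own only conjugate, so $\gam^e=\gam$).

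Next I would handle disconnected $\gam$. Write $\gam=\gam_1\sqcup\gam_2$, so $A(\gam)=A(\gam_1)*A(\gam_2)$ by Theorem~\ref{thm:free-prod}; decomposing $\Lambda$ into its connected components $\Lambda_j$ gives $A(\Lambda)=*_jA(\Lambda_j)$ with each $A(\Lambda_j)$ freely indecomposable (Theorem~\ref{thm:free-prod} again), so by the Kurosh subgroup theorem each $A(\Lambda_j)$ is conjugate into $A(\gam_1)$ or $A(\gam_2)$ and, by induction, each $\Lambda_j$ embeds fully in $\gam_1^e$ or $\gam_2^e$. A short computation shows that conjugates of vertices lying in different free factors never commute, and neither do conjugates of vertices in the same factor by elements of distinct cosets; hence $\gam^e$ is a disjoint union of isomorphic copies of $\gam_1^e$ and of $\gam_2^e$, and placing the various $\Lambda_j$ into distinct copies produces a full copy of $\Lambda$ in $\gam^e$.

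For connected $\gam$, apart from the case in which $\gam$ is a star (treated directly: there $A(\gam)=\bZ\times F_n$ and $\gam^e$ is an infinite star), choose $v\in V(\gam)$ with $\st(v)\neq\gam$. Then $\lk(v)\neq\emptyset$ and $\gam=(\gam\setminus v)\cup_{\lk(v)}\st(v)$ realizes $A(\gam)$ as the amalgam $A(\gam\setminus v)*_{A(\lk(v))}A(\st(v))$ with $A(\st(v))=\langle v\rangle\times A(\lk(v))$. Let $T$ be its Bass--Serre tree and examine the action of $H:=A(\Lambda)$ on $T$. If $H$ is elliptic it is conjugate into $A(\gam\setminus v)$, where induction applies since $(\gam\setminus v)^e\hookrightarrow\gam^e$ by Proposition~\ref{prop:gex-basic}, or into $\bZ\times A(\lk(v))$, where projecting off the central $\bZ$ again reduces to a graph with fewer vertices. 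The essential case is when $H$ has no global fixed point on $T$: then $H$ inherits a graph--of--groups decomposition with vertex groups $H\cap A(\gam\setminus v)^g$, $H\cap A(\st(v))^g$ and edge groups $H\cap A(\lk(v))^g$, which one argues (using that $H$ is a right--angled Artin group) can be upgraded to a decomposition whose pieces are right--angled Artin subgroups of conjugates of $A(\gam\setminus v)$ and $A(\st(v))$; applying the inductive hypothesis to the proper subgraphs $\gam\setminus v$ and $\st(v)$, each piece then embeds fully into $\gam^e$.

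The remaining task — conjugating these partial embeddings by suitable elements of $A(\gam)$ so that they agree along the edge groups while introducing no new commutations, so that the assembled subgraph of $\gam^e$ is a full copy of $\Lambda$ — is where the real difficulty lies; controlling the supports of the conjugating elements along $T$ (together with establishing that the pieces are genuinely right--angled Artin groups in the first place) is the main obstacle I expect. An alternative that sidesteps the Bass--Serre analysis is to run the entire argument inside the mapping class group $\Mod(\Sigma)$ and the curve graph $\CC(\Sigma)$, using the embeddings $A(\gam)\hookrightarrow\Mod(\Sigma)$ and $\gam^e\hookrightarrow\CC(\Sigma)$ from the proof of Proposition~\ref{prop:gex-basic} and replacing the combinatorics of $T$ with subsurface projection estimates, as in~\cite{KK2013}.
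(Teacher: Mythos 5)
Your route is genuinely different from the paper's: you propose an induction on $|V(\gam)|$ using Bass--Serre theory over the amalgam $A(\gam)=A(\gam\setminus v)*_{A(\lk(v))}A(\st(v))$, whereas the paper obtains Theorem~\ref{thm:gex2} as a corollary of the stronger Theorem~\ref{thm:gex3} (which concludes $\Lambda\hookrightarrow(\gam^e)_k$ for arbitrary $\gam$), proved by realizing $A(\gam)$ in a mapping class group, building ``partial'' pseudo-Anosov elements in the image of $A(\Lambda)$, and reading off an embedding into $\gam^e$ via the curve graph. In the triangle-free case, $\gam^e$ is itself triangle-free, so one further observation lets the clique-graph conclusion of Theorem~\ref{thm:gex3} collapse to an embedding into $\gam^e$ itself. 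Your final sentence gestures at this as an alternative, but it is in fact the paper's actual argument, not a sidestep.

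There is a genuine gap in your inductive step, and to your credit you flag it. In the essential case where $H=A(\Lambda)$ acts on the Bass--Serre tree without a global fixed point, the induced graph-of-groups decomposition has vertex groups $H\cap A(\gam\setminus v)^g$, $H\cap A(\st(v))^g$ and these need not be right-angled Artin groups at all; subgroups of RAAGs are notoriously wild (Bestvina--Brady, etc.), and nothing in your setup forces these intersections to be right-angled Artin subgroups, let alone ones supported on subgraphs of $\Lambda$ that partition $V(\Lambda)$ coherently. Even granting this, the assembly step --- conjugating the partial embeddings of the pieces so they agree on edge groups \emph{and} so that no spurious commutations appear between vertices in different pieces (fullness of the embedded copy of $\Lambda$, which is what $\gam^e$-subgraph means here) --- is exactly the hard content of the theorem, and you offer no mechanism for controlling it. The mapping-class-group proof in \cite{KK2013} is designed precisely to circumvent these two difficulties by producing curves in $\CC(\Sigma)$ directly from elements of $A(\Lambda)$, with disjointness and intersection read off from commutation, so what you present as an ``alternative'' is really the only completed route here.
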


Theorem~\ref{thm:gex2} is a corollary of a more general result, which is the most general converse to Theorem~\ref{thm:gex1} that is
known to the author.

\begin{thm}[See~\cite{KK2013}]\label{thm:gex3}
Suppose that $A(\Lambda)<A(\gam)$. Then $\Lambda$ is a subgraph of the clique graph $(\gam^e)_k$.
\end{thm}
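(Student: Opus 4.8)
Let me think about this. We have $A(\Lambda) < A(\gamma)$, and we want to show $\Lambda$ embeds in the clique graph $(\gamma^e)_k$.

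The strategy: given an injection $A(\Lambda) \to A(\gamma)$, each generator $v$ of $A(\Lambda)$ maps to some element $g_v \in A(\gamma)$. We need to "read off" a subgraph of $(\gamma^e)_k$ from these. The vertices of $(\gamma^e)_k$ are cliques in $\gamma^e$, i.e., collections of pairwise-commuting conjugates of vertex generators—equivalently (by the clique/join structure of RAAGs), conjugates of "clique subgroups" $\mathbb{Z}^j \cong A(K)$ for $K$ a clique in $\gamma$.

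Key idea: any element $g \in A(\gamma)$ has a well-defined notion of support after cyclic reduction, and if $g$ is cyclically reduced with connected support that is NOT a join, then (by Theorem~\ref{thm:cent}, or the root-closedness / centralizer results) $g$ has a cyclic-ish centralizer and "points in a definite direction." More useful: a general element can be conjugated so that it lies in a "parabolic" $A(\gamma_S)$ for $S = \operatorname{supp}$, and if $S$ decomposes as a join $S_1 * \cdots * S_\ell$ with each $S_i$ not a join, then $g$ factors through the direct product, one loxodromic-type factor per $S_i$.

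Here's the plan, step by step.

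\begin{enumerate}
\item \textbf{Reduce to the triangle-free case philosophically, then lift.} Recall Theorem~\ref{thm:gex2} handles the triangle-free case: there $A(\Lambda) < A(\gamma)$ forces $\Lambda \subset \gamma^e$. The point of $(\gamma^e)_k$ is exactly to absorb the cliques. So I want to show: each vertex $v$ of $\Lambda$, sent to $g_v \in A(\gamma)$, can be associated to a clique $K_v$ in $\gamma^e$ (a vertex of $(\gamma^e)_k$), and that adjacency/non-adjacency in $\Lambda$ is respected by the $(\gamma^e)_k$ edge relation.

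\item \textbf{Normalize the subgroup.} By the standard structure theory (the Bass–Serre / normal-form analysis used in~\cite{KK2013}), after conjugating $A(\Lambda)$ inside $A(\gamma)$, one can assume the image lies in a parabolic subgroup $A(\gamma_S)$ with $S$ "irreducible enough," and that each generator $g_v$ is cyclically reduced. The crucial input is the description of commutation: $[g_v, g_w] = 1$ in $A(\gamma)$ iff, after passing to the join decomposition of the relevant supports, $g_v$ and $g_w$ agree on the "non-join-factor" coordinates up to roots and differ only in a common $\mathbb{Z}$-clique direction. This is where Theorem~\ref{thm:cent} does the heavy lifting: centralizers of cyclically reduced elements sit inside $\langle \operatorname{supp} \cup \operatorname{lk}(\operatorname{supp}) \rangle$.

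\item \textbf{Extract the clique vertices.} For each $v \in V(\Lambda)$, write (after the normalization) $g_v$ in terms of its "pure" loxodromic part $\ell_v$ supported on a non-join $T_v \subset \gamma$ (a conjugate of which is a vertex of $\gamma^e$ when $T_v$ is a single vertex, and more generally after raising to a power we get honest vertices of $\gamma^e$ by Theorem~\ref{thm:gex1}/\cite{Koberda2012}), together with a possible abelian correction. The upshot is a clique $K_v \subset \gamma^e$—possibly a single vertex, possibly larger when $g_v$ itself is "reducible". Define the map $V(\Lambda) \to V((\gamma^e)_k)$ by $v \mapsto v_{K_v}$.

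\item \textbf{Check the edge relation.} If $\{v,w\} \in E(\Lambda)$ then $[g_v,g_w]=1$, and by Step 2 the supports $K_v, K_w$ (as subsets of vertices of $\gamma^e$) together span a clique in $\gamma^e$, so $\{v_{K_v}, v_{K_w}\} \in E((\gamma^e)_k)$. Conversely, if $\{v,w\} \notin E(\Lambda)$, then $g_v$ and $g_w$ generate a copy of $F_2$ (since the injection restricted to $\langle v,w\rangle \cong F_2$ is faithful), so they cannot commute, and in particular $K_v \cup K_w$ is not a clique in $\gamma^e$—else conjugates of the generators would commute. Hence $v \mapsto v_{K_v}$ is an injective graph homomorphism $\Lambda \to (\gamma^e)_k$. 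This needs the "fullness" convention for subgraphs, so one must also rule out extra edges, which follows from the $F_2$ argument.
\end{enumerate}

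\textbf{Main obstacle.} The genuinely hard step is Step 2–3: extracting, from an arbitrary element $g_v \in A(\gamma)$ lying in a faithful copy of $A(\Lambda)$, a canonical clique in $\gamma^e$ in a way that is simultaneously (i) well-defined, (ii) compatible with commutation, and (iii) compatible with non-commutation. The subtlety is that $g_v$ need not be a conjugate of a vertex generator (that is precisely why $(\gamma^e)_k$ rather than $\gamma^e$ appears), and its "direction" in $\gamma^e$ is only visible after cyclic reduction, passing to the join decomposition of its support, and discarding finite-order / abelian ambiguity. Controlling how these normalizations interact across \emph{all} generators of $A(\Lambda)$ at once—rather than one at a time—is what makes the argument delicate; this is exactly the bookkeeping carried out in~\cite{KK2013}, and the honest proof essentially reduces Theorem~\ref{thm:gex3} to Theorem~\ref{thm:gex2} by passing to the graph whose vertices are maximal-clique directions.
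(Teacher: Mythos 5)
Your proposal is an outline of a plausible-looking strategy rather than a proof, and the genuine content of the theorem is located precisely in the steps you flag as the ``main obstacle'' and then do not carry out. Specifically: (i) you cannot in general conjugate a subgroup $A(\Lambda)$ so that \emph{all} of its designated generators become simultaneously cyclically reduced, so the normalization in Step 2 is not available; (ii) the assignment $g_v \mapsto K_v$ of a clique in $\gam^e$ to each generator is never actually constructed --- you observe that $g_v$ factors according to a join decomposition of its support, but a non-join subgraph of $\gam$ is not a clique, and a loxodromic factor supported on a non-join piece is not a conjugate of a vertex generator, so there is no vertex of $\gam^e$ naturally attached to it; (iii) the edge checks in Step 4 presuppose the assignment from (ii) and in the forward direction rely on $[g_v,g_w]=1$ forcing $K_v\cup K_w$ to span a clique in $\gam^e$, which is exactly the kind of statement Theorem~\ref{thm:cent} does \emph{not} deliver, since it only localizes centralizers inside $\langle \supp\cup\lk(\supp)\rangle$ rather than inside a clique.

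Beyond the gaps, the route you sketch is not the one the paper indicates. The actual proof in~\cite{KK2013} passes through a faithful representation $A(\gam)\hookrightarrow\Mod(\Sigma)$ by powers of Dehn twists, builds ``partial'' pseudo-Anosov mapping classes out of the images of the generators of $A(\Lambda)$, uses their canonical reduction systems to produce a finite graph $X$ embedded in $\gam^e$ (preserving both adjacency and non-adjacency), and then observes that $\Lambda\subset X_k\subset(\gam^e)_k$. The curve-graph machinery is what gives each $g_v$ a well-defined ``support'' in $\gam^e$ (via subsurfaces/reduction systems), which is precisely what your purely combinatorial normalization fails to supply. Your closing remark --- that the proof ``essentially reduces to Theorem~\ref{thm:gex2} by passing to the graph whose vertices are maximal-clique directions'' --- is not an accurate description of what the paper does; the intermediate graph $X$ is larger than $\Lambda$ and is produced geometrically, not by a reduction to the triangle-free case.
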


The basic idea behind Theorem~\ref{thm:gex3} is again to use mapping class groups, though it is significantly more complicated than
Theorem~\ref{thm:gex1} and Proposition~\ref{prop:gex-basic}. One builds certain ``partial" pseudo-Anosov mapping classes in
the image of $A(\Lambda)$ and builds an embedding of a larger graph $X$ into $\gam^e$, which contains $\Lambda$ in its clique
graph. Incidentally, Theorem~\ref{thm:gex3} has a natural analogue for mapping class groups: if a right-angled Artin group $A(\gam)$
embeds in a mapping class group $\Mod(\Sigma)$, then $\gam$ embeds as a subgraph of $\CC(\Sigma)_k$,
the clique graph of the curve graph (\cite{KK2014IMRN}, cf.~\cite{KKOsaka}). We will avoid giving further details here.

Theorem~\ref{thm:gex3} admits several other corollaries that can serve as converses to Theorem~\ref{thm:gex1}, and also
allows one to prove many results that relate the combinatorics of $\gam$ to the structure of $A(\gam)$. Given the conclusion of
Theorem~\ref{thm:gex3}, we leave the following result (originally due to Kambites~\cite{Kambites2009}, who offered a combinatorial
argument that is very different in flavor from the ideas expounded here) as an exercise for the reader:

\begin{prop}\label{prop:square}
Let $\gam$ be a finite graph. Then $\gam$ contains a square if and only if $F_2\times F_2<A(\gam)$.
\end{prop}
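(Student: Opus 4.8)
The plan is to treat the two directions separately; the backward implication is elementary, while the forward one goes through Theorem~\ref{thm:gex3}.

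For the backward direction, suppose $\gam$ contains a square on vertices $a,b,c,d$ with $\{a,b\},\{b,c\},\{c,d\},\{d,a\}$ its only edges. Each of $a,c$ commutes with each of $b,d$, so the subgroups $\langle a,c\rangle$ and $\langle b,d\rangle$ of $A(\gam)$ commute elementwise. By Proposition~\ref{prop:retraction} there is a retraction $A(\gam)\to A(\{a,c\})$, whose target is free of rank two since $a\not\sim c$; restricted to $\langle a,c\rangle$ this retraction is a surjection onto $F_2$ from a two-generated group, hence an isomorphism, so $\langle a,c\rangle\cong F_2$, and likewise $\langle b,d\rangle\cong F_2$. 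The analogous retraction onto $A(\{b,d\})$ also forces $\langle a,c\rangle\cap\langle b,d\rangle=1$. Hence $\langle a,b,c,d\rangle=\langle a,c\rangle\times\langle b,d\rangle\cong F_2\times F_2\le A(\gam)$.

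For the forward direction, regard the square as the join of its two non-adjacent pairs, so that $F_2\times F_2\cong A(C_4)$ with $C_4$ the square. Assuming $A(C_4)<A(\gam)$, Theorem~\ref{thm:gex3} gives $C_4$ as a full subgraph of the clique graph $(\gam^e)_k$, and I would then descend to $\gam$ in two steps. The first is a combinatorial lemma valid for any graph $\Delta$: if $\Delta_k$ contains an induced four-cycle, so does $\Delta$. Indeed, if cliques $K_1,K_2,K_3,K_4$ of $\Delta$ realize the four-cycle in $\Delta_k$, then $K_1\cup K_3$ and $K_2\cup K_4$ failing to be cliques produce vertices $u\in K_1$, $w\in K_3$ with $u\not\sim w$ and $s\in K_2$, $t\in K_4$ with $s\not\sim t$; using that $K_1\cup K_2$, $K_2\cup K_3$, $K_3\cup K_4$, $K_4\cup K_1$ are cliques, a short check (each of $u\ne s$, $s\ne w$, $w\ne t$, $t\ne u$, $u\ne w$, $s\ne t$, and then that consecutive ones are adjacent) shows $u,s,w,t$ span an induced four-cycle in $\Delta$. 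Taking $\Delta=\gam^e$ produces an induced square $x_1x_2x_3x_4$ in $\gam^e$.

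The remaining, and harder, step is to show that an induced square in $\gam^e$ forces one in $\gam$. Write $x_i=v_i^{g_i}$ with $v_i\in V(\gam)$. I would use three structural facts about $\gam^e$: (i) the forgetful map $v^g\mapsto v$ is injective along every edge of $\gam^e$, since two distinct conjugates of a single vertex never commute — apply the retraction onto $A(\st(v))$, in which $v$ is central; (ii) the link in $\gam^e$ of a vertex $v^g$ is, as a full subgraph, isomorphic to $(\lk(v))^e$ — again via $A(\st(v))$ and item~(4) of Proposition~\ref{prop:gex-basic}; and (iii) for each $g$ the conjugate copy $\{v^g:v\in V(\gam)\}$ is a full subgraph of $\gam^e$ isomorphic to $\gam$. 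By (i), the vertices $v_1,v_2,v_3,v_4$ form a closed four-walk in $\gam$ with consecutive entries distinct; by (ii), the non-adjacencies $x_1\not\sim x_3$ and $x_2\not\sim x_4$ localize inside links and force the relevant links of $\gam$ to be incomplete; and by (iii), if the square lies inside a single conjugate copy of $\gam$ we are finished. The obstacle is precisely the remaining configurations: the four-walk may have a chord ($v_1\sim v_3$ or $v_2\sim v_4$) or repeat a vertex ($v_1=v_3$ or $v_2=v_4$), because conjugates of commuting generators need not commute. I expect to eliminate these by induction on $|V(\gam)|$: the intersection of the centralizers of $x_2$ and $x_4$ is a proper parabolic subgroup of $A(\gam)$ (intersections of parabolics being parabolic), inside which $x_1,x_3$ become non-adjacent vertices of the extension graph of a strictly smaller graph, and iterating should force the square into a conjugate copy of $\gam$ as in (iii). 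Setting up this induction so that it genuinely terminates — equivalently, disposing of the chord and coincidence cases — is the main technical difficulty, and is the part the survey leaves to the reader.
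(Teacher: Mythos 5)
Your backward direction is complete, and the first two steps of the forward direction are also sound: the passage from $F_2\times F_2\cong A(C_4)$ to $C_4\subset(\gam^e)_k$ via Theorem~\ref{thm:gex3}, and the $2\times 2$ minor check that an induced four-cycle in a clique graph $\Delta_k$ yields one in $\Delta$, are both correct. The genuine gap is the step you flag at the end: you never prove that an induced square in $\gam^e$ forces one in $\gam$, and the induction you sketch cannot close it. Passing to $C(x_2)\cap C(x_4)$ does not shrink the configuration, because $x_2\notin C(x_4)$ --- that is exactly the non-adjacency $x_2\not\sim x_4$ --- so neither $x_2$ nor $x_4$ survives in that parabolic, and there is no square left to descend.

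The tool that closes the gap, available from~\cite{KK2013} and echoing the doubling construction the survey later uses to produce commensurable right-angled Artin groups, is the description of $\gam^e$ as an increasing union of graphs obtained from $\gam$ by iterated doubling along vertex stars, $D_w(\Delta)=\Delta_1\cup_{\st(w)}\Delta_2$: every finite subgraph of $\gam^e$ embeds in some finite such double. It therefore suffices to show that doubling preserves the absence of induced squares. If $\{a,b,c,d\}$ (edges $ab,bc,cd,da$) is an induced four-cycle of $D_w(\Delta)$ not contained in a single copy $\Delta_i$, then after relabeling one may take $a\in\Delta_1\setminus\st(w)$ and $c\in\Delta_2\setminus\st(w)$, since adjacent vertices from opposite copies must lie in $\st(w)$; their common neighbours $b,d$ are then forced into $\st(w)$, and in fact into $\lk(w)$ because $a\notin\st(w)$. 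Writing $c'$ for the mirror of $c$ in $\Delta_1\setminus\st(w)$, the quadruple $\{w,b,c',d\}$ is an induced square in $\Delta_1\cong\Delta$: the edges $wb,bc',c'd,dw$, the non-edges $wc',bd$, and the distinctness of the four vertices are all immediate. Inserting this lemma in place of your unfinished induction completes the proof.
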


Here, by \emph{square}\index{square graph} we mean a graph with four vertices and a cyclic adjacency relation.

\begin{figure}[h!]
  \tikzstyle {bv}=[black,draw,shape=circle,fill=black,inner sep=1pt]
\begin{tikzpicture}[main/.style = {draw, circle}]
\node[main] (1) {$a$};
\node[main] (2) [right of=1] {$b$};
\node[main] (3) [below of=1] {$c$}; 
\node[main] (4) [below of=2] {$d$};

\draw (1)--(2);
\draw (3)--(4);
\draw (2)--(4);
\draw (1)--(3);
\end{tikzpicture}%
\caption{The square.}
\label{f:square}
\end{figure}
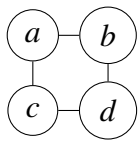

\subsection{A characterization of cographs via right-angled Artin groups and the geometry of the extension graph}

An important class of graphs that occurs naturally in graph theory is the class of \emph{cographs}\index{cograph}, or $P_4$-free graphs
(see~\cite{cograph1,cograph2,cograph3} for some early references introducing cographs).
These are simply the graphs that do not have the path $P_4$ of length three as a subgraph.

\begin{figure}[h!]
  \tikzstyle {bv}=[black,draw,shape=circle,fill=black,inner sep=1pt]
\begin{tikzpicture}[main/.style = {draw, circle}]
\node[main] (1) {$a$};
\node[main] (2) [right of=1] {$b$};
\node[main] (3) [right of=2] {$c$}; 
\node[main] (4) [right of=3] {$d$};

\draw (1)--(2)--(3)--(4);
\end{tikzpicture}%
\caption{The graph $P_4$.}
\label{f:p4}
\end{figure}
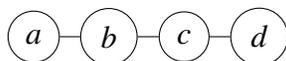

Right-angled Artin groups on cographs can be characterized algebraically, and right-angled Artin groups provide a perspective on cographs
that insight into one of their most fundamental properties, i.e.~recursive definition.

\begin{thm}[\cite{KK2013}]\label{thm:cograph}
Let $\gam$ be a finite connected graph.
The following are equivalent:
\begin{enumerate}
\item
The graph $\gam$ has no (full) subgraph isomorphic to $P_4$.
\item
The graph $\gam^e$ has no (full) subgraph isomorphic to $P_4$.
\item
The graph $\gam$ is either a single vertex or splits as a nontrivial join.
\end{enumerate}
\end{thm}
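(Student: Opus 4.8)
The plan is to establish the cycle of implications $(3)\Rightarrow(1)\Rightarrow(2)\Rightarrow(3)$, or equivalently $(3)\Rightarrow(2)\Rightarrow(1)\Rightarrow(3)$, exploiting the recursive structure of cographs together with Proposition~\ref{prop:gex-basic}. The implication $(2)\Rightarrow(1)$ is immediate: $\gam$ is a subgraph of $\gam^e$ (take the conjugating elements to be trivial, or invoke item (4) of Proposition~\ref{prop:gex-basic} with $\Lambda=\gam$), so if $\gam^e$ contains no $P_4$ then neither does $\gam$. The implication $(3)\Rightarrow(1)$ is the classical recursive characterization of cographs: one argues by induction on $|V(\gam)|$ that a connected graph that is a single vertex or a nontrivial join $\gam=\gam_1*\gam_2$ has no induced $P_4$. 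Indeed, any four vertices of a join distribute among $\gam_1$ and $\gam_2$; if all four lie in one factor we are done by induction (the factor need not be connected, but a disjoint union of cographs is a cograph, and an induced $P_4$ would have to lie inside one connected component), and if they split across the factors then the cross edges are all present, which forces the induced subgraph to contain a triangle or a $4$-cycle, hence not to be $P_4$.

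For the remaining implication I would prove $(1)\Rightarrow(3)$, which is again the standard fact that a connected $P_4$-free graph with at least two vertices is a nontrivial join; equivalently, the complement of a disconnected cograph is connected, or: a graph and its complement cannot both be connected cographs on $\geq 2$ vertices unless... — more cleanly, one shows by induction that a cograph is built from single vertices by disjoint union and join, and a \emph{connected} cograph on $\geq 2$ vertices must arise at the top level from a join (if the last operation were a disjoint union it would be disconnected). The base case is a single vertex; for the inductive step, given a connected cograph $\gam$ on $\geq 2$ vertices, its complement $\overline{\gam}$ is disconnected (this is the content of the Seinsche/cograph theorem and can be proved directly by a short argument: if both $\gam$ and $\overline{\gam}$ were connected on $\geq 4$ vertices one extracts an induced $P_4$), hence $\gam$ is a nontrivial join.

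Then the only genuinely new content is closing the loop with the extension graph, namely $(3)\Rightarrow(2)$. Here I would argue by induction on $|V(\gam)|$ using the recursive structure supplied by $(1)\Leftrightarrow(3)$. If $\gam$ is a single vertex, $\gam^e$ is a single vertex and trivially $P_4$-free. If $\gam=\gam_1*\gam_2$ is a nontrivial join, then $A(\gam)=A(\gam_1)\times A(\gam_2)$ by Theorem~\ref{thm:join}, and I claim $\gam^e$ is itself a join of $\gam_1^e$ and $\gam_2^e$: every conjugate $v^g$ of a vertex $v\in V(\gam_1)$ by $g=(g_1,g_2)\in A(\gam_1)\times A(\gam_2)$ equals $v^{g_1}$, which commutes with every conjugate $w^{h}=w^{h_2}$ of a vertex $w\in V(\gam_2)$, because the $A(\gam_1)$ and $A(\gam_2)$ factors centralize each other. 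Thus $V(\gam^e)=V(\gam_1^e)\sqcup V(\gam_2^e)$ with all cross edges present, i.e.~$\gam^e=\gam_1^e * \gam_2^e$. By induction each $\gam_i^e$ is $P_4$-free (each $\gam_i$ is again a connected cograph, after decomposing into connected components — here one must be slightly careful since $\gam_i$ need not be connected, but a disjoint union has extension graph the disjoint union of the extension graphs of its components, and a disjoint union of $P_4$-free graphs is $P_4$-free), and a join of two $P_4$-free graphs is $P_4$-free by exactly the argument used in $(3)\Rightarrow(1)$.

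The main obstacle, and the step deserving the most care, is the bookkeeping around connectivity in $(3)\Rightarrow(2)$: the recursive description of cographs alternates between ``disjoint union'' and ``join'', so the inductive hypothesis as stated (for connected graphs) has to be threaded through disconnected intermediate graphs. The clean fix is to prove the slightly more general statement ``$\gam$ is a cograph (not necessarily connected) if and only if $\gam^e$ is a cograph'' by simultaneous induction, using the two closure operations: $(\gam_1\sqcup\gam_2)^e=\gam_1^e\sqcup\gam_2^e$ and $(\gam_1*\gam_2)^e=\gam_1^e*\gam_2^e$, together with the fact that the class of $P_4$-free graphs is closed under disjoint union and join. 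One small point worth verifying explicitly is that a \emph{connected} infinite graph which is a join of two $P_4$-free graphs is still $P_4$-free — but this is the same four-vertex case analysis as in the finite case and poses no difficulty, since being $P_4$-free is a condition on four-element vertex subsets and is insensitive to the (in)finiteness of the ambient graph.
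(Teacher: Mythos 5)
The paper gives no proof here; it explicitly leaves the result as an exercise, citing Proposition~\ref{prop:gex-basic} and ``elementary combinatorial group theory,'' and your strategy of chasing the cycle $(3)\Rightarrow(2)\Rightarrow(1)\Rightarrow(3)$, with the only real content being the join identity $(\gam_1*\gam_2)^e=\gam_1^e*\gam_2^e$, is exactly the argument the paper is pointing at. Your verification of that join identity is correct (you should also record that the edges \emph{within} $V(\gam_1^e)$ computed in $\gam^e$ agree with those computed in $\gam_1^e$ inside $A(\gam_1)$; this is immediate since $A(\gam_1)$ is a direct factor, but it is a needed step).

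One formula in your write-up is wrong, though it happens not to damage the conclusion. It is \emph{not} the case that $(\gam_1\sqcup\gam_2)^e=\gam_1^e\sqcup\gam_2^e$. Here $A(\gam_1\sqcup\gam_2)=A(\gam_1)*A(\gam_2)$ is a free product, and the conjugates of $V(\gam_1)$ fall into the various subgroups $g^{-1}A(\gam_1)g$, indexed by the right cosets $A(\gam_1)\backslash\bigl(A(\gam_1)*A(\gam_2)\bigr)$ — an infinite set whenever $\gam_2\neq\emptyset$. Each coset contributes its own copy of $\gam_1^e$, and these copies are disjoint and pairwise non-adjacent because in a free product the centralizer of a nontrivial element of a conjugate of a free factor lies inside that conjugate, so $v^g$ and $w^h$ can commute only when they sit in the same conjugate of the same factor. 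The correct statement is therefore that $(\gam_1\sqcup\gam_2)^e$ is a disjoint union of \emph{infinitely many} copies of $\gam_1^e$ and of $\gam_2^e$. This is still $P_4$-free when each $\gam_i^e$ is, so your induction closes, but the free-product centralizer fact is precisely the ``elementary combinatorial group theory'' the argument rests on; it justifies both the disjointness and, crucially for your $P_4$ step, the absence of edges between the different copies, and it should be made explicit rather than absorbed into an incorrect identity.
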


\begin{cor}\label{cor:cograph}
The graph $\gam$ is a cograph if and only if $A(\gam)$ does not contain a copy of $A(P_4)$.
\end{cor}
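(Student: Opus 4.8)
The plan is to deduce the corollary from the structure theorem for cographs (Theorem~\ref{thm:cograph}) together with the behaviour of $A(\gam)$ under disconnections and joins. One implication is immediate: if $\gam$ is not a cograph it contains $P_4$ as a full subgraph, and the section of the retraction in Proposition~\ref{prop:retraction} exhibits $A(P_4)$ as a subgroup of $A(\gam)$. So the real content is the converse, that a cograph $\gam$ gives a group $A(\gam)$ containing no copy of $A(P_4)$, and I would prove this by passing to a counterexample with $|V(\gam)|$ minimal.

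So suppose $\gam$ is a cograph, $A(P_4)<A(\gam)$, and $|V(\gam)|$ is as small as possible. If $\gam$ is a single vertex then $A(\gam)\cong\bZ$ contains no copy of $A(P_4)$, so $|V(\gam)|\geq 2$; hence either $\gam$ is disconnected, so $\gam=\gam_1\sqcup\gam_2$ with $\gam_1,\gam_2$ nonempty cographs, or $\gam$ is connected, so $\gam=\gam_1\ast\gam_2$ is a nontrivial join of nonempty cographs by Theorem~\ref{thm:cograph}. In the disjoint-union case $A(\gam)\cong A(\gam_1)\ast A(\gam_2)$ (Theorem~\ref{thm:free-prod}); since $P_4$ is connected, $A(P_4)$ is freely indecomposable by Theorem~\ref{thm:free-prod}, and it is visibly not infinite cyclic, so the Kurosh subgroup theorem forces any copy of $A(P_4)$ in the free product to be conjugate into $A(\gam_1)$ or $A(\gam_2)$, contradicting minimality. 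Thus $\gam=\gam_1\ast\gam_2$ is a nontrivial join, $A(\gam)\cong A(\gam_1)\times A(\gam_2)$ by Theorem~\ref{thm:join}, and by minimality neither factor contains a copy of $A(P_4)$.

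Let $K_1,K_2$ be the kernels of the two composites $A(P_4)\hookrightarrow A(\gam_1)\times A(\gam_2)\to A(\gam_i)$; these are normal subgroups of $A(P_4)$ with $K_1\cap K_2=1$ and $[K_1,K_2]=1$. If $K_1=1$ then the projection to $A(\gam_1)$ embeds $A(P_4)$ into $A(\gam_1)$, a contradiction, and similarly $K_2\neq1$; as right-angled Artin groups are torsion-free, $K_1$ and $K_2$ are both infinite. To finish I would invoke acylindrical hyperbolicity. The graph $P_4$ is connected, has more than one vertex, and is not a join (its complement, a path, is connected), so Proposition~\ref{prop:gex-basic}(2) gives that $P_4^e$ is connected of infinite diameter and Theorem~\ref{thm:acyl-intro} shows $A(P_4)$ acts on it acylindrically and non-elementarily (there are loxodromic elements, e.g.\ any cyclically reduced word of full support, which is not conjugate into a join subgroup). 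Hence $A(P_4)$ is acylindrically hyperbolic, so the centralizer of any infinite normal subgroup is finite; but $1\neq K_2\leq C_{A(P_4)}(K_1)$ with $K_1$ an infinite normal subgroup. This contradiction closes the induction.

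The one genuinely nontrivial step is this last one — that $A(P_4)$ has no two nontrivial commuting normal subgroups — and everything else is bookkeeping. It can also be done within the tools of this paper, bypassing acylindrical hyperbolicity, by appealing to Theorem~\ref{thm:cent}: conjugating so that some $1\neq y\in K_2$ is cyclically reduced, Theorem~\ref{thm:cent} places $C_{A(P_4)}(y)$ in the parabolic $A(S)$ with $S=\supp(y)\cup\lk(\supp(y))$, whence the normal core of $C_{A(P_4)}(y)$, which contains $K_1$, lies in the normal core of $A(S)$; pushing a nontrivial normal subgroup $N\leq A(S)$ of $A(P_4)$ around by the vertices outside $S$ and comparing supports shows $\supp(N)\subseteq S\cap\lk(V(P_4)\setminus S)$ when $S\subsetneq V(P_4)$, while $S=V(P_4)$ makes $C_{A(P_4)}(y)$ cyclic by Theorem~\ref{thm:cent}. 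Running through the finitely many $S$ arising in $P_4$, and using that $A(P_4)$ has no nontrivial infinite cyclic normal subgroup — itself a consequence of Theorems~\ref{thm:cent} and~\ref{thm:coho-dim}, as such a subgroup $\langle z\rangle$ would have a finite-index centralizer in $A(P_4)$, forcing $\supp(z)=V(P_4)$ and hence a cyclic centralizer, which contradicts that finite-index subgroups of $A(P_4)$ have cohomological dimension $2$ — forces $K_1=1$, the contradiction sought.
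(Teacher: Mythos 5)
Your proof is correct. The paper does not actually write out a proof of this corollary; it simply remarks that Theorem~\ref{thm:cograph} and Corollary~\ref{cor:cograph} ``follow from Proposition~\ref{prop:gex-basic} and some elementary combinatorial group theory considerations.'' Your induction on $|V(\gam)|$ along the recursive structure of cographs (disjoint union handled by Kurosh, join handled by an analysis of the two projection kernels $K_1,K_2$) is exactly the kind of argument that remark points at, and the second variant you give for the crucial step---that $A(P_4)$ has no pair of nontrivial commuting normal subgroups with trivial intersection---via Theorem~\ref{thm:cent} together with Theorem~\ref{thm:coho-dim} stays entirely within the elementary toolbox the paper cites. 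The first variant via acylindrical hyperbolicity is heavier machinery than the paper seems to intend (and leans on facts about acylindrically hyperbolic groups that the survey never states), but it is valid and arguably clarifies why the key step is true. Two very minor things worth tightening if you write this up: in the \textsc{cent}-based argument, the implication ``$\supp(z)\cup\lk(\supp(z))=V(P_4)$ forces $\supp(z)=V(P_4)$'' is a small case check specific to $P_4$ rather than a general fact, so it should be flagged as such; and you should note explicitly that proper standard subgroups $A(S)\subsetneq A(P_4)$ have infinite index (immediate, since $v\notin S$ gives infinitely many distinct cosets $v^nA(S)$), which is what lets you rule out $S\subsetneq V(P_4)$ when $C(z)$ has finite index.
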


In particular, Theorem~\ref{thm:cograph} shows that a right-angled Artin group cannot contain ``hidden" copies of $A(P_4)$. If $A(\gam)$
contains $A(P_4)$ then one can decide simply from looking at the graph $\gam$. This is in contrast to other classes of graphs. For instance,
$A(P_4)$ contains a copy of $A(P_5)$, where $P_5$ denotes the path of length four. Thus, there can be hidden copies of $A(P_5)$.
For a more striking example, one may consider $X_6$, the complement graph of the hexagon, also known as the triangular prism.
This graph contains no cycle $C_5$ of length $5$, though Kim proved that $A(C_5)<A(X_6)$~\cite{Kim2008}; also, $C_5$ is a subgraph of
the extension graph $X_6^e$, and so Kim's result follows from Theorem~\ref{thm:gex1}.

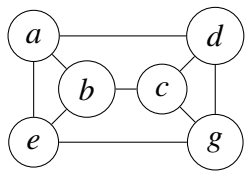
\begin{figure}[h!]
  \tikzstyle {bv}=[black,draw,shape=circle,fill=black,inner sep=1pt]
\begin{tikzpicture}[main/.style = {draw, circle}]
\node[main] (1) {$a$};
\node[main] (2) [below right of=1] {$b$};
\node[main] (3) [right of=2] {$c$}; 
\node[main] (4) [above right of=3] {$d$};
\node[main] (5) [below left of=2] {$e$};
\node[main] (6) [below right of=3] {$g$};

\draw (1)--(2);
\draw (5)--(2);
\draw (1)--(5);
\draw (2)--(3);
\draw (3)--(4);
\draw (3)--(6);
\draw (1)--(4);
\draw (5)--(6);
\draw (6)--(4);
\end{tikzpicture}%
\caption{The graph $X_6$.}
\label{f:x6}
\end{figure}

We leave the proofs of Theorem~\ref{thm:cograph} and Corollary~\ref{cor:cograph}
as an exercise for the reader, as they follow from Proposition~\ref{prop:gex-basic}
and some elementary combinatorial group theory considerations.

Let $\gam$ be a connected graph such that $\gam^e$ has finite diameter. By Theorem~\ref{thm:cograph} (or even just by
Proposition~\ref{prop:gex-basic}), the graph $\gam$ splits as a nontrivial join. If $\gam$ is a cograph and $\Lambda$ is a join
factor of $\gam$, then $\Lambda$ must also be a cograph and hence $\Lambda^e$ also has finite diameter, whence it follows
that $\Lambda$ must also split as a nontrivial join.

Let $\KK_0$ denote a singleton vertex. For $i>0$, we set $\KK_{2i-1}$ to be the collection of all finite graphs obtained as
(possibly trivial) joins
of elements of $\KK_{2i-2}$. We set $\KK_{2i}$ to be the collection of all finite graphs obtained as disjoint unions of elements of
$\KK_{2i-1}$. Clearly for $i\leq j$ we have $\KK_i\subset\KK_j$, and we set \[\KK=\bigcup_{i\geq 0}\KK_i.\]
Clearly, if $\gam\in\KK$ then $\gam$ is a cograph. Conversely, the preceding remarks
and an easy induction on $|V(\gam)|$ show that if $\gam$ is a cograph then $\gam\in\KK$. This coincides with the recursive description
of cographs.

Since $\KK$ is built up recursively, we can give the following characterization of $A(\gam)$ for $\gam\in\KK$, which results immediately from
the preceding discussion:

\begin{cor}
We have $\gam\in\KK$ if and only if $A(\gam)$ is an element of the smallest class of groups that:
\begin{enumerate}
\item
Contains $\bZ$;
\item
Is closed under finite direct products;
\item
Is closed under finite free products.
\end{enumerate}
\end{cor}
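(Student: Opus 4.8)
The plan is to prove the two implications separately, using the correspondences ``join $\leftrightarrow$ direct product'' and ``disjoint union $\leftrightarrow$ free product'' read off from the defining presentation of a right-angled Artin group, together with the fact (Theorem~\ref{thm:coho-alg}) that $A(\gam)$ determines $\gam$ up to isomorphism. Throughout, write $\GG$ for the class of groups described in the statement of the corollary.

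For the forward direction I would induct on the least $n$ with $\gam\in\KK_n$. When $n=0$ the graph $\gam$ is a single vertex and $A(\gam)\cong\bZ\in\GG$. When $n=2i-1$ is odd, $\gam$ is a (possibly trivial) join $\gam_1*\cdots*\gam_k$ with each $\gam_j\in\KK_{2i-2}$; from the presentation one reads off $A(\gam)\cong A(\gam_1)\times\cdots\times A(\gam_k)$, and since each factor lies in $\GG$ by the inductive hypothesis, so does $A(\gam)$ by closure under finite direct products. When $n=2i$ is even the argument is the mirror image: $\gam$ is a finite disjoint union of graphs in $\KK_{2i-1}$, $A(\gam)$ is the corresponding finite free product of groups in $\GG$, and closure under finite free products finishes it.

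For the converse I would check that the class $\GG'$ of all groups isomorphic to $A(\Lambda)$ for some $\Lambda\in\KK$ itself satisfies the three defining properties of $\GG$: it contains $\bZ$, the group of the one-vertex graph lying in $\KK_0$; it is closed under finite direct products, because finitely many members of $\KK$ all lie in a common stage, which (using $\KK_m\subseteq\KK_{m+1}$ to fix the parity) we may take to be some $\KK_{2i}$, so that their join lies in $\KK_{2i+1}\subseteq\KK$ while the associated group is the direct product of the given ones; and it is closed under finite free products by the same reasoning with a disjoint union formed at a common odd stage $\KK_{2i-1}$. Minimality of $\GG$ then gives $\GG\subseteq\GG'$, so if $A(\gam)\in\GG$ then $A(\gam)\cong A(\Lambda)$ for some $\Lambda\in\KK$, whence $\gam\cong\Lambda$ by Theorem~\ref{thm:coho-alg} and therefore $\gam\in\KK$.

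I do not anticipate a genuine difficulty: the recursive description of $\KK$ matching the recursive description of cographs has already been laid out in the paragraphs preceding the corollary, so the real content is bookkeeping --- ensuring that finite products and finite coproducts land inside the alternating stages $\KK_{2i-1}$, $\KK_{2i}$ --- plus the single non-formal ingredient, the appeal to Theorem~\ref{thm:coho-alg} (equivalently, the classical fact that the graph is recoverable from the group), which is what allows the group-theoretic membership $A(\gam)\in\GG$ to be transported back into a statement about $\gam$ itself.
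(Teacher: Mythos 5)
Your proof is correct and follows the argument the paper has in mind but leaves implicit (the paper only remarks that the corollary "results immediately from the preceding discussion"). You fill in exactly the right details: the forward direction by induction on the least stage $\KK_n$ containing $\gam$, using the easy (presentation-level) directions of the join/direct-product and disjoint-union/free-product correspondences; and the converse by observing that the class $\{G : G \cong A(\Lambda),\ \Lambda \in \KK\}$ is itself closed under the three operations (using the nesting $\KK_i \subseteq \KK_j$ to align parities), so it contains the minimal class $\GG$, after which the classical fact that $A(\gam)$ determines $\gam$ (Theorem~\ref{thm:coho-alg}) pulls membership back to the graph. This is the same route the paper intends; no discrepancy.
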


For example, note that $\gam\in \KK_0$ if and only if $A(\gam)\cong \bZ$.
We have $\gam\in \KK_1$ if and only if $A(\gam)\cong \bZ^n$ for some
$n$. We have $\gam\in \KK_2$ if and only if $A(\gam)$ is a free product of free abelian groups. A graph $\gam$ lies in $\KK_3$ if and only
if $A(\gam)$ is a direct product of free products of free abelian groups. The following characterizes $\KK_i$ for $i\leq 3$:

\begin{prop}[See~\cite{KK2018JT}]
A graph $\gam$ lies in $\KK_i$ for $i\leq 3$ if and only if $A(\gam)$ has no subgroup isomorphic to $(F_2\times\bZ)*\bZ$.
\end{prop}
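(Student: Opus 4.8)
The plan is to characterize membership in $\KK_i$ for $i \leq 3$ recursively, and match each level against the structural features a subgroup $(F_2 \times \bZ) * \bZ$ would force into $A(\gam)$. Recall $\KK_1$ consists of complete graphs (so $A(\gam)$ abelian), $\KK_2$ of disjoint unions of complete graphs (so $A(\gam)$ a free product of free abelian groups), and $\KK_3$ of joins of elements of $\KK_2$ (so $A(\gam)$ a direct product of free products of free abelian groups). A graph $\gam$ fails to lie in $\KK_3$ precisely when, after peeling off one layer of join decomposition and then one layer of connected components, one still finds a component that is neither complete nor a nontrivial join — equivalently, $\gam$ has a component which, when we look inside, is "too complicated" at depth $3$. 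The cograph $P_4$ itself is the prototypical obstruction to being a cograph at all, but membership in $\KK_i$ for small $i$ is obstructed by a finer phenomenon, and the claim is that the single obstruction is $A(P_4)$-like behavior nested one level deep, captured by $(F_2 \times \bZ) * \bZ = A(\Lambda_0)$ where $\Lambda_0$ is the graph consisting of a path of length two (an edge $\{a,b\}$ with a pendant vertex $c$ at $b$) together with a disjoint isolated vertex $d$.

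**First I would** establish the easy direction: if $\gam \in \KK_i$ for some $i \leq 3$, then $A(\gam)$ cannot contain $(F_2 \times \bZ) * \bZ$. For $\KK_1$ this is clear since $A(\gam)$ is then abelian. For $\KK_2$, $A(\gam) = *_j \bZ^{n_j}$ is a free product of free abelian groups, and by the Kurosh subgroup theorem any subgroup is itself a free product of subgroups of the $\bZ^{n_j}$ (which are free abelian) and a free group; since $F_2 \times \bZ$ is directly indecomposable as a free product and not free abelian, it cannot be a Kurosh factor, so $(F_2\times\bZ)*\bZ$ does not embed. For $\KK_3$, $A(\gam) = \prod_\ell G_\ell$ with each $G_\ell$ a free product of free abelian groups; here I would use Theorem~\ref{thm:gex3}, observing that a square $\Lambda_0$ contains the square $C_4$ is \emph{not} the right graph — rather I would argue directly that $(F_2 \times \bZ) * \bZ$ has trivial center yet contains a $\bZ^2$, is not a nontrivial direct product, and is not a free product of free abelian groups, while any subgroup of a nontrivial direct product $G_1 \times G_2$ that projects nontrivially to both factors has nontrivial intersection structure obstructing the free-product splitting; one cleanly handles this by noting a subgroup of $G_1\times G_2$ that is freely indecomposable must land (up to finite index considerations, but here we are torsion-free) in a conjugate of one factor times a subgroup of the other in a controlled way. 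The cleanest route is probably: a freely indecomposable subgroup of $A(\gam)$ for $\gam \in \KK_3$ must be contained in $A(\mathrm{join\ factor})^g$, hence is a direct product of pieces each of which is a free product of free abelian groups, and $F_2 \times \bZ$ is not such a direct factor.

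**The harder direction** is: if $\gam \notin \KK_i$ for all $i \leq 3$, then $(F_2 \times \bZ)*\bZ$ embeds in $A(\gam)$. Here is where I would lean on Theorem~\ref{thm:gex1}: it suffices to find the defining graph $\Lambda_0$ of $(F_2\times\bZ)*\bZ$ as a subgraph of $\gam^e$. So I would perform a case analysis on why $\gam \notin \KK_3$. If $\gam$ is disconnected with a component that is not a cograph, then that component contains $P_4$, hence $\gam^e$ contains $P_4 \sqcup \{\text{pt}\}$ which certainly contains $\Lambda_0$. If $\gam$ is disconnected with at least two nontrivial components or a component that is a cograph but not a complete graph, one extracts an edge-plus-pendant in one component (using that a connected cograph which is not complete has such a configuration, since it is a nontrivial join of smaller cographs and so contains two nonadjacent vertices inside a join factor, i.e.\ sharing a common neighbor) together with an isolated vertex from another component, giving $\Lambda_0 \subseteq \gam \subseteq \gam^e$. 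If $\gam$ is connected, then since $\gam \notin \KK_3$ and $\KK_3$ is the joins of disjoint-unions of completes, $\gam$ is not a join of graphs each of which is a disjoint union of cliques; one shows this forces either a $P_4$ in $\gam$ (then $\gam^e \supseteq P_4$ and one produces a disjoint vertex inside $\gam^e$ by conjugating — this requires $\gam$ not to be a join, which one arranges, or else one works inside a non-complete join factor) or forces, inside some minimal join factor, a disjoint union with a non-complete piece, again yielding the edge-with-pendant plus a far-away vertex. Throughout, producing the \emph{disjoint} isolated vertex of $\Lambda_0$ inside $\gam^e$ is the delicate point: one uses Proposition~\ref{prop:gex-basic} and the fact that a connected, non-join graph has infinite-diameter extension graph, so there are conjugates of vertices that are non-adjacent to any fixed finite subgraph.

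**I expect the main obstacle** to be the connected case of the hard direction: cleanly organizing the combinatorics of "not a join of disjoint-unions-of-cliques" into a guaranteed copy of $\Lambda_0$ inside $\gam^e$, and in particular manufacturing the isolated vertex of $\Lambda_0$ as a conjugate of a generator that commutes with nothing in the chosen edge-plus-pendant subgraph. The tool for this is the infinite-diameter statement of Proposition~\ref{prop:gex-basic}(2) combined with Theorem~\ref{thm:cograph} to know that a connected graph failing the recursive cograph/clique structure must contain $P_4$ or fail to be a join; once $P_4 \subseteq \gam^e$ is in hand, an element of $\gam^e$ at large distance from that $P_4$ supplies the disjoint vertex, so that $\Lambda_0 \subseteq \gam^e$ and Theorem~\ref{thm:gex1} finishes the embedding $(F_2\times\bZ)*\bZ = A(\Lambda_0) \hookrightarrow A(\gam)$.
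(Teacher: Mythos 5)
The hard direction of your proposal is essentially correct: take $\Lambda_0 = P_3\sqcup\{d\}$, find a copy of $P_3$ together with a far-away conjugate inside a single (maximal) join factor of $\gam^e$ using Proposition~\ref{prop:gex-basic}(2), and apply Theorem~\ref{thm:gex1}. Your case divisions can be tightened (the relevant dichotomy is whether some maximal join factor $\Lambda_i$ with $\Lambda_i\notin\KK_2$ is connected or not, and ``at least two nontrivial components'' is not the right condition), and you must keep all four vertices inside one join factor of $\gam^e$ since the isolated vertex of $\Lambda_0$ cannot see across a join; but the strategy is sound.

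The easy direction contains a genuine gap. The culminating claim that ``a freely indecomposable subgroup of $A(\gam)$ for $\gam\in\KK_3$ must be contained in $A(\mathrm{join\ factor})^g$'' is false, and it proves too much: for $\gam = P_3\in\KK_3$, the group $F_2\times\bZ = A(P_3)$ is itself a freely indecomposable subgroup of $A(\gam)$, yet the maximal join factors of $P_3$ yield only $F_2$ and $\bZ$, neither of which contains $F_2\times\bZ$. The earlier gesture toward Theorem~\ref{thm:gex3} cannot rescue matters either: for $\gam = C_4\in\KK_3$ one has $\gam^e\cong K_{\infty,\infty}$, and its clique graph $(\gam^e)_k$ \emph{does} contain $\Lambda_0$ as a full subgraph (take clique vertices for two edges sharing an endpoint, the common endpoint, and a disjoint edge), so the clique--graph criterion simply cannot obstruct $A(\Lambda_0)<A(C_4)$. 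One way to genuinely close this direction is an induction on $|V(\gam)|$: the Kurosh subgroup theorem handles the base case $\gam\in\KK_2$ (the freely indecomposable, nonabelian group $F_2\times\bZ$ cannot sit in a free product of free abelian groups); in the nontrivial join case $A(\gam)=G_1\times\cdots\times G_m$, the central generator $z$ of the $F_2\times\bZ$ free factor of $H=A(\Lambda_0)$ has nonabelian centralizer in $A(\gam)$, which forces $z_\ell=1$ for some $\ell$ with $G_\ell$ nonabelian; then $G_\ell$ is normal and lies in both $Z_{A(\gam)}(z)$ and $Z_{A(\gam)}(z^w)$, while $Z_H(z)\cap Z_H(z^w)=1$, so $G_\ell\cap H=1$ and $H$ embeds in the RAAG on the strictly smaller $\KK_3$ graph obtained by deleting the $\ell$--th join factor.
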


As an aside, we note that the hierarchy $\KK$ and the associated right-angled Artin groups is closely related to the theory of right-angled
Artin group actions on the interval and on the circle. It turns out that $A(P_4)$ does not act faithfully by $C^2$ diffeomorphisms on $I$
or $S^1$
~\cite{BKK2019JEMS},
so any right-angled Artin group admitting such an action must have its underlying graph in $\KK$. By a result of Kim and the
author~\cite{KK2018JT}, a right-angled Artin group $A(\gam)$ admits a faithful $C^2$ action on $I$ or $S^1$
if and only if it admits a faithful $C^{\infty}$
such action, if and only if $\gam\in\KK_3$.

\subsection{More on the geometry of the extension graph}

As we have suggested in this section, and in particular in the discussion about Proposition~\ref{prop:gex-basic}, the extension graph
of $\gam$ plays a role analogous to that of the curve graph $\CC(\Sigma)$ of a surface, with the role of the mapping class group
in the latter context played by the group $A(\gam)$ in the former context.

The graph $\CC(\Sigma)$ is very complicated in both its local and its global structure. One of the most important foundational results
about the global structure of $\CC(\Sigma)$ is a result of Masur and Minsky which asserts that $\CC(\Sigma)$ is 
\emph{$\delta$-hyperbolic}\index{$\delta$-hyperbolic}~\cite{MM1999}, see also~\cite{Gromov1987,GdlH88}.
That is, there is a $\delta\geq 0$ so that in any geodesic triangle in $\CC(\Sigma)$, a $\delta$-neighborhood of two of the sides of the
triangle contains the third.

Perhaps the easiest example of an infinite diameter $\delta$-hyperbolic metric space is an infinite diameter tree, which is $0$-hyperbolic.
There are many other examples of $\delta$-hyperbolic spaces that are not trees, such the usual hyperbolic spaces. For most surfaces,
the curve graph $\CC(\Sigma)$ is far from being a tree; it has one end, whereas
for example a locally finite tree that admits a proper and cocompact action by an infinite group
will have at least two ends, as follows from Bass--Serre Theory~\cite{Serre1977}.

The geometry of the extension graph is something in between the curve graph and a tree. To state a precise result, we need the notion
of a quasi-isometry. Let $f\colon X\longrightarrow Y$ be a function between metric spaces. 
Then we say that $f$ is a \emph{quasi-isometry}\index{quasi-isometry}
if there are constants
$\lambda\geq 1$ and $C\geq 0$ such that for all $x,z\in X$, we have
\[\frac{1}{\lambda}\cdot d_X(x,z)-C\leq d_Y(f(x),f(z))\leq \lambda \cdot d_X(x,z)+C,\]
and where for all $y\in Y$ there exists an $x\in X$ such that
\[d_Y(f(x),y)\leq C.\] Here, the distance functions are all interpreted in the relevant spaces.
A quasi-isometry can be thought of a function that is bi-Lipschitz on a large scale. For instance, the integers equipped
with the metric induced from
the real line are quasi-isometric to the real line, and any two finite-diameter metric spaces are quasi-isometric to each other, but
an infinite-diameter metric space is not quasi-isometric to a finite-diameter metric space.

The relation induced by quasi-isometry is
an equivalence relation on metric spaces, and so one often speaks of the quasi-isometry class of a metric space. The quasi-isometry
class of a finitely generated group is the quasi-isometry class of its Cayley graph, equipped with the graph metric;
see~\cite{dlHarpe2000} for more details, for example.

In coarse geometry, one often searches for properties of metric spaces that are invariant under quasi-isometry. Examples of such properties
include $\delta$-hyperbolicity and the number of ends.

A metric space is called a \emph{quasi-tree}\index{quasi-tree}
if it contains a $0$-hyperbolic metric space in its quasi-isometry class. Whereas simplicial
trees are $0$-hyperbolic, the converse is not quite true: a geodesic metric space is $0$-hyperbolic if and only if it is an $\bR$-tree. We will
not discuss $\bR$-trees any further, since they are not necessary for our discussion.
We specialize the definition of a quasi-tree slightly: if $\gam$ is a graph
equipped with the graph metric,
we call it a quasi-tree if it contains a simplicial tree in its quasi-isometry class.

\begin{thm}[See~\cite{KK2013}]\label{thm:quasi-tree}
Let $\gam$ be a connected graph. Then $\gam^e$ is a quasi-tree, and is in particular $\delta$-hyperbolic. More precisely:
\begin{enumerate}
\item
If $\gam$ splits as a nontrivial join, then $\gam^e$ has finite diameter and is hence quasi-isometric to a point.
\item
If $\gam$ does not split as a nontrivial join then $\gam^e$ is quasi-isometric to a regular simplicial tree of countable degree.
\end{enumerate}
\end{thm}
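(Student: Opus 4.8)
Statement (1) follows at once from what we already have. Suppose $\gam=\gam_1*\gam_2$ is a nontrivial join. Since $\gam$ is connected, Proposition~\ref{prop:gex-basic}(1) gives that $\gam^e$ is connected, and Proposition~\ref{prop:gex-basic}(2) then forbids $\gam^e$ from having infinite diameter; hence $\diam(\gam^e)<\infty$, and a connected graph of finite diameter is quasi--isometric to a point. From now on assume $\gam$ is connected, is not a nontrivial join, and --- the one--vertex case being trivial --- has at least two vertices; by Proposition~\ref{prop:gex-basic}(1)--(2) the graph $\gam^e$ is connected and of infinite diameter, and it remains to show that it is quasi--isometric to the $\aleph_0$--regular simplicial tree $T$.

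The plan has two parts: first show that $\gam^e$ is a quasi--tree, then identify it. For the first part I would verify \emph{Manning's bottleneck criterion}: a connected graph is a quasi--tree if and only if there is $\Delta>0$ so that for every pair of vertices $x,y$ there is a point $m$ lying on some geodesic from $x$ to $y$, roughly midway between them, such that \emph{every} path from $x$ to $y$ meets the ball $B(m,\Delta)$. To locate such bottlenecks I would use the description of the vertices of $\gam^e$ as conjugates $v^g$ of vertex generators, together with the normal form theory of $A(\gam)$ (free reduction and shuffling of commuting letters solve the word and conjugacy problems, as recalled before Theorem~\ref{thm:cent}). The key lemma to isolate is: if $x=v^g$ and $y=w^h$, and $g^{-1}h$ has reduced form $s_1s_2\cdots s_k$ as a product of letters, then the ``middle'' conjugate $v^{\,g s_1\cdots s_{\lfloor k/2\rfloor}}$ is \emph{coarsely unavoidable}, i.e.\ every path in $\gam^e$ from $x$ to $y$ passes within a constant depending only on $\gam$ of it; one then takes $m$ to be this vertex.

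The heart of the proof --- and the step I expect to be the main obstacle --- is this bottleneck lemma: making rigorous that a geodesic in $\gam^e$ must ``track'' the reduced word of a connecting element and that any detour is expensive. The mechanism I would use is a divergence estimate. The commutation freedom available when one reshuffles a block of a word is controlled by $\diam(\gam)$ and the maximal clique size (the latter shared by $\gam$ and $\gam^e$ by Proposition~\ref{prop:gex-basic}(3)), so a path avoiding the middle syllable is forced either to grow without bound or to exhibit a configuration of pairwise--commuting conjugates incompatible with the centralizer structure of Theorem~\ref{thm:cent}. Carefully bookkeeping which conjugates are permitted to be close in $\gam^e$ --- a genuine local--to--global argument --- is where the real work lies, and is carried out in~\cite{KK2013}. (A posteriori the quasi--tree property also follows from Hagen's theorem that the contact graph of a $\catz$ cube complex is a quasi--tree, applied to the universal cover of the Salvetti complex $\mS(\gam)$, via the coarse identification of $\gam^e$ with that contact graph; but this post--dates~\cite{KK2013}.)

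Finally, granting that $\gam^e$ is a quasi--tree, it is quasi--isometric to a simplicial tree; since $V(\gam^e)\subset A(\gam)$ is countable this tree may be taken countable, and it is unbounded because $\diam(\gam^e)=\infty$. To see it is the $\aleph_0$--regular tree $T$ one needs that $\gam^e$ branches infinitely, and does so uniformly. A short combinatorial argument shows that, because $\gam$ is connected, not a join, and has at least two vertices, there is an edge $\{v,x\}$ of $\gam$ such that neither $\lk(v)$ nor $\lk(x)$ is a clique. Then $A(\lk(v))$ is nonabelian, so $\lk(v)^e$ is infinite, and among the neighbors of a conjugate $v^g$ in $\gam^e$ are the infinitely many distinct conjugates $x^{cg}$, $c\in A(\lk(v))$, each a conjugate of the generator $x$ and hence --- as $\lk(x)$ is not a clique --- itself of infinite valence, so the branching at $v^g$ ``goes deep'' in infinitely many directions. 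Since $A(\gam)$ acts on $\gam^e$ with finitely many orbits of vertices and of edges, and every vertex of $\gam^e$ lies within a bounded distance of some conjugate of $v$, this infinite branching is uniform; the standard fact that an unbounded quasi--tree with such uniform infinite branching is quasi--isometric to the $\aleph_0$--regular tree then completes the argument. I would defer the precise form of this last step, as well as of the bottleneck lemma, to~\cite{KK2013}.
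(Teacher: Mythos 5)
Part (1) is correct, and the high--level plan for part (2) --- check Manning's bottleneck criterion and then identify the resulting tree by establishing uniform infinite branching --- is a reasonable strategy; your closing branching argument is essentially sound (once you note that since $\lk(v)$ is not a clique, not every neighbor of $v$ can dominate $v$, which yields infinitely many neighbors of $v$ of the form $y^c$, $c\in C(v)$). The genuine gap is the proposed bottleneck lemma. You set $m$ to be the ``syllable midpoint'' $v^{g s_1\cdots s_{\lfloor k/2\rfloor}}$ of a reduced expression $s_1\cdots s_k$ for $g^{-1}h$, but this point need not lie roughly midway between $x$ and $y$ in $\gam^e$, and so cannot serve as Manning's $m$. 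The reason is exactly the elliptic/loxodromic dichotomy recorded in Theorem~\ref{thm:acyl}: elements of join subgroups can have arbitrarily long reduced expressions while moving a given vertex of $\gam^e$ a bounded distance, so word length in $A(\gam)$ does not control distance in $\gam^e$. Concretely, in $\gam=P_4$ with vertices $a$--$b$--$c$--$d$, take $v=b$, $g=1$, and $h=a^n(ad)^m$ with $n\gg m$. Since $a$ and $b$ commute, $v^h=b^{(ad)^m}$, and $d_{\gam^e}(v,v^h)$ grows linearly in $m$ because $ad$ is cyclically reduced with support not contained in a join, hence loxodromic by Theorem~\ref{thm:acyl}; yet the half--length prefix of $h$ is a power of $a$, so your candidate $m$ is $b$ itself --- an endpoint of the geodesic, not a midpoint. (The prescription is also ambiguous, since reduced forms in $A(\gam)$ are unique only up to shuffles, and different choices give different prefixes.) So the lemma as stated does not verify Manning's criterion, and the slogan that a $\gam^e$--geodesic ``tracks'' the reduced word of a connecting element is false.

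The structural input actually used in Kim--Koberda is of a different nature. One first shows that $\gam^e$ is an increasing union of finite graphs $\gam=\gam_0\subset\gam_1\subset\cdots$, where $\gam_{n+1}$ is obtained from $\gam_n$ by doubling along the star $\st_{\gam_n}(v)$ of a suitable vertex. Stars of vertices in $\gam^e$ have diameter at most $2$ and separate $\gam^e$; the copies of $\gam$ inside $\gam^e$ are glued along these stars in a tree pattern, and it is the stars --- not syllable midpoints --- that play the role of bottlenecks. The quasi--isometry to the countably regular tree is then read off directly from this inductive star--doubling description. (Your parenthetical aside about Hagen's contact--graph theorem is a valid alternative route, but, as you note, a genuinely different proof.)
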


More interesting than the mere description of the quasi-isometry type of the extension graph is the interaction between group elements
in $A(\gam)$ and $\gam^e$. Here, the analogy between the mapping class group and $A(\gam)$ develops further, with the natural
isometric action of $A(\gam)$ on $\gam^e$ mirroring many of the properties of the natural isometric action of $\Mod(\Sigma)$ on
$\CC(\Sigma)$.

The classical
\emph{Nielsen--Thurston classification}\index{Nielsen-Thurston classification}
~\cite{Thurston-bull88,FM2012} says that a mapping class is either finite order, reducible (i.e.~some power fixes the homotopy
class of an essential nonperipheral loop on the surface $\Sigma$), or pseudo-Anosov. As discussed around
Proposition~\ref{prop:gex-basic}, this lattermost type of mapping class is characterized by the fact that every orbit of its action
 on $\CC(\Sigma)$ is unbounded. Finite order and reducible mapping classes are characterized by every orbit in
$\CC(\Sigma)$ being bounded (and in fact having a periodic point in $\CC(\Sigma)$). Algebraically, a reducible mapping class has
a copy of $\bZ^2$ in its centralizer~\cite{BLM1983}, whereas a pseudo-Anosov mapping classes have virtually
cyclic centralizers~\cite{FLP1991}.

Further insight into the action of $\Mod(\Sigma)$ is provided by a result of Bowditch
\cite{Bowditch2008}, which says that the action of $\Mod(\Sigma)$ 
on $\CC(\Sigma)$ is \emph{acylindrical}\index{acylindrical action}.
Acylindricity is a notion of proper discontinuity for group actions on non-proper metric spaces
which are not properly discontinuous.
Following Bowditch (cf.~\cite{Sela-inv97,Koberda-whatis})
we say that an action of a group $G$ on a metric space $X$ is acylindrical if for all $r>0$ there
exist constants $R$ and $N$ such that for all pairs of points $x,y\in X$ with $d(x,y)\geq R$, we have
\[|\{g\in G\mid d(gx,x),d(gy,y)\leq r\}|\leq N.\] In other words, the $r$-quasi-stabilizer of $R$-separated points is uniformly finite.
Bowditch showed that if $X$ is a $\delta$-hyperbolic graph and $G$ acts
isometrically and acylindrically on $X$ then each $g\in G$ is either
\emph{elliptic}\index{elliptic isometry} or \emph{loxodromic}\index{loxodromic isometry}.
The former of these means that some (equivalently every) orbit of $G$ on $X$ is bounded.
A loxodromic element is characterized by having a positive asymptotic translation distance in $X$. Moreover, the
asymptotic translation length
is bounded away from zero by a constant that depends only on the hyperbolicity and acylindricity constants.
The Nielsen--Thurston classification can be thus recast in terms of acylindricity: a mapping class is pseudo-Anosov if and only if
it is loxodromic as an isometry of $\CC(\Sigma)$.

For extension graphs, one has a picture that is analogous to curve graphs.

\begin{thm}[See~\cite{KK2013b}]\label{thm:acyl}
Let $\gam$ be a connected graph with at least two vertices.
The action of $A(\gam)$ on $\gam^e$ is acylindrical. An element $1\neq g\in A(\gam)$ is elliptic if and only if $g$ is conjugate into
a subgroup $A(J)$, where $J$ is a subgraph of $\gam$ that is a nontrivial join. Equivalently, $g$ is elliptic if and only if its centralizer
in $A(\gam)$ is noncyclic.

An element $1\neq g\in A(\gam)$ is loxodromic if and only if its centralizer is cyclic. An element $g$ is cyclically reduced and loxodromic
if and only if $\supp(g)$ is not contained in a subgraph of $\gam$ that splits as a nontrivial join.
\end{thm}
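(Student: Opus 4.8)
The plan is to establish acylindricity first, then invoke Bowditch's elliptic/loxodromic dichotomy for acylindrical actions on $\delta$--hyperbolic graphs (recall from Theorem~\ref{thm:quasi-tree} that $\gam^e$ is $\delta$--hyperbolic), and finally match the resulting geometric dichotomy with the algebraic information supplied by Theorem~\ref{thm:cent}.

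For acylindricity, I would first record the shape of the relevant stabilizers: since $A(\gam)$ acts by conjugation, $\Stab(v^g)=C_{A(\gam)}(v^g)=g^{-1}\langle\st(v)\rangle g$, and an edge stabilizer is an intersection of two such conjugates of star subgroups. The technical heart is the claim that there is a constant $D=D(\gam)$ such that $d_{\gam^e}(u,u')\ge D$ for vertices $u,u'$ of $\gam^e$ implies $\Stab(u)\cap\Stab(u')=\{1\}$. Using the $A(\gam)$--action one may take $u=v\in V(\gam)$, so that an element of the joint stabilizer lies in $\langle\st(v)\rangle$ and centralizes $u'=w^h$; a support argument on parabolic subgroups and their conjugates, in the spirit of Theorem~\ref{thm:cent}, then shows that a nontrivial such element would force $w^h$ to lie uniformly close to $v$ in the quasi--tree $\gam^e$, contradicting $d_{\gam^e}(u,u')\ge D$. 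Granting this, one obtains full acylindricity by the standard quasi--tree argument: given $r>0$, take $R$ sufficiently large in terms of $D$, $\delta$, $r$; if $d_{\gam^e}(x,y)\ge R$ and $g$ displaces both $x$ and $y$ by at most $r$, then $\delta$--thinness forces $g$ to coarsely fix a subsegment of $[x,y]$ of length at least $D$, so $g$ is determined up to uniformly bounded ambiguity by the position of $(gx,gy)$ relative to $[x,y]$, giving the required bound $N=N(\gam,r)$. I expect this joint--stabilizer lemma, together with the bookkeeping in the quantitative step, to be the main obstacle; a conceptually different route is to quasi--isometrically identify $\gam^e$ with Hagen's contact graph of the Salvetti cube complex~\cite{Hagen14} and invoke acylindricity in that setting~\cite{HHS1}.

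With acylindricity in hand, Bowditch's theorem gives that every $1\neq g\in A(\gam)$ is elliptic or loxodromic, and two observations do most of the matching with centralizers and join subgroups. First, if $g$ is conjugate into $A(J)$ for a nontrivial join $J=J_1*J_2\subseteq\gam$, then $g$ is elliptic: fixing vertices $u_i\in J_i$, every $A(J)$--translate of $u_1$ has the form $u_1^{h_1}$ with $h_1\in A(J_1)$ and so still commutes with $u_2$, whence the $A(J)$--orbit of $u_1$ has diameter at most $2$ in $\gam^e$, and conjugating carries this bounded orbit to another bounded orbit. Second, a loxodromic element of an acylindrical action has virtually cyclic stabilizer of its fixed pair at infinity, and $C(g)$ preserves that pair, so by torsion--freeness $C(g)$ is infinite cyclic whenever $g$ is loxodromic. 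Now replace $g$ by a cyclically reduced conjugate, which affects neither ellipticity nor the isomorphism type of $C(g)$. If $C(g)$ is noncyclic, then $g$ cannot be loxodromic, so is elliptic; and Theorem~\ref{thm:cent} gives that $\lk(\supp g)\neq\emptyset$ or $\supp(g)$ is a nontrivial join, so $g$ lies in $A(J)$ for the nontrivial join subgraph $J=\{x\}*\supp(g)$ (with $x\in\lk(\supp g)$) or $J=\supp(g)$, respectively. Conversely one checks directly that each condition --- $\lk(\supp g)\neq\emptyset$, or $\supp(g)$ a nontrivial join --- produces a copy of $\bZ^2$ in $C(g)$. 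Thus, for cyclically reduced $g$, the conditions ``$C(g)$ noncyclic'', ``$g$ is conjugate into $A(J)$ for some nontrivial join $J$'', and ``$g$ is elliptic'' are equivalent, modulo the single implication elliptic $\Rightarrow$ $C(g)$ noncyclic.

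To close that implication --- equivalently, $C(g)$ cyclic $\Rightarrow$ $g$ loxodromic --- I would produce an unbounded orbit by passing to a mapping class group, as in the proof of Proposition~\ref{prop:gex-basic}(2). Let $g$ be cyclically reduced with $C(g)$ cyclic; by the equivalence just recorded, $\lk(\supp g)=\emptyset$ and $\supp(g)$ is not a join. In particular $\gam$ itself is not a join: if $\gam=\gam_1*\gam_2$ nontrivially, then either $\supp(g)$ lies in one factor, forcing a vertex of the other factor into $\lk(\supp g)$, or $\supp(g)$ meets both factors and is hence a join --- both impossible. So one may realize $A(\gam)$ inside $\Mod(\Sigma)$ by high powers of Dehn twists about a filling system $\{\gamma_i\}$ with $\gamma_i\cap\gamma_j=\emptyset$ if and only if $\{v_i,v_j\}\in E(\gam)$, together with an equivariant simplicial map $\gam^e\to\CC(\Sigma)$. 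By the analysis of~\cite{Koberda2012} underlying Proposition~\ref{prop:gex-basic}(2), the conditions $\lk(\supp g)=\emptyset$ and ``$\supp(g)$ not a join'' make the mapping class representing $g$ pseudo-Anosov on $\Sigma$, hence of unbounded orbit in $\CC(\Sigma)$; since $\gam^e\to\CC(\Sigma)$ is $1$--Lipschitz, $g$ has unbounded orbit in $\gam^e$ and is therefore loxodromic. The last sentence of the theorem is then immediate: for cyclically reduced $g$, ``$g$ loxodromic'' $\iff$ ``$C(g)$ cyclic'' $\iff$ (Theorem~\ref{thm:cent} and the converse above) ``$\lk(\supp g)=\emptyset$ and $\supp(g)$ not a nontrivial join'' $\iff$ ``$\supp(g)$ is not contained in any subgraph of $\gam$ splitting as a nontrivial join'', the last step being an elementary check on join subgraphs.
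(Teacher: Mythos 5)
The paper does not prove Theorem~\ref{thm:acyl}; it is cited to \cite{KK2013b}, and the surrounding text only develops the analogy with Bowditch's acylindricity theorem for $\CC(\Sigma)$ and records the Behrstock--Charney rank-one characterization. So your argument must be judged on its own. The architecture (establish acylindricity, invoke Bowditch's elliptic/loxodromic dichotomy, match against Theorem~\ref{thm:cent}) is sound, and three of the four implications in the matching are handled correctly: conjugate-into-a-join implies elliptic; loxodromic implies cyclic centralizer via virtually cyclic stabilizers of the limit pair plus torsion-freeness; and the link/join alternative from Theorem~\ref{thm:cent}, together with its converse. You also correctly flag that your acylindricity step rests on an unproven joint-stabilizer lemma plus a nontrivial quantitative bootstrap; the contact-graph route you mention is a legitimate way to outsource that.

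The genuine gap is in the closing implication, ``cyclic centralizer implies loxodromic.'' You assert that a cyclically reduced $g$ with $\lk(\supp g)=\emptyset$ and $\supp(g)$ not a join has a pseudo-Anosov realization in $\Mod(\Sigma)$. This does not follow from Proposition~\ref{prop:gex-basic}(2) or \cite{Koberda2012}, and it fails whenever $\supp(g)\subsetneq V(\gam)$. Indeed, $g$ lies in $\langle T_v^N : v\in\supp(g)\rangle$, and each $T_v^N$ is supported on the annulus $A_v$, so $g$ is supported on $\bigcup_{v\in\supp(g)}A_v$; this is a proper subsurface of $\Sigma$ as soon as some vertex $w$ is missing from $\supp(g)$, since the core of $A_w$ is not contained in the other annuli. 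Thus $g$ is reducible on $\Sigma$ and its $\CC(\Sigma)$-orbit is bounded, and your $1$-Lipschitz map $\gam^e\to\CC(\Sigma)$ then gives no lower bound on displacement in $\gam^e$. The condition $\lk(\supp g)=\emptyset$ only says each $\gamma_w$ ($w\notin\supp g$) meets some $\gamma_v$ ($v\in\supp g$), which is much weaker than the filling you would need. A concrete instance is $\gam=C_5$ and $g=v_1v_3v_5$: after cyclic reduction this has cyclic centralizer and must be loxodromic by the theorem, yet it is supported on the proper subsurface $A_1\cup A_3\cup A_5$. Closing this step therefore needs an argument intrinsic to $\gam^e$ (as in \cite{KK2013b}) or via the rank-one isometry criterion of Behrstock--Charney that the paper records immediately after the theorem statement.
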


The join/non-join dichotomy for graphs and their associated right-angled Artin groups runs deep,
and analogies between $A(\gam)$ and $\gam^e$ with $\Mod(\Sigma)$ and $\CC(\Sigma)$ are extensive. Many
(but not all; see~\cite{KMT2014}) of the instances of these analogies can be and have
 been incorporated into the theory of hierarchically hyperbolic groups.

A further equivalence in Theorem~\ref{thm:acyl} is given by a result of Behrstock--Charney~\cite{BC2010}, which asserts that a
nontrivial element of $A(\gam)$ is loxodromic if and only if, when viewed as a deck transformation of the universal cover of
the Salvetti complex $\mS(\gam)$, it acts as a \emph{rank one isometry}\index{rank one isometry}.
That is, the corresponding deck group element
has an axis that does not bound a half-plane (cf.~\cite{BH1999}).

\subsection{The extension graph as a quasi-isometry and commensurability invariant}

A basic problem in geometric group theory is to sort groups into quasi-isometry classes. For right-angled Artin groups, the natural
question is to decide when two right-angled Artin groups $A(\gam)$ and $A(\Lambda)$ are quasi-isometric. Much progress on understanding
the quasi-isometric classification of right-angled Artin groups has been made,
for instance by Behrstock--Neumann~\cite{BehrNeu08}, Behrstock--Januszkiewicz--Neumann~\cite{BJN2010},
Bestvina--Kleiner--Sageev~\cite{BKS2008},
Huang~\cite{Huang18}, and Margolis~\cite{Margolis20} (see also~\cite{HuangKlein18,Casals16-agt}).
Thus, we can consider the following equivalence relation on finite graphs:
$\gam$ is equivalent to $\Lambda$ if $A(\gam)$ and $A(\Lambda)$
are quasi-isometric to each other. Other than the cases we have cited, understanding this equivalence relation in full is still unresolved.

Certainly two right-angled Artin groups that are isomorphic to each other will be quasi-isometric to each other, and from
Theorem~\ref{thm:coho-alg}, we know that if $A(\gam)$ and $A(\Lambda)$ are isomorphic to each other then $\gam$ and $\Lambda$
are isomorphic as graphs. There is yet another equivalence relation on finite graphs that is coarser than isomorphism and yet finer
than quasi-isometry.

If $H<G$ are groups with $G$ finitely generated and $[G:H]<\infty$, then with respect to any finite generating sets for $G$ and $H$, the
inclusion of $H$ into $G$ is a quasi-isometry on the level of Cayley graphs, as is readily verified. It follows that if $G$ and $H$ are
finitely generated groups,
and both $G$ and $H$ contain a finite index subgroup isomorphic to $K$, then $G$ and $H$ are quasi-isometric. In this case, we say
that $G$ and $H$ are \emph{commensurable}\index{commensurable}.
Like quasi-isometry, commensurability is an equivalence relation on groups.
It is well known that commensurability of groups is a strictly finer equivalence relation than
quasi-isometry. For instance, one can take closed hyperbolic $3$-manifolds whose volumes are not rational multiples of each other. Then,
the corresponding fundamental groups are both quasi-isometric to hyperbolic space, but are not commensurable
~\cite{GdlH88}. Even among
right-angled Artin groups, commensurability is a strictly finer equivalence relation (see~\cite{Huang18,CKZ19}).

It is easy to produce pairs of non-isomorphic graphs which give rise to commensurable right-angled Artin groups. For instance, consider
a graph $\gam$ and $v\in V(\gam)$. There is a 
surjective homomorphism $A(\gam)\longrightarrow\bZ/2\bZ$ that sends $v$ to the nontrivial element in
$\bZ/2\bZ$ and sends the remaining vertices to the identity. It is an exercise in combinatorial group theory for the reader to prove that
the kernel of this homomorphism is isomorphic to $A(\Lambda)$, where $\Lambda$ is obtained by taking two copies of $\gam$ and
identifying them along $\st(v)$. If $v$ is not central in $A(\gam)$ then it is easy to see that $\Lambda$ and $\gam$ fail to be isomorphic
graphs,
but $A(\gam)$ and $A(\Lambda)$ are clearly commensurable. This construction can be repeated \emph{ad infinitum}, generally producing
infinite families of non-isomorphic graphs whose associated right-angled Artin groups are all commensurable.

There are pairs of graphs which give rise to commensurable right-angled Artin groups, but for which a commensuration between them is less
obvious. The reader is challenged to prove for themself that the groups $A(P_4)$ and $A(P_5)$ are commensurable, where
as before $P_4$ and $P_5$ denote the paths of length three and length four respectively
(cf.~\cite{CKZ19}). The fact that $A(P_4)$ and $A(P_5)$ are
commensurable also shows that the extension graph is hopeless as a complete commensurability invariant. Again, the reader is
encouraged to convince themself that the extension graphs of $P_4$ and $P_5$ are not isomorphic to each other. It turns out that
in both cases, the corresponding extension graphs are trees, and what distinguishes them in their isomorphism type is the location
of degree one vertices.

So, let us consider a
connected graph $\gam$ with no degree one vertices. In order to identify the extension graph algebraically and in an
unambiguous way, it would help to be able to identify vertices and their conjugates, up to powers. For this, it helps to assume that
$\gam$ is connected, has no triangles, and has no squares. Under these assumptions, if $v$ is a vertex of $\gam$ then $v$ contains
a nonabelian free group in its centralizer. Conversely, suppose that $g\in A(\gam)$ has a nonabelian free group in its centralizer.
Then, since $\gam$ has no triangles and no squares, every nontrivial join in $\gam$ is merely the star of a vertex of $\gam$, and
the structure of such a star is the join of a single vertex and a completely disconnected graph. It follows that if $g$
has a nonabelian free group in its centralizer, then $g$ is conjugate to a nonzero power of a vertex generator of $\gam$. It follows
that maximal cyclic subgroups of $A(\gam)$ whose centralizers contain nonabelian free groups are in bijection with conjugates of 
vertex generators of $A(\gam)$. Since the adjacency relation in $\gam^e$ is just commutation in $A(\gam)$, we immediately obtain:

\begin{thm}[See~\cite{KK2013b}]
Let $\gam$ be a finite
connected graph with no degree one vertices, no triangles, and no squares. Then the extension graph $\gam^e$ is
a commensurability invariant for $A(\gam)$. That is, if $A(\gam)$ is commensurable with $A(\Lambda)$ then $\gam^e\cong
\Lambda^e$.
\end{thm}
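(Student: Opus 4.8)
The plan is to describe $\gam^e$ intrinsically in terms of the abstract group $A(\gam)$, in a way that is manifestly insensitive to passing to a finite-index subgroup. For a group $G$, let $\Phi(G)$ be the graph whose vertices are the maximal cyclic subgroups $C<G$ such that the centralizer $Z_G(C)$ contains a nonabelian free subgroup, and in which $C_1$ and $C_2$ span an edge precisely when $[C_1,C_2]=1$. The argument then has three parts: (i) under the hypotheses on $\gam$ one has $\gam^e\cong\Phi(A(\gam))$; (ii) if $K<G$ has finite index then $\Phi(K)\cong\Phi(G)$; (iii) the hypotheses defining the class of $\gam$ are themselves commensurability invariants. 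Granting these, a common finite-index subgroup $K$ of $A(\gam)$ and $A(\Lambda)$ yields $\gam^e\cong\Phi(A(\gam))\cong\Phi(K)\cong\Phi(A(\Lambda))\cong\Lambda^e$. The degenerate case in which $\gam$ is a single vertex is trivial, so I assume $|V(\gam)|\ge 2$.

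For (i), first note that $\gam$ is not a nontrivial join: a join $\gam_1*\gam_2$ with both $|\gam_i|\ge 2$ contains a square, while if some $\gam_i$ is a single vertex $v$ then $\gam=\st(v)$ with $\lk(v)$ edgeless (no triangles), forcing $\gam$ to be a single edge or to have degree-one vertices. Consequently every vertex $v$ has $|\lk(v)|\ge 2$, and $Z_{A(\gam)}(v)=A(\st(v))\cong\bZ\times F_{|\lk(v)|}$ contains $F_2$ by Theorem~\ref{thm:cent}; as no vertex generator is a proper power, each $\langle v^g\rangle$ is a vertex of $\Phi(A(\gam))$. Conversely let $C=\langle c\rangle$ be maximal cyclic with $Z_{A(\gam)}(c)$ containing $F_2$; conjugate so that $c$ is cyclically reduced and put $S=\supp(c)$. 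If $|S|\ge 2$, then Theorem~\ref{thm:cent} gives that either $S$ is a nontrivial join or $\lk(S)\ne\emptyset$; using that $\gam$ has no triangle or square, the join decompositions of $S$ are of the form $\{a\}*(\text{edgeless})$ and $\lk(S)$ is at most a single vertex, and in every case $\langle S\cup\lk(S)\rangle$ is of the form $\bZ\times F$ or $\bZ^2$ in which the centralizer of an element with full support $S$ lies in a copy of $\bZ^2$ — contradicting $Z_{A(\gam)}(c)\supseteq F_2$. Hence $|S|=1$, so $c$ is a power of a vertex generator and, by maximality, $C=\langle v^g\rangle$. Finally $v^g\mapsto\langle v^g\rangle$ is a bijection from $V(\gam^e)$ onto the vertices of $\Phi(A(\gam))$ (if $\langle v^g\rangle=\langle w^h\rangle$ then $\bar v=\pm\bar w$ in the abelianization, hence $v=w$ and $v^g=w^h$), and it intertwines the edge relation $[v^g,w^h]=1$ with elementwise commutation; so $\gam^e\cong\Phi(A(\gam))$.

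For (ii), let $K<G$ have finite index, $G=A(\gam)$. For a vertex $C=\langle v^g\rangle$ of $\Phi(G)$, the group $C\cap K$ is nontrivial and maximal cyclic in $K$: any cyclic $\langle y\rangle<K$ containing a nontrivial power of $v^g$ forces $y\in Z_G(v^g)=\langle v^g\rangle\times A(\lk(v))^g$, and a short torsion-free argument then gives $y\in\langle v^g\rangle$, so $\langle y\rangle=C\cap K$. Also $Z_K(C\cap K)=Z_G(v^g)\cap K$, which still contains $F_2$ since a finite-index subgroup of $\bZ\times F_{|\lk(v)|}$ does. Conversely, if $D<K$ is maximal cyclic with $Z_K(D)$ containing $F_2$, let $C$ be the maximal cyclic subgroup of $G$ containing $D$; then $Z_G(C)\supseteq Z_K(D)\supseteq F_2$, so by (i) $C$ is a vertex of $\Phi(G)$, and $D=C\cap K$ by maximality of $D$ in $K$. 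The bijection $C\mapsto C\cap K$ respects adjacency both ways, because nontrivial powers $c_1^a$ and $c_2^b$ commute exactly when $c_1$ and $c_2$ do (as $c_2^b\in Z(c_1^a)=Z(c_1)$, and symmetrically). Thus $\Phi(K)\cong\Phi(G)$. For (iii): $A(\gam)$ is one-ended (its defining graph is connected, has at least two vertices, and is not a join), so $\Lambda$ is connected with at least two vertices by Theorem~\ref{thm:free-prod}; $\mathrm{cd}\,A(\Lambda)=\mathrm{cd}\,A(\gam)\le 2$, so $\Lambda$ has no triangle by Theorem~\ref{thm:coho-dim}; $A(\Lambda)$ has no $F_2\times F_2$, since a finite-index subgroup of $F_2\times F_2$ contains a product of two nonabelian free groups, so $\Lambda$ has no square by Proposition~\ref{prop:square}; and the absence of degree-one vertices descends by a similar analysis (or is simply built into the hypotheses). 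Chaining the isomorphisms finishes the proof.

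The step I expect to be the main obstacle is (ii). The subtlety is that $K$ need not be a right-angled Artin group, so one cannot simply re-apply the characterization from (i); instead one must push every question about maximal cyclic subgroups and their centralizers up into the ambient group $A(\gam)$, where Theorem~\ref{thm:cent} and the explicit product structure $Z(v^g)=\langle v^g\rangle\times A(\lk(v))^g$ are available, and then descend again, checking carefully that both the ``centralizer contains a nonabelian free group'' condition and the commutation relation are preserved in each direction. A secondary point needing care is part (iii): verifying that the no-degree-one hypothesis — precisely the condition that forces $\Phi(A(\gam))$ to recover all of $\gam^e$, rather than only the subgraph supported on vertices of degree at least two — is genuinely a commensurability invariant.
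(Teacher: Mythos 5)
Your proposal takes essentially the same route as the paper: the core idea—identify $V(\gam^e)$ with the set of maximal cyclic subgroups whose centralizers contain a nonabelian free group, using the no-triangle/no-square hypotheses to force $\supp(c)$ to be a single vertex via Theorem~\ref{thm:cent}, and the no-degree-one hypothesis to guarantee $|\lk(v)|\ge 2$—is exactly what the paper sketches in the paragraph preceding the theorem. The difference is that the paper stops after establishing the algebraic characterization of vertex-conjugates and declares that the theorem follows ``immediately,'' whereas you make explicit the two steps that are actually needed: (ii) that this characterization is stable under passage to and from a finite-index subgroup $K$ (correctly handled by pushing centralizer and maximality questions up into $A(\gam)$, where $Z_G(c^n)=Z_G(c)$ and the product structure $Z(v^g)\cong\bZ\times F_{|\lk(v)|}$ are available), and (iii) that the hypotheses on $\gam$ force the same hypotheses on $\Lambda$. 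Your verification of (ii) is correct and is genuinely the content the paper leaves implicit. The only soft spot is the tail of (iii): you establish that $\Lambda$ is connected with $\ge 2$ vertices, triangle-free, and square-free, but the claim that $\Lambda$ has no degree-one vertices is asserted to follow ``by a similar analysis (or is simply built into the hypotheses)'' without an actual argument. This matters, since if $\Lambda$ had a degree-one vertex $v$, then $Z(v)\cong\bZ^2$, so $\langle v^g\rangle$ would not appear in $\Phi(A(\Lambda))$ and one would only recover a proper subgraph of $\Lambda^e$. One clean way to close this: a maximal $\bZ^2$ in a triangle-free, square-free $A(\Lambda)$ is conjugate to an edge subgroup $\langle a,b\rangle$, and the number of its rank-one direct factors with nonabelian free centralizer is $2$ exactly when both endpoints have degree $\ge 2$, and $\le 1$ otherwise; this count passes to finite-index subgroups, so having a degree-one endpoint somewhere would contradict the corresponding count for $\gam$. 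Alternatively, read the theorem as implicitly requiring $\Lambda$ to satisfy the same standing hypotheses, in which case (iii) is unnecessary—but either way the gap should be acknowledged rather than elided.
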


Incidentally, the analogy between right-angled Artin groups and mapping class groups persists here as well, since the
curve graph can be obtained from the mapping class group in the same way that the extension graph is obtained from $A(\gam)$.
Specifically, let $T$ be a Dehn twist about a simple closed curve on $\Sigma$. Then $T$ is centralized by two maximal
rank torsion-free abelian subgroups
of $\Mod(\Sigma)$ which intersect in a copy of $\bZ$. This can be used to algebraically characterize a
(nonzero power of a) Dehn twist as an element of
$\Mod(\Sigma)$. A Dehn twist unambiguously identifies the homotopy class of a simple closed curve on $\Sigma$, and
the adjacency relation
in $\CC(\Sigma)$ coincides with commutation of Dehn twists in $\Mod(\Sigma)$. Thus, the curve graph can be recovered
algebraically from $\Mod(\Sigma)$. It follows in particular that
automorphisms of $\Mod(\Sigma)$ induce automorphisms of $\CC(\Sigma)$, a fact which can be used to prove various rigidity results
(see~\cite{ivanov,BrenMar04,LeinMar06}, for instance).

As we have seen, we can have commensurable right-angled Artin groups with non-isomorphic extension graphs. It is also possible
to have two right-angled Artin groups whose extension graphs are isomorphic
and yet the groups are not quasi-isometric to each other (see Example 5.22 in~\cite{Huang2016}).
So, there is similarly no hope that extension graphs
form a complete quasi-isometry invariant for right-angled Artin groups.

Recall from Subsection~\ref{ss:aut} that a full set of generators for $\Aut(A(\gam))$ is known, and from the description of
these generators, it is immediate that $\Out(A(\gam))$ is finite if and only if $\Aut(A(\gam))$ admits no nontrivial partial conjugations
and no dominated transvections. Graphs for which $\Out(A(\gam))$ is finite can thus be identified through a finitary combinatorial
analysis, since it suffices to check that there are no separating stars of vertices and no pairs of vertices where one dominates the
other (see~\cite{charney-farber} for a discussion of the genericity of this phenomenon).

The following result was established by Huang~\cite{Huang17}:

\begin{thm}
Suppose $\gam$ is a graph for which $\Out(A(\gam))$ is finite. The following are equivalent:
\begin{enumerate}
\item
The group $A(\Lambda)$ is quasi-isometric to $A(\gam)$.
\item
The group $A(\Lambda)$ is isomorphic to a finite index subgroup of $A(\gam)$.
\item
The graphs $\Lambda^e$ and $\gam^e$ are isomorphic.
\end{enumerate}
\end{thm}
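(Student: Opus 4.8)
The plan is to establish the cycle of implications $(2)\Rightarrow(1)\Rightarrow(3)\Rightarrow(2)$. The implication $(2)\Rightarrow(1)$ is the standard fact, recalled just above, that the inclusion of a finite-index subgroup is a quasi-isometry of Cayley graphs, so all the content lies in $(1)\Rightarrow(3)$ and $(3)\Rightarrow(2)$. Throughout, the hypothesis that $\Out(A(\gam))$ be finite is used in the shape given by Laurence's generators~\cite{Laurence95} (Subsection~\ref{ss:aut}): $\gam$ has no separating vertex star and admits no nontrivial domination, so that $A(\gam)$ carries no partial conjugations and no transvections. Combinatorially this rigidity says that the subgroups $A(J)<A(\gam)$ with $J$ a join subgraph sit inside $A(\gam)$ in an essentially canonical way; geometrically it says that the standard product regions of the $\CAT(0)$ cube complex on which $A(\gam)$ acts cannot be sheared, and it is precisely this inflexibility that both remaining implications exploit. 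Neither can hold without it: $A(P_4)$ and $A(P_5)$ are quasi-isometric but have non-isomorphic extension graphs, and Example~5.22 of~\cite{Huang2016} exhibits isomorphic extension graphs belonging to non-quasi-isometric groups.

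For $(1)\Rightarrow(3)$ I would work inside $X_\gam$, the $\CAT(0)$ cube complex obtained as the universal cover of the Salvetti complex $\mS(\gam)$, on which $A(\gam)$ acts geometrically. Inside $X_\gam$ one singles out the \emph{standard flats}, namely the convex subcomplexes that are $A(\gam)$-translates of the subcomplexes carried by the subgroups $A(J)$ for $J$ a join subgraph of $\gam$, and among them the \emph{standard lines} (translates of the axes of the vertex generators) and the \emph{maximal standard product regions}. The crucial first step, and the heart of the argument, is to show that a quasi-isometry $f\colon A(\gam)\to A(\Lambda)$ coarsely permutes the maximal standard product regions; this is the pattern-rigidity statement, and it is here that finiteness of $\Out$ is indispensable, building on Bestvina--Kleiner--Sageev~\cite{BKS2008} and Behrstock--Neumann~\cite{BehrNeu08} and sharpening them to the present setting. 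Granting it, $f$ descends to a bijection between coarse-equivalence classes of standard lines that preserves the relation whereby two such lines span a common flat. But, by construction, those classes are exactly the vertices of $\gam^e$ and that relation is exactly the adjacency of $\gam^e$ (commutation of the corresponding conjugates of vertex generators), so $f$ induces a graph isomorphism $\gam^e\to\Lambda^e$.

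For $(3)\Rightarrow(2)$ I would start from an abstract isomorphism $\phi\colon\gam^e\to\Lambda^e$ and first use it to transport intrinsic structure: the maximal cliques of $\gam^e$, and the join subgraphs of $\gam^e$, go to the corresponding features of $\Lambda^e$, so $\phi$ carries the combinatorial pattern of standard flats of $X_\gam$ to that of $X_\Lambda$. One then upgrades this to a cubical identification of $X_\gam$ with $X_\Lambda$ --- here again the rigidity coming from finite $\Out$ is what lets one reconstruct the cube complex from this pattern, up to the harmless finite ambiguity coming from $\Aut(\gam)$ --- so that $A(\gam)$ and $A(\Lambda)$ appear as two geometric and special actions on one $\CAT(0)$ cube complex $X$. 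A common finite-index subgroup is then extracted by comparing the two actions on $X$ and applying the separability and canonical-completion machinery for compact special cube complexes~\cite{Wise2012}; this is morally the same device that made the extension graph a commensurability invariant earlier in this section, the extra work being to keep track of indices so as to land on the asymmetric conclusion --- $A(\Lambda)$ is \emph{isomorphic to} a finite-index subgroup of $A(\gam)$ --- rather than mere abstract commensurability.

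The step I expect to be the main obstacle is the pattern-rigidity at the top of $(1)\Rightarrow(3)$: showing that an arbitrary quasi-isometry $A(\gam)\to A(\Lambda)$ coarsely preserves the maximal standard product regions. Its mirror --- reconstructing the decorated cube complex from $\gam^e$ in $(3)\Rightarrow(2)$ --- is of comparable difficulty and uses the same rigidity input. Once these two geometric facts are available, the remaining passages (translating product-region combinatorics into $\gam^e$, and producing a finite-index subgroup from a special action) are routine given the machinery assembled earlier in this survey together with the standard theory of special cube complexes.
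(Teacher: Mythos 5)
The paper does not prove this theorem; it is stated as a citation to Huang~\cite{Huang17}, so there is no in-text argument to compare against. Your sketch should therefore be assessed on its own terms, as a reconstruction of Huang's strategy.

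As a roadmap it is broadly faithful to what Huang actually does, and you have correctly located the hypothesis in the combinatorics: finiteness of $\Out(A(\gam))$ is exactly the conjunction ``no separating vertex stars and no domination,'' and it is this rigidity that ultimately forces a quasi--isometry to respect the pattern of standard product regions in the universal cover of the Salvetti complex, in the spirit of~\cite{BKS2008,BehrNeu08}. You are also right to flag the $P_4$/$P_5$ and Example~5.22 of~\cite{Huang2016} phenomena as showing that the hypothesis is necessary for both $(1)\Rightarrow(3)$ and $(3)\Rightarrow(2)$.

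That said, what you have written is a strategy memo rather than a proof, and you say so yourself: the pattern--rigidity step in $(1)\Rightarrow(3)$ and the reconstruction of the cube complex from $\gam^e$ in $(3)\Rightarrow(2)$ are acknowledged as the hard parts and left entirely open. Two further points deserve more care than your sketch gives them. First, the hypothesis is asymmetric --- only $\Out(A(\gam))$ is assumed finite, nothing is assumed about $\Lambda$ --- so before pattern rigidity on the $\Lambda$ side can be invoked, one must either extract the corresponding combinatorial rigidity of $\Lambda$ from the quasi--isometry (in $(1)\Rightarrow(3)$) or from $\gam^e\cong\Lambda^e$ (in $(3)\Rightarrow(2)$); your sketch tacitly assumes this is free. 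Second, the identification ``coarse classes of standard lines $=$ vertices of $\gam^e$, spanning a flat $=$ adjacency'' is stated as if it were definitional, but the parallelism classes of standard geodesics in $X_\gam$ are indexed by cosets of stabilizers of axes, not directly by conjugates of vertex generators, and one must check these coincide. Neither objection is fatal --- both are resolved in Huang's paper --- but a reader of your sketch would have no way to know whether you could fill them in, and the bulk of the mathematical content of the theorem resides precisely there.
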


Thus, in the case of finite groups of outer automorphisms, quasi-isometry, commensurability, and isomorphism of extension graphs
are equivalent conditions to place on a right-angled Artin group. Here again, the analogy with mapping class groups persists.
If two mapping class groups of surfaces are quasi-isometric, then except for some sporadic cases, the resulting mapping class
groups are in fact isomorphic to each other~\cite{BKMM2012}. Thus again excluding some sporadic cases,
quasi-isometry, commensurability, and isomorphism of
mapping class groups are equivalent. Finally, aside from some sporadic cases, isomorphism of curve graphs is equivalent to
isomorphism of mapping class groups~\cite{Shackleton07}.

\section{Further directions}

Much remains to be understood in the relationship between combinatorics and algebra via the lens of right-angled Artin groups.
As the reader has certainly come to understand, it is not just some property of groups that one seeks to analogize a property of graphs;
one wants it to be a clean and natural statement about groups that reflects the particular flavor of the property in question.
Therefore, it is not likely one could produce a satisfactory omnibus result,
since some subjective notions of beauty and philosophical considerations enter into the picture.

With these musings, we close by giving some particular open questions of interest. Some are well-known open problems, and we make
no claim to having been the first to pose them.

\begin{que}
What is the full quasi-isometric classification of right-angled Artin groups? What about the commensurability classification of
right-angled Artin groups? What sorts of combinatorial objects serve as complete invariants for these equivalence relations?
\end{que}

Some specific natural combinatorial properties we have not discussed are of interest in graph theory.

\begin{que}\label{que:planar}
What algebraic property of $A(\gam)$ is equivalent to the planarity of $\gam$?
\end{que}

Closely related to Question~\ref{que:planar} is the problem of determining whether a graph $\Lambda$ is a subdivision of a graph
$\gam$ by examining the relationship between the groups $A(\Lambda)$ and $A(\gam)$, which to the knowledge of the author is also open.

A graph is \emph{self-complementary}\index{self-complementary graph}
if it is isomorphic to its complement graph. A singleton vertex is self-complementary, as are the path $P_4$ of length
three and the cycle $C_5$ of length five. A question that is a particular favorite of the author is the following:

\begin{que}
What algebraic property of $A(\gam)$ is equivalent to the statement that $\gam$ is self-complementary?
\end{que}

Following the remarks in Subsection~\ref{ss:further} above and the results of~\cite{HermillerSunic}, we have the following.

\begin{que}
What is the relationship between the normal subgroup structure of $A(\gam)$ and the combinatorics of $\gam$?
\end{que}

Finally, we have the following more open-ended question.

\begin{que}\label{que:spectral}
Is there a synthesis between the ideas in Section~\ref{sec:comb} and algebraic graph theory? How can one formulate spectral graph
theory in terms of right-angled Artin groups?
\end{que}

Some of the discussion in this survey is a step towards an answer to Question~\ref{que:spectral}. For one, the Cheeger constant $c$
of a graph
can be viewed as a spectral invariant of a graph, as it controls the spectral gap of the the graph via the 
Cheeger inequality due to Dodziuk and Alon--Milman (see~\cite{KowalskiBook} for a detailed discussion):
if $\lambda_2$ is the second largest eigenvalue of a $d$--regular connected graph $\gam$
then \[\frac{1}{2}(d-\lambda_2)\leq c\leq \sqrt{2d(d-\lambda_2)}.\] The content
of Theorem~\ref{thm:expander} is that the Cheeger constant of a graph $\gam$ can be read off from the cohomology algebra of $A(\gam)$.
It is natural to ask how one might recover more information about the eigenvalues of the adjacency matrix of $\gam$ from the
group theory of $A(\gam)$.

We hope that this survey will encourage further investigations in these directions.

\section*{Acknowledgements}

The author is partially supported  by an Alfred P. Sloan Foundation Research Fellowship, by NSF Grant DMS-1711488,
and by NSF Grant DMS-2002596. The author thanks K.~Ohshika and A.~Papadopoulos for inviting him to write this survey, and to
R.~Flores for providing numerous invaluable comments on an earlier draft.


\bibliographystyle{amsplain}

\def\cprime{$'$} \def\soft#1{\leavevmode\setbox0=\hbox{h}\dimen7=\ht0\advance
  \dimen7 by-1ex\relax\if t#1\relax\rlap{\raise.6\dimen7
  \hbox{\kern.3ex\char'47}}#1\relax\else\if T#1\relax
  \rlap{\raise.5\dimen7\hbox{\kern1.3ex\char'47}}#1\relax \else\if
  d#1\relax\rlap{\raise.5\dimen7\hbox{\kern.9ex \char'47}}#1\relax\else\if
  D#1\relax\rlap{\raise.5\dimen7 \hbox{\kern1.4ex\char'47}}#1\relax\else\if
  l#1\relax \rlap{\raise.5\dimen7\hbox{\kern.4ex\char'47}}#1\relax \else\if
  L#1\relax\rlap{\raise.5\dimen7\hbox{\kern.7ex
  \char'47}}#1\relax\else\message{accent \string\soft \space #1 not
  defined!}#1\relax\fi\fi\fi\fi\fi\fi}
\providecommand{\bysame}{\leavevmode\hbox to3em{\hrulefill}\thinspace}
\providecommand{\MR}{\relax\ifhmode\unskip\space\fi MR }
\providecommand{\MRhref}[2]{%
  \href{http://www.ams.org/mathscinet-getitem?mr=#1}{#2}
}
\providecommand{\href}[2]{#2}
\begin{thebibliography}{100}

\bibitem{Agol2008}
I.~Agol, \emph{Criteria for virtual fibering}, J. Topol. \textbf{1} (2008),
  no.~2, 269--284. 

\bibitem{Agol2013}
\bysame, \emph{The virtual {H}aken conjecture}, Doc. Math. \textbf{18} (2013),
  1045--1087, With an appendix by I.~Agol, D.~Groves, and J.~Manning.

\bibitem{Alon86}
N.~Alon, \emph{Eigenvalues and expanders}, vol.~6, 1986, Theory of computing
  (Singer Island, Fla., 1984), pp.~83--96. 
  
\bibitem{ant-min}
Y.~Antol\'in and A.~Minasyan, \emph{Tits alternatives for graph products},
J. Reine Angew. Math. \textbf{704} (2015), 55--83.

\bibitem{AB09}
S.~Arora and B.~Barak, \emph{Computational complexity}, Cambridge
  University Press, Cambridge, 2009, A modern approach. 

\bibitem{babai-proceedings}
L.~Babai, \emph{Graph isomorphism in quasipolynomial time [extended
  abstract]}, S{TOC}'16---{P}roceedings of the 48th {A}nnual {ACM} {SIGACT}
  {S}ymposium on {T}heory of {C}omputing, ACM, New York, 2016, pp.~684--697.

\bibitem{BabaiMoran}
L.~Babai and S.~Moran, \emph{Arthur-{M}erlin games: a
  randomized proof system, and a hierarchy of complexity classes}, vol.~36,
  1988, 17th Annual ACM Symposium on the Theory of Computing (Providence, RI,
  1985), pp.~254--276. 

\bibitem{BKK2019JEMS}
H.~Baik, S.~Kim, and T.~Koberda, \emph{Unsmoothable group
  actions on compact one-manifolds}, J. Eur. Math. Soc. (JEMS) \textbf{21}
  (2019), no.~8, 2333--2353. 
  
\bibitem{baudisch}
A.~Baudisch, \emph{Subgroups of semifree groups},
Acta Math. Acad. Sci. Hungar. \textbf{38} (1981), no. 1-4, 19--28.

\bibitem{BJN2010}
J.~A. Behrstock, T.~Januszkiewicz, and W.~D. Neumann, \emph{Quasi-isometric
  classification of some high dimensional right-angled {A}rtin groups}, Groups
  Geom. Dyn. \textbf{4} (2010), no.~4, 681--692. 

\bibitem{BC2010}
J.~Behrstock and R.~Charney, \emph{Divergence and quasimorphisms of
  right-angled {A}rtin groups}, Math. Ann. \textbf{352} (2012), no.~2,
  339--356. 

\bibitem{HHS2}
J.~Behrstock, M.~F. Hagen, and A.~Sisto, \emph{Hierarchically
  hyperbolic spaces {II}: {C}ombination theorems and the distance formula},
  Pacific J. Math. \textbf{299} (2019), no.~2, 257--338. 

\bibitem{HHS1}
\bysame, \emph{Hierarchically
  hyperbolic spaces, {I}: {C}urve complexes for cubical groups}, Geom. Topol.
  \textbf{21} (2017), no.~3, 1731--1804. 

\bibitem{BKMM2012}
J.~Behrstock, B.~Kleiner, Y.~Minsky, and L.~Mosher, \emph{Geometry and
  rigidity of mapping class groups}, Geom. Topol. \textbf{16} (2012), no.~2,
  781--888. 

\bibitem{BehrNeu08}
Jason~A. Behrstock and Walter~D. Neumann, \emph{Quasi-isometric classification
  of graph manifold groups}, Duke Math. J. \textbf{141} (2008), no.~2,
  217--240. 

\bibitem{BW04-book}
L.~W. Beineke and R.~J. Wilson (eds.), \emph{Topics in algebraic graph
  theory}, Encyclopedia of Mathematics and its Applications, vol. 102,
  Cambridge University Press, Cambridge, 2004. 

\bibitem{BHW2011}
N.~Bergeron, F.~Haglund, and D.~T. Wise, \emph{Hyperplane sections in
  arithmetic hyperbolic manifolds}, J. Lond. Math. Soc. (2) \textbf{83} (2011),
  no.~2, 431--448. 

\bibitem{BKS2008}
M.~Bestvina, B.~Kleiner, and M.~Sageev, \emph{The asymptotic geometry of
  right-angled {A}rtin groups. {I}}, Geom. Topol. \textbf{12} (2008), no.~3,
  1653--1699. 

\bibitem{BestBrady97}
M.~Bestvina and N.~Brady, \emph{Morse theory and finiteness properties of
  groups}, Invent. Math. \textbf{129} (1997), no.~3, 445--470. 

\bibitem{BLM1983}
J.~S. Birman, A.~Lubotzky, and J.~McCarthy, \emph{Abelian and solvable
  subgroups of the mapping class groups}, Duke Math. J. \textbf{50} (1983),
  no.~4, 1107--1120. 

\bibitem{Blum87}
M.~Blum, \emph{How to prove a theorem so no one else can claim it},
  Proceedings of the {I}nternational {C}ongress of {M}athematicians, {V}ol. 1,
  2 ({B}erkeley, {C}alif., 1986), Amer. Math. Soc., Providence, RI, 1987,
  pp.~1444--1451. 

\bibitem{Bourg09}
J.~Bourgain, \emph{Expanders and dimensional expansion}, C. R. Math. Acad.
  Sci. Paris \textbf{347} (2009), no.~7-8, 357--362. 

\bibitem{BY13}
J.~Bourgain and A.~Yehudayoff, \emph{Expansion in {${\rm SL}_2(\Bbb{R})$}
  and monotone expanders}, Geom. Funct. Anal. \textbf{23} (2013), no.~1, 1--41.

\bibitem{Bowditch2008}
B.~H. Bowditch, \emph{Tight geodesics in the curve complex}, Invent. Math.
  \textbf{171} (2008), no.~2, 281--300. 

\bibitem{BradMei01}
N.~Brady and J.~Meier, \emph{Connectivity at infinity for right angled
  {A}rtin groups}, Trans. Amer. Math. Soc. \textbf{353} (2001), no.~1,
  117--132. 

\bibitem{BrenMar04}
T.~E. Brendle and D.~Margalit, \emph{Commensurations of the {J}ohnson
  kernel}, Geom. Topol. \textbf{8} (2004), 1361--1384. 

\bibitem{Brid-MRL}
M.~Bridson,
\emph{On the subgroups of right-angled Artin groups and mapping class groups},
Math. Res. Lett. \textbf{20} (2013), no. 2, 203--212.

\bibitem{BH1999}
M.~R. Bridson and A.~Haefliger, \emph{Metric spaces of non-positive curvature},
  Grundlehren der Mathematischen Wissenschaften [Fundamental Principles of
  Mathematical Sciences], vol. 319, Springer-Verlag, Berlin, 1999.

\bibitem{brown-book82}
K.~S. Brown, \emph{Cohomology of groups}, Graduate Texts in Mathematics,
  vol.~87, Springer-Verlag, New York-Berlin, 1982. 

\bibitem{cartier-foata}
P.~Cartier and D.~Foata, \emph{Probl\`emes combinatoires de commutation et
  r\'{e}arrangements}, Lecture Notes in Mathematics, No. 85, Springer-Verlag,
  Berlin-New York, 1969. 

\bibitem{Casals16-agt}
M.~Casals-Ruiz, \emph{Embeddability and quasi-isometric classification
  of partially commutative groups}, Algebr. Geom. Topol. \textbf{16} (2016),
  no.~1, 597--620. 

\bibitem{CDK2013}
M.~Casals-Ruiz, A.~Duncan, and I.~Kazachkov, \emph{Embedddings
  between partially commutative groups: two counterexamples}, J. Algebra
  \textbf{390} (2013), 87--99. 

\bibitem{CKZ19}
M.~Casals-Ruiz, I.~Kazachkov, and A.~Zakharov, \emph{On
  commensurability of right-angled {A}rtin groups {I}: {RAAG}s defined by trees
  of diameter 4}, Rev. Mat. Iberoam. \textbf{35} (2019), no.~2, 521--560.

\bibitem{CLG}
D.~X. Charles, K.~E. Lauter, and E.~Z. Goren, \emph{Cryptographic
  hash functions from expander graphs}, J. Cryptology \textbf{22} (2009),
  no.~1, 93--113. 

\bibitem{Charney2007}
R.~Charney, \emph{An introduction to right-angled {A}rtin groups}, Geom.
  Dedicata \textbf{125} (2007), 141--158.

\bibitem{charney-farber}
R.~Charney and M.~Farber, \emph{Random groups arising as graph
  products}, Algebr. Geom. Topol. \textbf{12} (2012), no.~2, 979--995.
  
 \bibitem{CGW09}
 J.~Crisp, E.~Godelle, and B.~Wiest, \emph{The conjugacy problem in subgroups of right-angled Artin groups},
J. Topol. \textbf{2} (2009), no. 3, 442--460.

\bibitem{CW2004}
J.~Crisp and B.~Wiest, \emph{Embeddings of graph braid and surface groups
  in right-angled {A}rtin groups and braid groups}, Algebr. Geom. Topol.
  \textbf{4} (2004), 439--472. 

\bibitem{Davis98}
M.~W. Davis, \emph{The cohomology of a {C}oxeter group with group ring
  coefficients}, Duke Math. J. \textbf{91} (1998), no.~2, 297--314.

\bibitem{dlHarpe2000}
P.~de~la Harpe, \emph{Topics in geometric group theory}, Chicago Lectures
  in Mathematics, University of Chicago Press, Chicago, IL, 2000. 
  
\bibitem{drv13}
A.~D\'iaz, A.~Ruiz, and A.~Viruel, \emph{Cohomological uniqueness of some p-groups}
Proc. Edinb. Math. Soc. \textbf{2} 56 (2013), no. 2, 449--468.

\bibitem{diestel-book}
R.~Diestel, \emph{Graph theory}, fifth ed., Graduate Texts in
  Mathematics, vol. 173, Springer, Berlin, 2017. 

\bibitem{Droms87}
C.~Droms, \emph{Isomorphisms of graph groups}, Proc. Amer. Math. Soc.
  \textbf{100} (1987), no.~3, 407--408. 

\bibitem{Droms1987-sub}
\bysame, \emph{Subgroups of graph groups}, J. Algebra \textbf{110} (1987),
  no.~2, 519--522. 
  
\bibitem{duch-krob}
G.~Duchamp and D.~Krob, \emph{The lower central series of the free partially commutative group},
Semigroup Forum \textbf{45} (1992), no. 3, 385--394.

\bibitem{FM2012}
B.~Farb and D.~Margalit, \emph{A primer on mapping class groups}, Princeton
  Mathematical Series, vol.~49, Princeton University Press, Princeton, NJ,
  2012. 

\bibitem{FKR12}
B.~Fine, D.~Kahrobaei, and G.~Rosenberger (eds.),
  \emph{Computational and combinatorial group theory and cryptography},
  Contemporary Mathematics, vol. 582, American Mathematical Society,
  Providence, RI, 2012. 
  
\bibitem{FK2016}
R.~Flores and D.~Kahrobaei, \emph{Cryptography with right-angled Artin groups},
preprint, arXiv:1610.06495.

\bibitem{FKK2020a}
R.~Flores, D.~Kahrobaei, and T.~Koberda, \emph{{An algebraic
  characterization of $k$--colorability}},  Proc. Amer. Math. Soc. \textbf{149} (2021), no. 5, 2249--2255.

\bibitem{FKK2019}
\bysame, \emph{Algorithmic problems in right-angled {A}rtin groups: complexity
  and applications}, J. Algebra \textbf{519} (2019), 111--129. 

\bibitem{FKK2020-pub}
\bysame, \emph{A cryptographic application of the {T}hurston norm}, Int. J.
  Comput. Math. Comput. Syst. Theory \textbf{5} (2020), no.~1, 15--24.

\bibitem{FKK2020exp}
\bysame, \emph{Expanders and right-angled {A}rtin groups}, Preprint (2020).

\bibitem{FKK2021ham}
\bysame, \emph{Hamiltonicity via cohomology of right-angled {A}rtin groups},
  Preprint (2021).

\bibitem{GJ1979}
M.~R. Garey and D.~S. Johnson, \emph{Computers and intractability}, W.
  H. Freeman and Co., San Francisco, Calif., 1979, A guide to the theory of
  NP-completeness, A Series of Books in the Mathematical Sciences. 

\bibitem{GdlH88}
\'{E}. Ghys and P.~de~la Harpe (eds.), \emph{Sur les groupes hyperboliques
  d'apr\`es {M}ikhael {G}romov}, Progress in Mathematics, vol.~83,
  Birkh\"{a}user Boston, Inc., Boston, MA, 1990, Papers from the Swiss Seminar
  on Hyperbolic Groups held in Bern, 1988. 

\bibitem{GR2001-book}
C.~Godsil and G.~Royle, \emph{Algebraic graph theory}, Graduate Texts in
  Mathematics, vol. 207, Springer-Verlag, New York, 2001. 

\bibitem{GILVZ}
O.~Goldreich, R.~Impagliazzo, L.~Levin, R.~Venkatesan, and
  D.~Zuckerman, \emph{Security preserving amplification of hardness}, 31st
  {A}nnual {S}ymposium on {F}oundations of {C}omputer {S}cience, {V}ol. {I},
  {II} ({S}t. {L}ouis, {MO}, 1990), IEEE Comput. Soc. Press, Los Alamitos, CA,
  1990, pp.~318--326. 

\bibitem{GMW91}
O.~Goldreich, S.~Micali, and A.~Wigderson, \emph{Proofs that yield
  nothing but their validity, or {A}ll languages in {NP} have zero-knowledge
  proof systems}, J. Assoc. Comput. Mach. \textbf{38} (1991), no.~3, 691--729.

\bibitem{Gromov1987}
M.~Gromov, \emph{Hyperbolic groups}, Essays in group theory, Math. Sci. Res.
  Inst. Publ., vol.~8, Springer, New York, 1987, pp.~75--263.

\bibitem{gromov99}
\bysame, \emph{Metric structures for {R}iemannian and non-{R}iemannian
  spaces}, Progress in Mathematics, vol. 152, Birkh\"{a}user Boston, Inc.,
  Boston, MA, 1999, Based on the 1981 French original [ MR0682063 (85e:53051)],
  With appendices by M. Katz, P. Pansu and S. Semmes, Translated from the
  French by Sean Michael Bates. 

\bibitem{Hagen14}
M.~F. Hagen, \emph{Weak hyperbolicity of cube complexes and quasi-arboreal
  groups}, J. Topol. \textbf{7} (2014), no.~2, 385--418. 
 
\bibitem{Hatcher}
A.~Hatcher, \emph{Algebraic topology}, Cambridge University Press, Cambridge, 2002.

\bibitem{hm1995}
S.~Hermiller and J.~Meier, \emph{Algorithms and geometry for graph
  products of groups}, J. Algebra \textbf{171} (1995), no.~1, 230--257.
 
\bibitem{HermillerSunic}
S.~Hermiller and Z.~\v{S}uni\'c, \emph{Poly-free constructions for right-angled Artin groups},
J. Group Theory \textbf{10} (2007), no. 1, 117--138.

\bibitem{HLWBAMS}
S.~Hoory, N.~Linial, and A.~Wigderson, \emph{Expander graphs and their
  applications}, Bull. Amer. Math. Soc. (N.S.) \textbf{43} (2006), no.~4,
  439--561. 

\bibitem{Huang2016}
J.~Huang, \emph{Quasi-isometry classification of right-angled artin groups ii:
  several infinite out cases}.

\bibitem{Huang17}
\bysame, \emph{Quasi-isometric classification of right-angled {A}rtin
  groups {I}: the finite out case}, Geom. Topol. \textbf{21} (2017), no.~6,
  3467--3537. 

\bibitem{Huang18}
\bysame, \emph{Commensurability of groups quasi-isometric to {RAAG}s}, Invent.
  Math. \textbf{213} (2018), no.~3, 1179--1247. 

\bibitem{HuangKlein18}
J.~Huang and B.~Kleiner, \emph{Groups quasi-isometric to right-angled
  {A}rtin groups}, Duke Math. J. \textbf{167} (2018), no.~3, 537--602.

\bibitem{ivanov}
N.~V. Ivanov, \emph{Automorphism of complexes of curves and of
  {T}eichm\"uller spaces}, Internat. Math. Res. Notices (1997), no.~14,
  651--666. 

\bibitem{MJ05}
C.~Jensen and J.~Meier, \emph{The cohomology of right-angled {A}rtin
  groups with group ring coefficients}, Bull. London Math. Soc. \textbf{37}
  (2005), no.~5, 711--718. 

\bibitem{cograph3}
H.~A. Jung, \emph{On a class of posets and the corresponding comparability
  graphs}, J. Combinatorial Theory Ser. B \textbf{24} (1978), no.~2, 125--133.

\bibitem{KahShp15}
D.~Kahrobaei and V.~Shpilrain (eds.), \emph{Algorithmic problems of
  group theory, their complexity, and applications to cryptography},
  Contemporary Mathematics, vol. 633, American Mathematical Society,
  Providence, RI, 2015. 

\bibitem{Kambites2009}
M.~Kambites, \emph{On commuting elements and embeddings of graph groups and
  monoids}, Proc. Edinb. Math. Soc. (2) \textbf{52} (2009), no.~1, 155--170.
  
\bibitem{Kim2008}
S.~Kim, \emph{Co-contractions of graphs and right-angled {A}rtin
  groups}, Algebr. Geom. Topol. \textbf{8} (2008), no.~2, 849--868. 

\bibitem{KK2013}
S.~Kim and T.~Koberda, \emph{Embedability between right-angled {A}rtin groups},
  Geom. Topol. \textbf{17} (2013), no.~1, 493--530. 

\bibitem{KK2013b}
\bysame, \emph{The geometry of the curve graph of a right-angled {A}rtin
  group}, Internat. J. Algebra Comput. \textbf{24} (2014), no.~2, 121--169.

\bibitem{KK2014IMRN}
\bysame, \emph{An obstruction to embedding right-angled {A}rtin groups in
  mapping class groups}, Int. Math. Res. Not. IMRN (2014), no.~14, 3912--3918.

\bibitem{KK2015GT}
\bysame, \emph{Anti-trees and right-angled {A}rtin subgroups of braid groups},
  Geom. Topol. \textbf{19} (2015), no.~6, 3289--3306. 

\bibitem{KKOsaka}
\bysame, \emph{Right-angled {A}rtin groups and finite
  subgraphs of curve graphs}, Osaka J. Math. \textbf{53} (2016), no.~3,
  705--716. 

\bibitem{KK2018JT}
\bysame, \emph{Free products and the algebraic
  structure of diffeomorphism groups}, J. Topol. \textbf{11} (2018), no.~4,
  1054--1076. 

\bibitem{KoLee00}
K.~H. Ko, S.~J. Lee, J.~H. Cheon, J.~W. Han, J.~Kang, and
  C.~Park, \emph{New public-key cryptosystem using braid groups},
  Advances in cryptology---{CRYPTO} 2000 ({S}anta {B}arbara, {CA}), Lecture
  Notes in Comput. Sci., vol. 1880, Springer, Berlin, 2000, pp.~166--183.

\bibitem{Koberda2012}
T.~Koberda, \emph{Right-angled {A}rtin groups and a generalized isomorphism
  problem for finitely generated subgroups of mapping class groups}, Geom.
  Funct. Anal. \textbf{22} (2012), no.~6, 1541--1590. 

\bibitem{Koberda-whatis}
\bysame, \emph{What is...an acylindrical group action?}, Notices Amer.
  Math. Soc. \textbf{65} (2018), no.~1, 31--34. 
  
\bibitem{Koberda-survey}
\bysame, \emph{Actions of right-angled Artin groups in low dimensions}, Handbook of Group Actions (Vol. V) ALM 48, Ch. 6, pp. 227--260.

\bibitem{KMT2014}
T.~Koberda, J.~Mangahas, and S.~J. Taylor, \emph{The geometry of
  purely loxodromic subgroups of right-angled {A}rtin groups}, Trans. Amer.
  Math. Soc. \textbf{369} (2017), no.~11, 8179--8208. 

\bibitem{KowalskiBook}
E.~Kowalski, \emph{An introduction to expander graphs}, Cours
  Sp\'{e}cialis\'{e}s [Specialized Courses], vol.~26, Soci\'{e}t\'{e}
  Math\'{e}matique de France, Paris, 2019. 

\bibitem{Laurence95}
M.~R. Laurence, \emph{A generating set for the automorphism group of a
  graph group}, J. London Math. Soc. (2) \textbf{52} (1995), no.~2, 318--334.

\bibitem{LeinMar06}
C.~J. Leininger and D.~Margalit, \emph{Abstract commensurators of
  braid groups}, J. Algebra \textbf{299} (2006), no.~2, 447--455. 

\bibitem{LubBook}
A.~Lubotzky, \emph{Discrete groups, expanding graphs and invariant
  measures}, Modern Birkh\"{a}user Classics, Birkh\"{a}user Verlag, Basel,
  2010, With an appendix by J.~D. Rogawski, Reprint of the 1994 edition.

\bibitem{LPS88}
A.~Lubotzky, R.~Phillips, and P.~Sarnak, \emph{Ramanujan graphs},
  Combinatorica \textbf{8} (1988), no.~3, 261--277. 

\bibitem{LZ08}
A.~Lubotzky and E.~Zelmanov, \emph{Dimension expanders}, J. Algebra
  \textbf{319} (2008), no.~2, 730--738. 

\bibitem{Margolis20}
A.~Margolis, \emph{Quasi-isometry classification of right-angled {A}rtin
  groups that split over cyclic subgroups}, Groups Geom. Dyn. \textbf{14}
  (2020), no.~4, 1351--1417.

\bibitem{MM1999}
H.~A. Masur and Y.~N. Minsky, \emph{Geometry of the complex of curves.
  {I}. {H}yperbolicity}, Invent. Math. \textbf{138} (1999), no.~1, 103--149.

\bibitem{miha1958}
K.~A. Miha\u{\i}lova, \emph{The occurrence problem for direct products of
  groups}, Dokl. Akad. Nauk SSSR \textbf{119} (1958), 1103--1105. 

\bibitem{Minsky67}
M.~L. Minsky, \emph{Computation: finite and infinite machines},
  Prentice-Hall, Inc., Englewood Cliffs, N.J., 1967, Prentice-Hall Series in
  Automatic Computation. 

\bibitem{MSU08}
A.~Myasnikov, Vladimir Shpilrain, and Alexander Ushakov, \emph{Group-based
  cryptography}, Advanced Courses in Mathematics. CRM Barcelona, Birkh\"{a}user
  Verlag, Basel, 2008. 

\bibitem{FLP1991}
S{\'e}minaire Orsay, \emph{Travaux de {T}hurston sur les surfaces}, Soci\'et\'e
  Math\'ematique de France, 1991, Reprint of {\it Travaux de Thurston sur les
  surfaces}, Soc. Math. France, Paris, 1979 [ MR0568308 (82m:57003)],
  Ast{\'e}risque No. 66-67 (1991). 

\bibitem{rosen-book}
A.~Rosen, \emph{Concurrent zero-knowledge}, Information Security and
  Cryptography, Springer-Verlag, Berlin, 2006, With additional background and a
  foreword by Oded Goldreich. 
  
\bibitem{rv13}
A.~Ruiz and A.~Viruel, \emph{Cohomological uniqueness, Massey products and the modular isomorphism problem
for $2$-groups of maximal nilpotency class},
Trans. Amer. Math. Soc. \textbf{365} (2013), no. 7, 3729--3751.

\bibitem{Sabalka09}
Lucas Sabalka, \emph{On rigidity and the isomorphism problem for tree braid
  groups}, Groups Geom. Dyn. \textbf{3} (2009), no.~3, 469--523. 

\bibitem{cograph2}
D.~Seinsche, \emph{On a property of the class of {$n$}-colorable graphs}, J.
  Combinatorial Theory Ser. B \textbf{16} (1974), 191--193. 

\bibitem{Sela-inv97}
Z.~Sela, \emph{Acylindrical accessibility for groups}, Invent. Math.
  \textbf{129} (1997), no.~3, 527--565. 

\bibitem{Serre1977}
J.-P. Serre, \emph{Arbres, amalgames, {${\rm SL}_{2}$}}, Soci\'et\'e
  Math\'ematique de France, Paris, 1977, Avec un sommaire anglais, R\'edig\'e
  avec la collaboration de Hyman Bass, Ast\'erisque, No. 46. 

\bibitem{Servatius1989}
H.~Servatius, \emph{Automorphisms of graph groups}, J. Algebra \textbf{126}
  (1989), no.~1, 34--60. 

\bibitem{SDS1989}
H.~Servatius, C.~Droms, and B.~Servatius, \emph{Surface subgroups of graph
  groups}, Proc. Amer. Math. Soc. \textbf{106} (1989), no.~3, 573--578.

\bibitem{Shackleton07}
K.~J. Shackleton, \emph{Combinatorial rigidity in curve complexes and
  mapping class groups}, Pacific J. Math. \textbf{230} (2007), no.~1, 217--232.

\bibitem{Stallings-book}
J.~Stallings, \emph{Group theory and three-dimensional manifolds}, Yale
  University Press, New Haven, Conn.-London, 1971, A James K. Whittemore
  Lecture in Mathematics given at Yale University, 1969, Yale Mathematical
  Monographs, 4. 

\bibitem{Stallings-ends}
\bysame, \emph{On torsion-free groups with infinitely many ends},
  Ann. of Math. (2) \textbf{88} (1968), 312--334. 

\bibitem{cograph1}
D.~P. Sumner, \emph{Dacey graphs}, J Austral. Math. Soc. \textbf{18} (1974),
  492--502. 

\bibitem{Thurston-bull88}
W.~P. Thurston, \emph{On the geometry and dynamics of diffeomorphisms of
  surfaces}, Bull. Amer. Math. Soc. (N.S.) \textbf{19} (1988), no.~2, 417--431.

\bibitem{Toinet13}
E.~Toinet, \emph{Conjugacy {$p$}-separability of right-angled {A}rtin
  groups and applications}, Groups Geom. Dyn. \textbf{7} (2013), no.~3,
  751--790. 

\bibitem{vanwyk94}
L.~VanWyk, \emph{Graph groups are biautomatic}, J. Pure Appl. Algebra
  \textbf{94} (1994), no.~3, 341--352.
  
\bibitem{viruel98}
A.~Viruel, \emph{Homotopy uniqueness of $B\mathrm{G}_2$},
Manuscripta Math. \textbf{95} (1998), no. 4, 471--497.

\bibitem{viruel01}
A.~Viruel, \emph{Mod $3$ homotopy uniqueness of $BF_4$},
J. Math. Kyoto Univ. \textbf{41} (2001), no. 4, 769--793.

\bibitem{Wise2009}
D.~T. Wise, \emph{Research announcement: the structure of groups with a
  quasiconvex hierarchy}, Electron. Res. Announc. Math. Sci. \textbf{16}
  (2009), 44--55. 

\bibitem{Wise2011}
\bysame, \emph{The structure of groups with a quasiconvex hierarchy}, 2011.

\bibitem{Wise2012}
\bysame, \emph{From riches to raags: 3-manifolds, right-angled {A}rtin groups,
  and cubical geometry}, CBMS Regional Conference Series in Mathematics, vol.
  117, Published for the Conference Board of the Mathematical Sciences,
  Washington, DC, 2012. 

\bibitem{wrathall88}
C.~Wrathall, \emph{The word problem for free partially commutative groups}, J.
  Symbolic Comput. \textbf{6} (1988), no.~1, 99--104. 

\end{thebibliography}

\end{document}